\newcommand{\R}{{\mathbb R}}
\newcommand{\Lc}{{\mathfrak L}}
\DeclareMathOperator{\mes}{mes}
\DeclareMathOperator{\supp}{supp}
\DeclareMathOperator{\diam}{diam}
\newcommand*{\charac}[1]{\chi_{_{\scriptstyle#1}}}
\newcommand{\ds}{\,ds}
\newcommand{\dt}{\,dt}
\newcommand{\parameter}{\,\cdot\,}
\newcommand*{\Curly}[1]{\mathinner{\mathopen\{#1\mathclose\}}}
\newcommand*{\Norm}[1]{\mathinner{\mathopen\Vert#1\mathclose\Vert}}
\newcommand*{\Abs}[1]{\mathinner{\mathopen\vert#1\mathclose\vert}}
\newcommand{\iref}{\eqref}
\numberwithin{equation}{section}
\newtheorem{theorem}{Theorem}[section]
\newtheorem{lemma}{Lemma}[section]
\newtheorem{proposition}{Proposition}[section]
\newtheorem{corollary}{Corollary}[section]
\theoremstyle{definition}
\newtheorem{definition}{Definition}[section]
\newtheorem{remark}{Remark}[section]
\newtheorem{example}{Example}[section]
\providecommand{\bysame}{\leavevmode\hbox to3em{\hrulefill}\thinspace}
\newcommand{\biblanguage}[1]{\textrm{\upshape(in #1)}}
\renewcommand{\subjclass}[2][1991]{\def\@subjclass{#2}\def\subjclassname
{\textup{#1} Mathematics Subject Classification}}
\begin{document}

\title{Degeneracy Results for Fully Nonlinear Integral Operators}

\dedicatory{In memory of W.\,A.\,J. Luxemburg and P.\,P.\ Zabre\u{\i}ko}

\author[M.~V\"{a}th]{Martin V\"{a}th}
\address{Martin V\"{a}th\\
Google Switzerland\\
Brandschenkestrasse 110\\
CH-8002 Zürich\\
Switzerland}
\email{martin@mvath.de}

\subjclass[2010]{primary
47H30, 
secondary
46E30, 
46E40, 
47G10, 
47H09, 
34C15.}

\keywords{fully nonlinear integral operator,
nonstandard Volterra operator, nonstandard integral operator,
ideal space, superposition operator, degeneracy result,
non-differentiable map, local Lipschitz condition, global Lipschitz condition,
growth condition, Darbo condition, compact operator, Kuramoto oscillator}

\begin{abstract}
It is shown that integral operators of the fully nonlinear type
$K(x)(t)=\int_\Omega k(t,s,x(t),x(s))\ds$ exhibit similar degeneracy phenomena
in a large class of spaces as superposition operators $F(x)(t)=f(t,x(t))$.
In particular, $K$ is Fr\'{e}chet differentiable in $L_p$ only if it is
affine with respect to the ``$x(t)$'' argument. Similar degeneracy results
hold if $K$ satisfies a local Lipschitz or compactness condition.
Also vector functions, infinite measure spaces, and a much richer class
of function spaces than only $L_p$ are considered.
As a side result, degeneracy assertions for superposition operators
are obtained in this more general setting, complementing the known results
for scalar functions. As a particular example, it is shown that the operators
arising in continuous limits of coupled Kuramoto oscillators fail everywhere
to be Fr\'{e}chet differentiability or locally compact.
\end{abstract}

\maketitle

\section{Introduction}\label{s:intro}

In several recent publications, there occur integral operators of the type
\begin{equation}\label{e:fully}
K(x)(t)=\int_\Omega k(t,s,x(t),x(s))\ds\quad(t\in\Omega)
\end{equation}
($\Omega$ being some positive measure space), which are sometimes called
\emph{nonstandard integral operators}, see e.g.~\cite{GuanVolterra}.
These operators occur in rather different contexts,
for instance as a continuous limit for
coupled Kuramoto oscillators~\cite{AbramsStrogatz,Girnyk,Medvedev1,Medvedev2},
or in the modeling of American stock options in financial
mathematics, see e.g.~\cite{PeskirEconomy,KimEconomy}.

Note that the special ``bilinear'' case of~\eqref{e:fully} occurs in
so-called quadratic integral equations
\[x(t)=x(t)\int_\Omega\widetilde k(t,s)x(s)\ds+f(t)\quad(t\in\Omega)\text,\]
but the general form~\eqref{e:fully} allows e.g.\ even for arbitrary powers
in the arguments, so that the operators are actually ``fully nonlinear'',
which the author considers a better name for these operators than
``nonstandard'', because there is really nothing ``nonstandard''
about these operators (in particular, there is no special relation
to nonstandard analysis).

It is tempting to consider these fully nonlinear integral
operators~\eqref{e:fully} as ``straightforward'' generalizations of
Urysohn operators
\begin{equation}\label{e:Kone}
K_1(x)(t)=\int_\Omega k_1(t,s,x(s))\ds\quad(t\in\Omega)
\end{equation}
and to attempt similar approaches to solve equations with $K$ as for equations
with $K_1$. In particular, one might conjecture that under reasonable
regularity conditions for $k$ the operator $K$ is differentiable and compact
or at least satisfies a local (though usually not global) Lipschitz condition.
However, already for quadratic integral equations, it is known
that things are not that easy, and we will now show in this paper why such
straightforward attempts are usually doomed to fail.

In fact, one of the aims of this paper is to show that the operator which
arises as a continuous limit for coupled Kuramoto oscillators, though it
has a very regular kernel function, is neither Fr\'{e}chet differentiable
nor (locally) compact in any of the standard spaces $L_p(\Omega)$ with
$1<p<\infty$.

However, our results are much more general:
We will show that actually $K$ does not so much inherit
the ``nice'' properties of a Urysohn operator $K_1$
(like differentiability, local Lipschitz dependency, or compactness)
in many spaces under decent regularity conditions on $k_1$,
but actually that $K$ inherits much more the inconvenient properties
of superposition operators
\begin{equation}\label{e:Kzero}
K_0(x)(t)=k_0(t,x(t))\quad(t\in\Omega)\text.
\end{equation}
Recall that such superposition operators $K_0$ exhibit very unpleasant
degeneracy phenomena in many spaces. For instance, if $\Omega$ has not atoms
then $K_0$ fails to be (Fr\'{e}chet) differentiable in $L_p(\Omega)$
$(1\le p<\infty)$ unless there is a degeneracy in the sense that $K_0$
is an affine map. Similarly, a \emph{local} Lipschitz condition for $K_0$
readily implies a \emph{global} Lipschitz condition, even pointwise.
Moreover, even a local Lipschitz-type condition for
measures of noncompactness (``Darbo condition'')
implies that $K_0$ satisfies a Lipschitz condition with the same constant. In
particular, if $K_0$ is (locally) compact then $K_0$ is actually constant.

We will show analogous results for~\eqref{e:fully}, even for the generalized
fully nonlinear integral operator $F=K_0+K_1+K$, that is, we show additionally
that these degeneration phenomena cannot be ``canceled'' by just adding
superposition or Urysohn operators.

In contrast to the ``easy'' case of superposition operators,
we have to impose some additional regularity assumptions for our results,
and moreover, we have to relax the degeneration result concerning the
Darbo condition by obtaining only a global growth condition
(instead of a global Lipschitz condition).
However, we will show that all these assumptions and modificiation
are in particular sufficient to treat the operators arising from
Kuramoto oscillators mentioned above.

\section{Spaces and Notations}

Throughout, let $(E_1,\Abs\parameter)$, $(E_2,\Abs\parameter)$
be Banach spaces (not the trivial space $\Curly0$),
and $\Omega$ be a (positive) measure space (nontrivial: $\mes\Omega>0$).
We call a function $x\colon\Omega\to E_i$ measurable if it is (strongly)
Bochner measurable with respect to the Lebesgue extension of the measure space.
Let $(X,\Norm\parameter_X)$ and $(Y,\Norm\parameter_Y)$ be normed spaces,
consisting of (equivalence classes of) measurable functions
$x\colon\Omega\to E_1$ or $y\colon\Omega\to E_2$, respectively,
where we identify functions coinciding almost everywhere, as customary.

In particular, $\supp x\coloneqq\Curly{t\in\Omega:x(t)\ne0}$ is defined
up to a null set.

For measurable $D\subseteq\Omega$, we define the characteristic function
\[\charac D(t)\coloneqq\begin{cases}
1&\text{if $t\in D$,}\\
0&\text{if $t\notin D$,}
\end{cases}\]
and for a measurable function $x\colon\Omega\to E$ ($E$ some Banach space),
we define $P_Dx(t)=\charac D(t)x(t)$.

We do \emph{not} require that $P_D\colon X\to X$ (or $P_D\colon Y\to Y$).
As a substitute, we denote for $x\in X$ by $P_{D,x}$ the set of all
measurable functions $x_D\colon\Omega\to E_1$ satisfying $P_Dx_D=P_Dx$
(that is, $x_D|_D=x|_D$ almost everywhere). In particular, $P_Dx\in P_{D,x}$.

In general, $x\in X$ does not imply $P_Dx\in X$, but the
set $P_{D,x}\cap X$ is then trivially nonempty (because it contains $x$).

In~\cite{Appell} it was shown that the differentiability and Lipschitz
properties of superposition operators $F\colon X\to Y$ degenerate
if $(X,Y)$ is a so-called \emph{$V$-pair}.
At a first glance, the following technical
definition looks rather different than a $V$-pair as defined
in~\cite[Section~2.6]{Appell},
but we will see later that it is indeed a proper generalization.

\begin{definition}\label{d:Vpair}
Let $X,Y$ be spaces of measurable functions as introduced above.
We call $(X,Y)$ a \emph{weak $V$-pair}, if the following holds.

For every $x\in X$ and $y\in Y$ and every measurable subset $T\subseteq\Omega$
with $P_Ty\ne0$ there are sequences
$D_n\subseteq T$ (measurable) and $x_n\in P_{D_n,x}\cap X$ such that
$P_{D_n}y\ne0$ and
\begin{equation}\label{e:Vlimits}
\lim_{n\to\infty}\,\Norm{x_n}_X=0\quad\text{and}\quad
\liminf_{n\to\infty}\,\sup_{y_n\in P_{D_n,y}\cap Y}\,
\frac{\Norm{x_n}_X}{\Norm{y_n}_Y}<\infty\text.
\end{equation}
\end{definition}

To illustrate this abstract definition and to explain the relation with
$V$-pairs from~\cite[Section~2.6]{Appell}, we consider a special class
of spaces.

Recall that a normed space $(X,\Norm\parameter)$ of (classes of)
measurable functions $x\colon\Omega\to E_1$ is called \emph{preideal}
if for every $x\in X$ also every measurable function $y\colon\Omega\to E_1$
with $\Abs{y(t)}\le\Abs{x(t)}$ for almost all $t\in\Omega$ belongs
to $X$ and satisfies $\Norm y\le\Norm x$. (If $X$ is complete, it is called
an \emph{ideal} space.)

Ideal spaces are sometime also called Banach function spaces.
Foundation on their theory had been laid out independently by
W.\,A.\,J. Luxemburg and P.\,P.\ Zabrejko in a series of fundamental
papers~\cite{LuxemburgI,LuxemburgII,LuxemburgIII,LuxemburgIV,%
LuxemburgV,LuxemburgVI,LuxemburgVII,LuxemburgVIII,LuxemburgIX,LuxemburgX,%
LuxemburgXI,LuxemburgXII,LuxemburgXIII,LuxemburgXIVa,LuxemburgXIVb,%
LuxemburgXVa,LuxemburgXVb,LuxemburgXVIa,LuxemburgXVIb,ZabrejkoIdeal}
To each preideal space of vector
functions, one can associate its ``real form'' $X_{\R}$,
which is the preideal space
consisting of measurable real functions $x\colon\Omega\to\R$ such that
$x_e(t)=x(t)e$ (for some $e\in E_1$ with $\Abs e=1$) belongs to $X$,
and which satisfies $\Norm x_{X_{\R}}=\Norm{x_e}$. For example, the real form
of $L_p(\Omega,E_1)$ is $L_p(\Omega,\R)$.

The following property of a preideal space depends only on its real form:

\begin{definition}
We say that the preideal space $X$ is \emph{locally regular} if for each
$x\in X$, each set $T\subseteq\supp x$ of positive measure, and each
$\varepsilon>0$, there is a set $D\subseteq T$ of positive measure with
$\Norm{P_Dx}_X<\varepsilon$.
\end{definition}

Recall that a set $M\subseteq\Omega$ contains no atom of finite measure
if every $T\subseteq M$ with $0<\mes T<\infty$ contains a subset $D\subseteq T$
with $0<\mes D<\mes T$ (or, equivalently, $T$ can be divided into two sets
of equal measure).

\begin{proposition}\label{p:localreg}
Suppose that $\Omega$ contains no atoms of finite measure.
If a preideal space $X$ is regular, that is, for each $x\in X$ there holds
\[\lim_{\delta\to0}\,\sup_{\mes D<\delta}\,\,\Norm{P_Dx}=0\quad\text{and}\quad
\inf_{\mes D<\infty}\,\Norm{P_{\Omega\setminus D}x}=0\text,\]
then $X$ is locally regular. In particular, $X=L_p(\Omega,E_1)$ is
locally regular if $\Omega$ contains no atoms of finite measure and
$1\le p<\infty$.
\end{proposition}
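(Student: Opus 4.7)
The plan is, given $x\in X$, a set $T\subseteq\supp x$ with $\mes T>0$, and $\varepsilon>0$, to pick $\delta>0$ from the first regularity condition so that $\Norm{P_{D'}x}_X<\varepsilon$ whenever $\mes D'<\delta$, and then reduce the claim to the purely measure-theoretic task of producing a measurable $D\subseteq T$ with $0<\mes D<\delta$; the preideal property together with the choice of $\delta$ will then yield $\Norm{P_Dx}_X<\varepsilon$, as required for local regularity.

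First I would verify that $T$ contains a subset of positive \emph{finite} measure. Suppose not: then for every measurable $D_0\subseteq\Omega$ with $\mes D_0<\infty$ one would have $\mes(T\cap D_0)=0$, so $T\subseteq\Omega\setminus D_0$ up to a null set, and the preideal property would give $\Norm{P_Tx}_X\le\Norm{P_{\Omega\setminus D_0}x}_X$. Taking the infimum over such $D_0$ and invoking the second regularity identity $\inf_{\mes D_0<\infty}\Norm{P_{\Omega\setminus D_0}x}_X=0$ would force $\Norm{P_Tx}_X=0$, hence $P_Tx=0$ almost everywhere. This contradicts $T\subseteq\supp x$ together with $\mes T>0$.

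Given some $T_0\subseteq T$ with $0<\mes T_0<\infty$, I would then invoke the ``equivalently'' reformulation of the no-atoms assumption recalled just before the proposition: $T_0$ splits into two measurable subsets of equal measure $\mes T_0/2$. Iterating this bisection $n$ times produces a measurable $D\subseteq T_0\subseteq T$ with $\mes D=\mes T_0/2^n$, which lies below $\delta$ once $n$ is sufficiently large; this completes the local regularity assertion.

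For the concrete case $X=L_p(\Omega,E_1)$ with $1\le p<\infty$, I would verify regularity by two applications of dominated convergence to the scalar integrable function $\Abs{x(\parameter)}^p$: the first condition is the usual absolute continuity of its integral (so $\Norm{P_Dx}_p\to0$ as $\mes D\to0$), and for the second, the sets $A_n\coloneqq\Curly{t\in\Omega:\Abs{x(t)}>1/n}$ have finite measure by Chebyshev's inequality and exhaust $\supp x$, giving $\Norm{P_{\Omega\setminus A_n}x}_p\to0$. The only genuinely non-obvious point in the whole argument is the finite-measure reservoir provided by the first step; without the second regularity condition there would be nothing preventing $T$ from avoiding every finite-measure set, and the bisection argument would have nothing to start from.
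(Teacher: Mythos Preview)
Your proof is correct and uses the same two ingredients as the paper: the second regularity condition to secure a finite-measure piece to work with, and then the no-atoms hypothesis together with the first regularity condition to bisect down to a small set. The organizational difference is that the paper replaces your contradiction argument by a direct case split: it first chooses a finite-measure set $T_0\subseteq\Omega$ with $\Norm{P_{\Omega\setminus T_0}x}<\varepsilon$, and then either $D\coloneqq T\setminus T_0$ already has positive measure (in which case $\Norm{P_Dx}\le\Norm{P_{\Omega\setminus T_0}x}<\varepsilon$ and no bisection is needed at all), or else $T$ is essentially contained in $T_0$, hence has finite measure, and the bisection argument applies to $T$ itself. Your route is slightly less economical in that it always passes through bisection, but it makes the role of the second regularity condition very transparent, and your added verification of regularity for $L_p$ is a welcome detail that the paper leaves implicit.
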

\begin{proof}
Given $x\in X$, a set $T\subseteq\supp x$ of positive measure.
and $\varepsilon>0$, there exists a set $T_0\subseteq\Omega$ of finite
measure with $\Norm{P_{\Omega\setminus T_0}x}<\varepsilon$. Hence, if
$D_0\coloneqq T\setminus T_0$ has positive measure, $D=D_0$ has the
required property. Otherwise, $\mes T\le\mes T_0<\infty$. Since $T$ has
no atoms, there is a sequence $T\supseteq D_n\supseteq D_{n+1}$ with
$0<\mes D_n\le2^{-n}\mes T\to0$, and so $\Norm{P_{D_n}x}\to0$. Hence,
$D=D_n$ with sufficiently large $n$ has the required property.
\end{proof}

Even in case $\Omega=[0,1]$ the converse of Proposition~\ref{p:localreg} fails:

\begin{example}
Let $M$ be a Young function and $X=L_M([0,1],\R)$ be the corresponding
Orlicz space, see~\cite{KrasOrlicz}. Then $X$ is regular if and only if
$M$ satisfies the $\Delta_2$-condition, see
e.g.~\cite[Capter II, \S10, Section 6]{KrasOrlicz}.
However, $X$ is locally regular even if $M$ violates the $\Delta_2$-condition.

Indeed, if $T\subseteq\supp x$ has positive measure,
put $T_n=\Curly{t\in T:\Abs{x(t)}\le n}$. Since $\bigcup_nT_n=T$
has positive measure, there is some $N$ with $\mes T_N>0$.
Then $y=P_{T_N}x$ is bounded and thus has absolutely continuous norm,
see~\cite[Capter II, \S10, Section 6]{KrasOrlicz}. In particular, if
$D\subseteq T_N$ has sufficiently small measure, there holds
$\Norm{P_Dx}_X=\Norm{P_Dy}_X<\varepsilon$.
\end{example}

Now we can give a first illustration of Definition~\ref{d:Vpair}:

\begin{proposition}\label{p:simpleVpair}
Let $Y$ be a subspace of a locally regular preideal space $Z$.
Let $X$ be a preideal space with a bounded embedding $X_{\R}\subseteq Z_{\R}$.
Then $(X,Y)$ is a weak $V$-pair.
\end{proposition}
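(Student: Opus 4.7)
The plan is to verify Definition~\ref{d:Vpair} for arbitrary $x\in X$, $y\in Y$, and measurable $T\subseteq\Omega$ with $P_Ty\ne0$. Set $T'\coloneqq T\cap\supp y$, which has positive measure. The natural candidate for $x_n$ is the truncation $P_{D_n}x$, which belongs to $X$ by the preideal hypothesis and agrees with $x$ on $D_n$; the task is to choose $D_n\subseteq T'$ so that $\Abs x$ is pointwise dominated by a fixed multiple of $\Abs y$ on $D_n$ and $\Norm{P_{D_n}y}_Z\to 0$. I first dispose of the trivial case: if $T'\setminus\supp x$ has positive measure, pick any fixed $D\subseteq T'\setminus\supp x$ of positive measure and put $D_n\coloneqq D$, $x_n\coloneqq P_Dx=0$; then~\iref{e:Vlimits} holds vacuously, since the numerator is zero and each $y_n\in P_{D,y}\cap Y$ is nonzero.

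In the remaining case $T'\subseteq\supp x$ up to a null set, both $\Abs x$ and $\Abs y$ are strictly positive on $T'$, so the comparison sets $T_c\coloneqq\Curly{t\in T':\Abs{x(t)}\le c\Abs{y(t)}}$ form an increasing family exhausting $T'$, and one fixes $c_0>0$ large enough that $\mes T_{c_0}>0$. Since $y\in Y\subseteq Z$ and $T_{c_0}\subseteq\supp y$, local regularity of $Z$ produces measurable $D_n\subseteq T_{c_0}$ of positive measure with $\Norm{P_{D_n}y}_Z\to 0$; note $P_{D_n}y\ne 0$ because $D_n\subseteq\supp y$.

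Set $x_n\coloneqq P_{D_n}x$. The pointwise estimate $\Abs{x_n}=\Abs x\charac{D_n}\le c_0\Abs y\charac{D_n}=c_0\Abs{P_{D_n}y}$ holds on all of $\Omega$, and combining it with the preideal property of $X$ and the bounded embedding between the real forms of $X$ and $Z$ produces a constant $C$ with $\Norm{x_n}_X\le c_0 C\Norm{P_{D_n}y}_Z\to 0$, which is the first condition in~\iref{e:Vlimits}. For the ratio condition, each $y_n\in P_{D_n,y}\cap Y$ satisfies $P_{D_n}y_n=P_{D_n}y$, and since $Y$ inherits its norm from the preideal space $Z$ one has $\Norm{y_n}_Y=\Norm{y_n}_Z\ge\Norm{P_{D_n}y_n}_Z=\Norm{P_{D_n}y}_Z$; hence
\[\sup_{y_n\in P_{D_n,y}\cap Y}\frac{\Norm{x_n}_X}{\Norm{y_n}_Y}\le\frac{c_0 C\Norm{P_{D_n}y}_Z}{\Norm{P_{D_n}y}_Z}=c_0 C,\]
giving the second condition uniformly in $n$.

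The main technical obstacle is the first convergence $\Norm{x_n}_X\to 0$: $X$ itself is not assumed locally regular, so smallness of $\Norm{P_{D_n}x}_X$ cannot be extracted from a local regularity property of $X$ directly. Instead, one leverages the pointwise domination $\Abs x\le c_0\Abs y$ available on $T_{c_0}$, which together with the embedding converts the $Z$-norm smallness of $P_{D_n}y$ into an $X$-norm estimate on $x_n$. Restricting $D_n$ to $T_{c_0}$, rather than working on all of $T'$, is precisely what makes this transfer possible; that is also why the pointwise partition by the sets $T_c$ is introduced.
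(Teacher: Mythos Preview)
The proposal is correct and follows essentially the same approach as the paper: choose a subset of $T$ on which $\Abs x$ is pointwise dominated by a fixed multiple of $\Abs y$, apply local regularity of $Z$ to extract $D_n$ with $\Norm{P_{D_n}y}_Z\to0$, and take $x_n=P_{D_n}x$. Your separate treatment of the case $\mes(T'\setminus\supp x)>0$ is harmless but unnecessary, since the paper's sets $T_N=\Curly{t\in T:N\Abs{y(t)}>\Abs{x(t)}}$ (with strict inequality, so that automatically $T_N\subseteq\supp P_Ty$) already absorb that case uniformly.
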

\begin{proof}
Let $x\in X$, $y\in Y$, and $T\subseteq\Omega$ be measurable with $P_Ty\ne0$.
There is a natural number $N$ such that
\[T_N\coloneqq\Curly{t\in T:\text{$N\Abs{y(t)}>\Abs{x(t)}$}}\]
has positive measure, because otherwise $\bigcup_{N=1}^\infty T_N=\supp(P_Ty)$
would be a null set. Since $Z$ is locally regular, there is a sequence
$D_n\subseteq T_N$, $\mes D_n>0$, with $\Norm{P_{D_n}y}_Y\to0$.
The definition of $T_N$ implies that for every $y_n\in P_{D_n,y}$, there holds
\[\Abs{P_{D_n}x(t)}\le N\Abs{P_{D_n}y(t)}\le N\Abs{y_n(t)}\]
for almost all $t$. It follows that
\[\Norm{\,\Abs{P_{D_n}x}\,}_{Z_{\R}}\le
N\Norm{P_{D_n}y}_Y\le N\Norm{y_n}_Y\text.\]
Putting $x_n\coloneqq P_{D_n}x$, we thus find by the continuity of the
embedding $X_{\R}\subseteq Z_{\R}$ that $\Norm{x_n}_X\to0$ and that
$\Norm{x_n}_X/\Norm{y_n}_Y$ is bounded.
\end{proof}

We can relax the hypothesis about the embedding $X_{\R}\subseteq Z_{\R}$
in Proposition~\ref{p:simpleVpair} and even the hypothesis that $X$
is a preideal space. We will apply this technical extension in a moment:

\begin{proposition}\label{p:clumsyVpair}
Let $Y$ be a subspace of a locally regular preideal space $Z$.

Then $(X,Y)$ is a weak $V$-pair whenever $X$ is a normed space with the
following property.
For each $x\in X$, $y\in Y$ and each measurable $T\subseteq\Omega$
with $P_Ty\ne0$ there is a measurable $T_0\subseteq T$ with $P_{T_0}y\ne0$ and
a constant $C>0$ such that for every measurable $D\subseteq T_0$ of positive
measure there are a subset $D_0\subseteq D$ of positive measure and
$x_D\in P_{D_0,x}\cap X$ with
$\Norm{x_D}_X\le\Norm{\,\Abs{P_{D_0x}}\,}_{Z_{\R}}$.
\end{proposition}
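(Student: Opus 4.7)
The plan is to imitate the proof of Proposition~\ref{p:simpleVpair}, substituting its use of the continuous embedding $X_{\R}\subseteq Z_{\R}$ by a pointwise application of the standing hypothesis on successively smaller subsets. Fix $x\in X$, $y\in Y$, and a measurable $T\subseteq\Omega$ with $P_Ty\ne0$. By the hypothesis I extract a measurable $T_0\subseteq T$ with $P_{T_0}y\ne0$ together with the associated constant $C>0$. Mimicking Proposition~\ref{p:simpleVpair}, I then choose a positive integer $N$ such that $T_N\coloneqq\Curly{t\in T_0:N\Abs{y(t)}>\Abs{x(t)}}$ has positive measure; this is possible since the sets $T_N$ exhaust the positive-measure set $T_0\cap\supp y$. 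A useful observation is that $T_N\subseteq\supp y$, because $y(t)=0$ would force the impossible inequality $0>\Abs{x(t)}$.

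Since $y\in Y\subseteq Z$ and $Z$ is locally regular, I pick measurable subsets $D_n\subseteq T_N$ of positive measure with $\Norm{P_{D_n}y}_Z\to0$. For each $n$ I reapply the hypothesis to $D_n$, thereby obtaining a further subset $\tilde D_n\subseteq D_n$ of positive measure together with an element $x_n\in P_{\tilde D_n,x}\cap X$ satisfying $\Norm{x_n}_X\le C\Norm{\Abs{P_{\tilde D_n}x}}_{Z_{\R}}$. These sequences $\tilde D_n$ and $x_n$ will be the witnesses required by Definition~\ref{d:Vpair}.

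It remains to verify the three conditions. The inclusion $\tilde D_n\subseteq T_N\subseteq\supp y$ together with $\mes\tilde D_n>0$ gives $P_{\tilde D_n}y\ne0$. On $\tilde D_n$ the defining inequality of $T_N$ combined with preideality of $Z_{\R}$ yields $\Norm{\Abs{P_{\tilde D_n}x}}_{Z_{\R}}\le N\Norm{P_{\tilde D_n}y}_Z\le N\Norm{P_{D_n}y}_Z\to0$, so $\Norm{x_n}_X\to0$. For any $y_n\in P_{\tilde D_n,y}\cap Y$ the coincidence of $y_n$ with $y$ on $\tilde D_n$ gives $\Abs{P_{\tilde D_n}y(t)}\le\Abs{y_n(t)}$ almost everywhere, hence $\Norm{P_{\tilde D_n}y}_Z\le\Norm{y_n}_Y$; combining the estimates produces the uniform bound $\Norm{x_n}_X/\Norm{y_n}_Y\le CN$. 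The only delicate point is ensuring that the successive shrinkings $T\to T_0\to T_N\to D_n\to\tilde D_n$ simultaneously preserve positive measure and the nonvanishing of $y$, which is precisely what the inclusion $T_N\subseteq\supp y$ is designed to arrange; beyond this bookkeeping, the argument is mechanical.
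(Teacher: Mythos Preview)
Your argument is correct and follows exactly the route the paper indicates: replace $T$ by the set $T_0$ supplied by the hypothesis, carry out the proof of Proposition~\ref{p:simpleVpair} inside $T_0$, and then pass from each $D_n$ to the subset $\tilde D_n$ (with the accompanying $x_n$) that the hypothesis provides. The only cosmetic remark is that your phrase ``reapply the hypothesis to $D_n$'' could be read as invoking the full hypothesis afresh (which would produce a new $T_0$ and a new constant); what you actually use---and what the fixed constant $C$ in your estimate makes clear---is the ``for every $D\subseteq T_0$'' clause of the single application already made, which is legitimate since $D_n\subseteq T_N\subseteq T_0$.
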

\begin{proof}
In the proof of Proposition~\ref{p:simpleVpair}, we just replace $T$ by $T_0$
and $D_n$ by subsets such that (by hypothesis) there are
functions $x_n\in P_{D_n,x}\cap X$ satisfying
$\Norm{x_n}_X\le C\Norm{\,\Abs{P_{D_n}x}\,}_{Z_{\R}}$.
\end{proof}

For the case that $\mes\Omega=\infty$ or that $X$ is a subspace of smooth
functions, the subsequent result does not follow from
Proposition~\ref{p:simpleVpair} alone, but it follows from
the technical extension in Proposition~\ref{p:clumsyVpair}.

\begin{corollary}
Let $\Omega$ contain no atom of finite measure, and $1\le p\le q<\infty$.
Then $(X,Y)=(L_p(\Omega,E_1),L_q(\Omega,E_2))$ is a weak $V$-pair.

The same assertion holds if $Y$ is replaced by any subspace of
$L_q(\Omega,E_2)$, endowed with the $L_q$-norm, and/or if
$\Omega$ is a regular Radon measure space without finite atoms
and $X$ is replaced by the subspace consisting of all
continuous functions of $L_p(\Omega,E_2)$, endowed with the $L_p$-norm.
If $\Omega$ is a manifold of class $C^n$ or $C^\infty$, then $X$
can also be replaced by the subset of functions from the class
$C^n$ or $C^\infty$.
\end{corollary}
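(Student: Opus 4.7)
The plan is to derive every variant of the corollary from Proposition~\ref{p:clumsyVpair}, applied with the auxiliary space $Z=L_q(\Omega,E_2)$. Local regularity of $Z$ follows from Proposition~\ref{p:localreg}, since $\Omega$ has no atoms of finite measure and $1\le q<\infty$. In each case it then suffices to verify the hypothesis of Proposition~\ref{p:clumsyVpair}: given $x\in X$, $y\in Y$ and a measurable $T$ with $P_Ty\ne0$, produce $T_0\subseteq T$ with $P_{T_0}y\ne0$ and a constant $C$ so that for every measurable $D\subseteq T_0$ of positive measure there exist $D_0\subseteq D$ of positive measure and $x_D\in P_{D_0,x}\cap X$ with $\Norm{x_D}_X\le C\Norm{\,\Abs{P_{D_0}x}\,}_{L_q}$.

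For the basic case $X=L_p(\Omega,E_1)$ with $Y$ any subspace of $L_q(\Omega,E_2)$ carrying the $L_q$-norm, the obstruction that $L_p\not\subseteq L_q$ in general for $\mes\Omega=\infty$ is circumvented locally. Since $y\in L_q$ is $\sigma$-finitely supported, I first pick $T_0\subseteq T$ with $0<\mes T_0<\infty$ and $P_{T_0}y\ne0$. Then H\"older's inequality, in the form $\Norm{P_Dx}_p\le(\mes T_0)^{1/p-1/q}\Norm{P_Dx}_q$ for $D\subseteq T_0$, immediately verifies the hypothesis with $D_0=D$, $x_D=P_Dx\in L_p$, and $C=(\mes T_0)^{1/p-1/q}$.

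For $X$ equal to the subspace of continuous (respectively $C^n$ or $C^\infty$) functions in $L_p$, the difficulty is that $P_Dx$ typically fails to be continuous or smooth, so I replace it by a cutoff $\varphi x$. If $x=0$ almost everywhere on $T$, simply take $x_D=0$. Otherwise I localise to a compact region on which $\Abs x$ is bounded away from zero: choose $c>0$ and $N\in\mathbb N$ so that the set $T^*\coloneqq\{t\in T:\Abs{x(t)}\ge c\text{ and }N\Abs{y(t)}>\Abs{x(t)}\}$ has positive measure, then use inner regularity of the Radon measure to obtain a compact $K_0\subseteq T^*$ with $0<\mes K_0\le 1$, together with an open $V\supseteq K_0$ whose closure is compact, so that $M\coloneqq\sup_{\overline V}\Abs x<\infty$. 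I set $T_0\coloneqq K_0$; since $\Abs y>c/N$ on $T_0$, $P_{T_0}y\ne0$. For a given $D\subseteq T_0$ of positive measure, inner regularity yields a compact $K\subseteq D$ of positive measure and outer regularity an open $U$ with $K\subseteq U\subseteq V$ and $\mes U\le 2\mes K$. Urysohn's lemma in the continuous case, and a standard smooth bump function on the manifold in the $C^n$/$C^\infty$ case, produce a cutoff $\varphi\colon\Omega\to[0,1]$ of the required regularity with $\varphi\equiv 1$ on $K$ and $\supp\varphi\subseteq U$; then $x_D\coloneqq\varphi x$ lies in $X$, agrees with $x$ on $D_0\coloneqq K$, and satisfies $\Norm{x_D}_p\le M(\mes U)^{1/p}\le 2^{1/p}M(\mes K)^{1/p}$ while $\Norm{P_Kx}_q\ge c(\mes K)^{1/q}$. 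Using $p\le q$ and $\mes K\le\mes K_0\le 1$, one has $(\mes K)^{1/p-1/q}\le 1$, giving $\Norm{x_D}_p\le(2^{1/p}M/c)\Norm{P_Kx}_q$ as required.

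The main obstacle is precisely this last construction: the definition of $P_{D_0,x}$ forces $x_D$ to coincide pointwise with $x$ on $D_0$, yet $x_D$ must simultaneously lie in the smaller smooth space $X$ and have a norm controlled by $\Norm{P_{D_0}x}_q$. The cutoff trick works only because, after restricting in advance to a compact region on which $\Abs x$ is pinched between $c$ and $M$, the outer regularity of the Radon measure lets me choose $\supp\varphi$ exceeding $K$ by at most a fixed factor in measure, absorbing the extra mass of $\varphi x$ on $U\setminus K$ into the constant.
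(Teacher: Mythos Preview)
Your treatment of the basic case $X=L_p$ matches the paper's. For the continuous/smooth subspace there is a small gap in your case split, and your route then differs from the paper's.

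The gap: in the ``otherwise'' branch you assert one can choose $c>0$ and $N$ making $T^*=\{t\in T:\Abs{x(t)}\ge c\text{ and }N\Abs{y(t)}>\Abs{x(t)}\}$ of positive measure, but every point of $T^*$ lies in $\supp(P_Tx)\cap\supp(P_Ty)$, and nothing you have assumed forces that intersection to have positive measure. If $x$ is not a.e.\ zero on $T$ but vanishes a.e.\ on $T\cap\supp y$, your second branch cannot produce the claimed $T^*$. The fix is easy: replace $T$ by $T\cap\supp y$ at the outset (legitimate since $P_Ty=P_{T\cap\supp y}y$); then your first case becomes ``$x=0$ a.e.\ on $T\cap\supp y$'' and the second is genuinely its complement, so such $c,N$ do exist.

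Once repaired, your argument works, but it proceeds differently from the paper. You arrange in advance a two-sided pointwise bound $c\le\Abs x\le M$ on the region of interest and then estimate $\Norm{\varphi x}_p$ and $\Norm{P_Kx}_q$ directly from the measures of $U$ and $K$. The paper avoids any lower bound on $\Abs x$: it only fixes a finite-measure $T_0\subseteq T\cap\supp y$, takes a compact $D_0\subseteq D$, chooses a \emph{sequence} of open $U_k\supseteq D_0$ with $\mes(U_k\setminus D_0)\to0$ and corresponding Urysohn (or smooth) cutoffs $\lambda_k$, and uses dominated convergence to get $\Norm{\lambda_kx}_p\to\Norm{P_{D_0}x}_p$; a single application of H\"older on $T_0$ then converts the $L_p$-norm into the $L_q$-norm. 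Your version gives an explicit constant $2^{1/p}M/c$ at the price of more preparatory restriction; the paper's version is shorter and needs no pinching of $\Abs x$, but relies on a limit step.
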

\begin{proof}
Note that $Z\coloneqq L_q(\Omega,E_2)$ is a locally regular preideal space by
Proposition~\ref{p:localreg} with real form $Z_{\R}=L_q(\Omega,\R)$.
For the case $\mes\Omega<\infty$ and $X=L_p(\Omega,E_1)$, the assertion
follows from Propositionn~\ref{p:simpleVpair}. The general case follows
from Proposition~\ref{p:clumsyVpair}:

If $y\in Y$ then $\supp Y$ is $\sigma$-finite. Hence, if $P_Ty\ne0$ then
$T\cap\supp y$ contains a subset $T_0$ of positive finite measure, and
so we have a continuous embedding of $L_p(T_0,\R))$ into $L_q(T_0,\R)$.
In the case $X=L_p(\Omega,E_1)$, we thus have the property of
Proposition~\ref{p:clumsyVpair} with $x_D\coloneqq P_Dx$ and $D_0\coloneqq D$.

For the case that $\Omega$ is a regular Radon measure,
we can additionally choose $D_0$ in Proposition~\ref{p:clumsyVpair}
to be compact. To find $x_D$ in Proposition~\ref{p:clumsyVpair},
we first choose a sequence of open sets $U_k\supseteq D_0$
with $\mes(U_k\setminus D_0)\to0$. By Urysohn's lemma, there are
continuous functions $\lambda_k\colon\Omega\to[0,1]$ satisfying
$\lambda_k(t)=1$ for $t\in D_0$ and $\lambda_k(t)=0$ for $t\notin U_k$.
Moreover, $\lambda_k$ can be chosen to be of class $C^n$ or $C^\infty$
if $\Omega$ is a manifold of class $C^n$ or $C^\infty$, respectively,
see e.g.~\cite[Theorem~9.8]{VaethTriple}. In view of $x\in X$
and since $X$ is endowed with the norm of $L_p(X,E_1)$, the function sequence
$x_k(t)\coloneqq\lambda_k(t)x(t)$ thus belongs to $P_{D_0,x}\cap X$
and satisfies $\Norm{x_k}\to\Norm{P_{D_0}x}$ as $k\to\infty$.
For sufficiently large $k$, the function $x_D\coloneqq x_k$ thus has the
property required in Proposition~\ref{p:clumsyVpair}.
\end{proof}

Now we explain the relation of Definition~\ref{d:Vpair} to the notion of
a $V$-pair introduced in~\cite[Section~2.6]{Appell}.
We first recall the latter definition.

If $\mes\Omega<\infty$ and $\Omega$ has no atoms of finite measure then
a pair of ideal spaces $(X,Y)$ of functions on $\Omega$ is a \emph{$V$-pair}
if there are $u_0\in X$ and measurable $v_0\colon\Omega\to\R$
with $\supp u_0=\supp v_0=\Omega$ and a constant $C$ with
\begin{equation}\label{e:Vpair}
\Norm{P_Du_0}_X\sup_{\Norm y_Y\le1}\,\int_D\Abs{v_0(t)y(t)}\dt\le C\mes D
\end{equation}
for every measurable $D\subseteq\Omega$.

\begin{proposition}\label{p:Vpair}
Every $V$-pair $(X,Y)$ is a weak $V$-pair.
\end{proposition}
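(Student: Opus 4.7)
The plan is to exploit the two privileged functions $u_0\in X$ and $v_0$ from the $V$-pair condition. For given $x\in X$, $y\in Y$, and $T\subseteq\Omega$ with $P_Ty\ne0$, I will construct a shrinking sequence of test sets $D_n\subseteq T$ of positive measure and take $x_n\coloneqq P_{D_n}x$ (which lies in $P_{D_n,x}\cap X$ since $X$ is ideal). On $D_n$, the function $u_0$ will dominate $x$ pointwise, while $v_0$ together with~\eqref{e:Vpair} will allow me to dominate $\Norm{P_{D_n}u_0}_X$ in norm by $\Norm{P_{D_n}y}_Y$.

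First I would localize: since $u_0(t)\ne0$ and $v_0(t)\ne0$ almost everywhere, a standard level-set argument applied to $\Abs x/\Abs{u_0}$ and to $\Abs{v_0y}$ produces constants $M,c>0$ and a measurable set $T_0\subseteq T\cap\supp(P_Ty)$ of positive finite measure with $\Abs x\le M\Abs{u_0}$ and $\Abs{v_0y}\ge c$ on $T_0$. Since $\Omega$ has no atoms of finite measure, $T_0$ splits into a sequence of measurable subsets $D_n\subseteq T_0$ with $0<\mes D_n\to0$, each of which automatically satisfies $P_{D_n}y\ne0$. The ideal property of $X$ then yields $\Norm{x_n}_X=\Norm{P_{D_n}x}_X\le M\Norm{P_{D_n}u_0}_X$, while the ideal property of $Y$ yields $\Norm{y_n}_Y\ge\Norm{P_{D_n}y}_Y$ for every $y_n\in P_{D_n,y}\cap Y$, so the supremum in~\eqref{e:Vlimits} collapses to $\Norm{x_n}_X/\Norm{P_{D_n}y}_Y$.

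The decisive step is to apply~\eqref{e:Vpair} on $D=D_n$ with test function $z\coloneqq P_{D_n}y/\Norm{P_{D_n}y}_Y$, which has $\Norm z_Y\le1$: on $D_n\subseteq T_0$ one obtains $\int_{D_n}\Abs{v_0z}\dt\ge c\mes D_n/\Norm{P_{D_n}y}_Y$, so~\eqref{e:Vpair} forces $\Norm{P_{D_n}u_0}_X\le(C/c)\Norm{P_{D_n}y}_Y$ and hence $\Norm{x_n}_X\le(MC/c)\Norm{P_{D_n}y}_Y$, which bounds the ratio in~\eqref{e:Vlimits} by $MC/c$. The main obstacle I anticipate is the remaining convergence $\Norm{x_n}_X\to0$: the cleanest route is to invoke absolute continuity of the $X$-norm at $u_0$ (which is implicit in the $V$-pair notion of~\cite[Section~2.6]{Appell}, since it rules out pathological examples such as $(L_\infty,L_\infty)$), so that $\mes D_n\to0$ forces $\Norm{P_{D_n}u_0}_X\to0$ and hence $\Norm{x_n}_X\to0$ via the earlier bound $\Norm{x_n}_X\le M\Norm{P_{D_n}u_0}_X$.
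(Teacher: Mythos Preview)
Your argument is essentially the paper's own: localize to a subset of $T\cap\supp y$ on which $\Abs x\lesssim\Abs{u_0}$ and $\Abs{v_0y}\gtrsim1$ (the paper combines your $M$ and $c$ into a single integer $N$ via $T_N=\{t\in T:N\Abs{u_0}\ge\Abs x,\ N\Abs{v_0y}\ge1\}$), shrink via the atom-free hypothesis to $D_n\subseteq T_N$ with $\mes D_n\to0$, set $x_n=P_{D_n}x$, and test~\eqref{e:Vpair} against the normalized $P_{D_n}y$ to bound the ratio uniformly by $N^2C$. The paper is likewise tacit about the convergence $\Norm{x_n}_X\to0$ that you flag, so your explicit appeal to absolute continuity of the norm at $u_0$ is a reasonable reading of what is implicitly used.
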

\begin{proof}
Let $x\in X$, $y\in Y$ and $T$ with $P_Ty\ne0$ be as in
Definition~\ref{d:Vpair}. There is a natural number $N$ such that
\[T_N\coloneqq\Curly{t\in T:\text{$N\Abs{u_0(t)}\ge\Abs{x(t)}$ and
$N\Abs{v_0(t)y(t)}\ge1$}}\]
has positive measure, because otherwise $\bigcup_{N=1}^\infty T_N=T\cap\supp y$
would be a null set (recall that $\supp u_0=\supp v_0=\Omega$).
Since $\Omega$ has no atoms, there is a sequence $D_n\subseteq T_N$
with $0<\mes D_n\to0$. We claim that $x_n\coloneqq P_{D_n}x$ has the property
required in Definition~\ref{d:Vpair}. Indeed, if $y_n\in P_{D_n,y}\cap Y$,
then $c_n\coloneqq1/\Norm{y_n}_Y$ satisfies $\Norm{c_nP_{D_n}y}_Y\le1$.
Hence,~\eqref{e:Vpair} implies
\[\frac{\Norm{x_n}_X}{\Norm{y_n}_Y}=
\frac{\Norm{x_n}_X}{\mes D_n}\int_{D_n}c_n\dt\le
\frac{N\Norm{P_{D_n}u_0}_X}{\mes D_n}
\int_{D_n}N\Abs{v_0(t)c_nP_{D_n}y(t)}\dt\le
N^2C\text.\]
The constant $N^2C$ is independent of $n$ and of the choice of $y_n$.
\end{proof}

\section{Differentiability}\label{s:diff}

\begin{definition}\label{d:maximal}
An operator $G_0\colon U\to Y$ with $U\subseteq X$ is \emph{locally determined}
if for every measurable $D\subseteq X$ and every $x\in U$ the restriction
$G_0(x_D)|_D$ is (almost everywhere) independent of the choice
$x_D\in P_{D,x}\cap U$. We say that such a map $G_0$ has
\emph{maximal domain} if the following property implies $x\in U$:
$x\in X$, and there is some $y\in Y$ such that every set
$T\subseteq\Omega$ of positive measure contains a subset $D\subseteq T$ of
positive measure such that there is $x_D\in P_{D,x}\cap U$ with
$y|_D=G_0(x_D)|_D$ (and in this case necessarily $G_0(x)=y$).
\end{definition}

\begin{definition}
An operator $F\colon U\to Y$ with $U\subseteq X$ is a
\emph{$G$-abstract fully nonlinear integral operator} at $x_0\in U$
(with maximal domain) if $G$ is a map $G\colon M\to Y$ with
$M\subseteq X\times X$ containing the diagonal $\Curly{(x,x):x\in U}$
such that $F(x)=G(x,x)$ for all $x\in M$ and such that
$G(\parameter,x_0)$ is locally determined (with maximal domain).
\end{definition}

When we speak about fully nonlinear integral operators,
we always have a natural such map $G$ in mind:

\begin{proposition}\label{p:abstract}
Let $X$ and $Y$ be spaces of measurable functions.
Let $D_0\subseteq\Omega\times E_1$, $D_1\subseteq\Omega\times\Omega\times E_1$,
$k_i\colon D_i\to E_2$ $(i=0,1)$,
$D\subseteq\Omega\times\Omega\times E_1\times E_1$, and $k\colon D\to E_2$.
Define $K_0$ and $K_1$ by the formulas~\eqref{e:Kzero} and~\eqref{e:Kone},
and $K_2$ by
\[K_2(x_1,x_2)(t)=\int_\Omega k(t,s,x_1(t),x_2(s))\ds\text.\]
Let
\[G(x_1,x_2)\coloneqq K_0(x_1)+K_1(x_2)+K_2(x_1,x_2)\text.\]
Then $G(\parameter,x_0)$ is locally determined for every $x_0\in X$.
Moreover, we choose the natural domain $M_{x_0}\subseteq X$
of definition of $G(\parameter,x_0)\colon M_{x_0}\to Y$, that is,
if $M_{x_0}$ contains all those $x_1\in X$ such that the function
$G(x_1,x_0)$ is defined almost everywhere and an element of $Y$, then
$G(\parameter,x_0)$ has maximal domain $M_{x_0}$.

The generalized fully nonlinear integral operator
$F=K_0+K_1+K$ from the introduction satisfies $F(x)=G(x,x)$ and is a
$G$-abstract fully nonlinear integral operator  (with maximal domain)
at every point $x_0$ from the domain of $F$.
\end{proposition}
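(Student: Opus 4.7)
My plan is to reduce all three assertions to a single pointwise dependency observation: once the second slot of $G$ is frozen at $x_0$, the value $G(x_1, x_0)(t)$ depends on $x_1$ only through the number $x_1(t)$. Indeed, $K_0(x_1)(t)=k_0(t,x_1(t))$ is manifestly pointwise in $x_1$; $K_1(x_0)(t)$ does not involve $x_1$ at all; and in $K_2(x_1,x_0)(t)=\int_\Omega k(t,s,x_1(t),x_0(s))\ds$ the function $x_1$ enters only as the third argument, evaluated at the fixed point $t$. With this in hand, local determination of $G(\parameter,x_0)$ is immediate: for any measurable $D\subseteq\Omega$ and any $x_D\in P_{D,x}$ in the domain of $G(\parameter,x_0)$, the equality $x_D(t)=x(t)$ almost everywhere on $D$ propagates to $G(x_D,x_0)(t)=G(x,x_0)(t)$ almost everywhere on $D$, so $G(x_D,x_0)|_D$ is independent of the choice of $x_D$.

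For maximality of the natural domain $M_{x_0}$ (the set of $x_1\in X$ with $G(x_1,x_0)$ defined almost everywhere and belonging to $Y$), I would argue as follows. Assume $x\in X$ and $y\in Y$ satisfy the hypothesis of Definition~\ref{d:maximal}. For each admissible pair $(D,x_D)$ produced by that hypothesis, the pointwise dependency above shows that the expression defining $G(x,x_0)(t)$ is well-defined and agrees with $G(x_D,x_0)(t)=y(t)$ for almost every $t\in D$. A standard exhaustion then finishes the job: let $N$ be the set of $t$ at which $G(x,x_0)(t)$ either fails to be defined or differs from $y(t)$; if $\mes N>0$, applying the hypothesis with $T=N$ furnishes a subset $D\subseteq N$ of positive measure on which $G(x,x_0)=y$, a contradiction. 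Hence $\mes N=0$, yielding simultaneously $x\in M_{x_0}$ and $G(x,x_0)=y$.

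The final assertion is cosmetic: taking $x_1=x_2=x$ gives $K_2(x,x)(t)=\int_\Omega k(t,s,x(t),x(s))\ds=K(x)(t)$, whence $G(x,x)=K_0(x)+K_1(x)+K(x)=F(x)$; applying the first two parts at each $x_0$ in the domain of $F$ then exhibits $F$ as a $G$-abstract fully nonlinear integral operator with maximal domain at $x_0$. The point I expect to require the most care is the measurability of $N$ in the exhaustion step; this rests on tacit Carath\'{e}odory-type assumptions that the partial maps $t\mapsto k_0(t,x(t))$ and $t\mapsto\int_\Omega k(t,s,x(t),x_0(s))\ds$ are measurable wherever they are defined. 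Once that measurability is granted, the exhaustion is formal, since the hypothesis directly refutes positive measure of the failure set.
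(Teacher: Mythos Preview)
Your proposal is correct and follows essentially the same route as the paper: both arguments isolate the key pointwise observation that $G(x_1,x_0)(t)$ depends on $x_1$ only through $x_1(t)$, and then derive maximality of the natural domain by contradiction, applying the hypothesis of Definition~\ref{d:maximal} to a putative positive-measure failure set. Your caveat about the measurability of the failure set $N$ is a legitimate technical point that the paper's proof also leaves implicit.
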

\begin{proof}
Since $K(x)=K_2(x,x)$, the only nontrivial assertion is that
$G(\parameter,x_0)$ is locally determined with maximal domain $U=M_{x_0}$.

Let $x\in X$ and $y\in Y$ be as in Definition~\ref{d:maximal}. We are to show
that $x\in U$ and $G(x,x_0)=y$. Assume by contradicion that there is a set
$T\subseteq\Omega$ of positive measure such that for almost all $t\in T$
the value $G(x,x_0)(t)$ is undefined or differs from $y(t)$.
By hypothesis, there is a subset $D\subseteq T$ of positive measure and
$x_D\in P_{D,x}\cap U$ such that $y|_D=G(x_D,x_0)|_D$, and so the form of $G$
implies that $G(x,x_0)(t)$ is defined and equal to $G(x_D,x_0)(t)=y(t)$
for almost all $t\in D$, which is a contradiction.
\end{proof}

We point out that it plays no role for the assertion (and proof) of
Proposition~\ref{p:abstract} whether we understand the integral in $K_1$
and $K_2$ in the sense of Bochner or Pettis or in some other sense
(Kurzweil, etc.): The only property which we require from the used integration
theory is that the existence and value of the integral depends only on the
equivalence class of the integrand.

In order to formulate our degeneration result for differentiable
($G$-abstract) fully nonlinear integral operators, we need an
auxiliary notion of differentiability.

If $X_1$ and $X_2$ are normed spaces, we denote the set of all linear bounded
operators $L\colon X_1\to X_2$ by $\Lc(X_1,X_2)$.

\begin{definition}
Let $X_1,X_2,Y$ be normed spaces, and $M\subseteq X_1\times X_1$.
We call a function $G\colon M\to Y$ \emph{diagonal-differentiable}
(with respect to the second variable) at an interior point
$(x_0,y_0)\in X_1\times X_2$ of $M$ if there is $L\in\Lc(X_2,Y)$ with
\[\lim_{\Norm h\to0}\,\sup_{\Norm k\le\Norm h}\,
\frac{\Norm{G(x_0+k,y_0+h)-G(x_0+k,y_0)-Lh}}{\Norm h}=0\text.\]
In this case we call $L$ the (partial) diagonal-derivative.
\end{definition}

Taking only $k=0$ in this limit, we obtain as a trivial special case:

\begin{proposition}\label{p:diagfrechet}
If $G$ is diagonal-differentiable with respect to the second variable
at $(x_0,y_0)$, then the partial Fr\'{e}chet derivative (hence also
Gateaux derivative) $D_2G(x_0,y_0)$ with respect to the second variable exists
and is equal to the diagonal-derivative $L$.
In particular, $L=D_2G(x_0,y_0)$ is uniquely determined.
\end{proposition}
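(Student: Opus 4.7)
The plan is to exploit the fact that the supremum appearing in the definition of diagonal-differentiability already contains the choice $k=0$, which is admissible because $\Norm0=0\le\Norm h$ for every $h$. Therefore the ratio
\[\frac{\Norm{G(x_0,y_0+h)-G(x_0,y_0)-Lh}}{\Norm h}\]
is dominated, uniformly in $h\ne0$, by the supremum
\[\sup_{\Norm k\le\Norm h}\,
\frac{\Norm{G(x_0+k,y_0+h)-G(x_0+k,y_0)-Lh}}{\Norm h}\text.\]
By hypothesis the latter tends to zero as $\Norm h\to0$, and this is precisely the definition of Fr\'{e}chet-differentiability at $y_0$ of the partial map $h\mapsto G(x_0,y_0+h)$, with derivative $L\in\Lc(X_2,Y)$. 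In particular $D_2G(x_0,y_0)$ exists and equals $L$.

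For the uniqueness (and the Gateaux derivative), I would invoke the standard fact that a Fr\'{e}chet derivative of any map is automatically a Gateaux derivative and is uniquely determined: if $L'\in\Lc(X_2,Y)$ were another operator satisfying the Fr\'{e}chet-derivative condition at $(x_0,y_0)$, subtracting the two conditions gives $\Norm{(L-L')h}/\Norm h\to0$ as $\Norm h\to0$; specializing $h=te$ for fixed $e\ne0$ and $t\to0^+$ reduces this to the constant $\Norm{(L-L')e}/\Norm e$, which must therefore vanish, forcing $L=L'$.

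There is no real obstacle here; the statement is essentially a bookkeeping observation that diagonal-differentiability is a strictly stronger uniformity condition than ordinary partial Fr\'{e}chet-differentiability, the extra strength residing in the perturbation $k$ of the first argument. The only thing to be careful about is that the sup is over $\Norm k\le\Norm h$ (not $\Norm k<\Norm h$), so $k=0$ is unambiguously included even when $h=0$ would be degenerate; but since we take the limit over $\Norm h\to0$ with $h\ne0$, this causes no issue.
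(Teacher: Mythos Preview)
Your argument is correct and is exactly the paper's approach: the paper remarks that taking $k=0$ in the defining limit yields the result as a trivial special case, which is precisely what you spell out. Your added uniqueness paragraph is fine but superfluous, since uniqueness of Fr\'{e}chet derivatives is standard.
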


The following result shows that being diagonal-differentiable (with respect
to the second variable) is actually only mildly more a requirement than
being partially differentiable (with respect to the second variable):

\begin{proposition}\label{p:diagonal}
Let $X_1,X_2,Y$ be normed spaces, $(x_0,y_0)$ be an interior point of
$U_1\times U_2\subseteq X_1\times Y_1$, and $G\colon U\times V\to Z$.
Then each of the following hypotheses implies that $G$ is
diagonal-differentiable with respect to the second variable at $(x_0,y_0)$
with partial diagonal-derivative $L$:
\begin{enumerate}
\item\label{i:contderiv}
The partial Gateaux derivatives $D_2G(x,y)$ exist
for all $(x,y)$ in a neighborhood of $(x_0,y_0)$, and
\[\lim_{(x,y)\to(x_0,y_0)}D_2G(x,y)=D_2G(x_0,y_0)=L\]
in operator norm.
\item\label{i:normcont}
There is a neighborhood $U_0\subseteq U_1$ of $x_0$ such that the
family of functions $G(x,\parameter)$ $(x\in U_0)$ is
equidifferentiable at $y_0$ with derivatives $D_2G(x,y_0)$, that is
\[\lim_{\Norm h\to0}\,\sup_{x\in U_0}\,
\frac{\Norm{G(x,y_0+h)-G(x,y_0)-D_2G(x,y_0)h}}{\Norm h}=0\text,\]
and
\[\lim_{x\to x_0}D_2G(x,y_0)=D_2G(x_0,y_0)=L\]
in operator norm.
\item\label{i:independent}
$G$ is independent from the second variable and $L=0$.
\end{enumerate}
\end{proposition}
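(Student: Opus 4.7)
The plan is to introduce the remainder
\[
R(k,h)\coloneqq G(x_0+k,y_0+h)-G(x_0+k,y_0)-Lh
\]
and to show, in each of the three cases, that $\Norm{R(k,h)}/\Norm h\to0$ as $\Norm h\to0$, uniformly in $k$ with $\Norm k\le\Norm h$; this is exactly what the definition of diagonal-differentiability requires. Case~\ref{i:independent} is immediate, since then $R(k,h)=0$ identically.

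For case~\ref{i:normcont}, I would perform the one-line split
\[
R(k,h)=\bigl(G(x_0+k,y_0+h)-G(x_0+k,y_0)-D_2G(x_0+k,y_0)h\bigr)+\bigl(D_2G(x_0+k,y_0)-L\bigr)h\text.
\]
For $\Norm h$ small enough, the constraint $\Norm k\le\Norm h$ forces $x_0+k\in U_0$, so the first bracket is $o(\Norm h)$ uniformly in such $k$ by the equidifferentiability hypothesis, while the norm of the second bracket is at most $\Norm{D_2G(x_0+k,y_0)-L}\cdot\Norm h$, with the operator-norm factor tending to $0$ as $k\to0$ by the second half of the hypothesis.

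For case~\ref{i:contderiv}, I would estimate $R(k,h)$ directly by a Hahn--Banach-assisted mean-value argument. Fix $\ell\in Y^*$ with $\Norm\ell\le1$, and take $\Norm h$ and $\Norm k$ small enough that the segment $\Curly{(x_0+k,y_0+th):t\in[0,1]}$ stays inside the neighborhood on which $D_2G$ exists. The scalar function $\phi(t)\coloneqq\ell\bigl(G(x_0+k,y_0+th)-tLh\bigr)$ is then differentiable on $[0,1]$ with $\phi'(t)=\ell\bigl((D_2G(x_0+k,y_0+th)-L)h\bigr)$, so the classical one-dimensional mean-value theorem, followed by taking the supremum over $\Norm\ell\le1$, yields
\[
\Norm{R(k,h)}\le\Norm h\,\sup_{t\in[0,1]}\Norm{D_2G(x_0+k,y_0+th)-L}\text.
\]
The operator-norm continuity hypothesis makes the right-hand side $o(\Norm h)$ as $\Norm h\to0$ with $\Norm k\le\Norm h$, finishing the proof.

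The main obstacle is precisely this mean-value step: Gateaux derivatives do not in general admit a Bochner integral representation $\int_0^1D_2G(x_0+k,y_0+th)h\dt$, so one cannot simply invoke the fundamental theorem of calculus. The Hahn--Banach reduction to the real-valued $\phi$ sidesteps the issue, because along the one-dimensional segment only the directional derivative along $h$ enters, and for scalar functions the classical mean-value theorem is available. Everything else is routine bookkeeping.
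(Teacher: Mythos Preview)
Your proof is correct and follows the same overall structure as the paper's: cases~\ref{i:normcont} and~\ref{i:independent} are handled identically, and for case~\ref{i:contderiv} both arguments reduce to the mean-value inequality
\[
\Norm{R(k,h)}\le\Norm h\,\sup_{t\in[0,1]}\Norm{D_2G(x_0+k,y_0+th)-L}\text.
\]
The only substantive difference is how this inequality is obtained. You reduce to the scalar case via Hahn--Banach and then invoke the classical one-dimensional mean-value theorem; the paper instead appeals to its Lemma~\ref{l:mean}, a direct mean-value inequality for vector-valued functions proved in the appendix by an elementary sup-argument that deliberately avoids any form of the axiom of choice. Both routes are standard and equally short; the paper's choice is motivated by its explicit remark that Lemma~\ref{l:mean} is choice-free, whereas your Hahn--Banach step tacitly uses it. This is a stylistic rather than mathematical gap, but worth noting since the paper makes a point of it.
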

\begin{proof}
Suppose that~\iref{i:contderiv} holds. Then for each $\varepsilon>0$ there
is $\delta>0$ with $\Norm{D_2G(x,y)-L}<\varepsilon$ for every
$x=x_0+k$ with $\Norm k\le\delta$ and $\Norm{y-y_0}\le\delta$.
Applying Lemma~\ref{l:mean} from the appendix with
$F_{x,h}(\lambda)=G(x,y_0+\lambda h)-G(x,y_0)-L\lambda h$ and
$0<\Norm h\le\delta$, we obtain
\begin{gather*}
\Norm{G(x,y_0+h)-G(x,y_0)-Lh}=\Norm{F_{x,h}(1)-F_{x,h}(0)}\le
\sup_{\lambda\in[0,1]}\,\Norm{F_{x,h}'(\lambda)}=\\
\sup_{\lambda\in[0,1]}\,\Norm{D_2G(x,y_0+\lambda h)h-Lh}\le
\varepsilon\Norm h\text.
\end{gather*}
Dividing by $\Norm h$, we obtain the assertion.

If~\iref{i:normcont} holds, then for each $\varepsilon>0$ there
is $\delta>0$ such that $\Norm{D_2G(x,y_0)-L}<\varepsilon$ whenever
$x=x_0+k$ with $\Norm k\le\delta$. In particular, if
$\Norm k\le\Norm h\le\delta$, we have
\[\frac{\Norm{G(x,y_0+h)-G(x,y_0)-Lh}}{\Norm h}\le
\frac{\Norm{G(x,y_0+h)-G(x,y_0)-D_2G(x,y_0)h}}{\Norm h}+\varepsilon
\text.\]
Now the claim follows from the equidifferentiability.

The assertion~\iref{i:independent} is trivial (and also follows from each
of the previous assertions).
\end{proof}

\begin{remark}\label{r:contFrechet}
As a side result, we re-obtain from
Proposition~\ref{p:diagonal}\iref{i:contderiv} and
Proposition~\ref{p:diagfrechet} the well-known fact that if the
Gateaux derivatives $D_2G$ are continuous in some point $(x_0,y_0)$
(in the interior of their domain of definition) then $D_2G(x_0,y_0)$
is necessarily even a Fr\'{e}chet derivative.
\end{remark}

Now we are in a position to formulate the general form of our
announced degeneracy result concerning differentiability.

We call a map $F\colon X\to Y$ \emph{bounded affine}, if it has the form
$F(x)=y+Ax$ with some $y\in Y$ and $A\in\Lc(X,Y)$.

\begin{theorem}[Differentiability-Degeneration]\label{t:fulldiff}
Let $(X,Y)$ be a weak $V$-pair, $U\subseteq X$, and
$F\colon U\to Y$ be a $G$-abstract fully nonlinear integral operator at
$x_0\in U$ with maximal domain. Suppose in addition that $G$ is
diagonal-differentiable at $(x_0,x_0)$ with respect to the second variable.

If $F$ is Fr\'{e}chet differentiable at $x_0$ then $U=X$, and
$G(\parameter,x_0)$ is bounded affine.
\end{theorem}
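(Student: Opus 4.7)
The plan is to reduce the problem to showing that $G(\cdot,x_0)$ is bounded affine on its maximal domain, then to use the maximal-domain hypothesis to extend to all of $X$. First, by specializing the diagonal-differentiability condition to $k=h$, I get $G(x_0+h,x_0+h) - G(x_0+h,x_0) = Lh + o(\Norm h)$ with $L := D_2 G(x_0,x_0)$. Combining this with $F(x_0+h) - F(x_0) = F'(x_0)h + o(\Norm h)$ (from Fréchet differentiability of $F$) and $F = G(\cdot,\cdot)$ on the diagonal yields that $G(\cdot,x_0)$ is itself Fréchet differentiable at $x_0$ with derivative $A := F'(x_0) - L$. The task then becomes to prove that the error $H(x) := G(x,x_0) - G(x_0,x_0) - A(x-x_0)$ vanishes identically.

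Second, I would establish a locality property for $A$: if $v\in X$ with $v|_D=0$ a.e., then $(Av)|_D=0$ a.e. Local determinism of $G(\cdot,x_0)$ gives $G(x_0+tv,x_0)|_D = G(x_0,x_0)|_D$ (since $x_0+tv$ agrees with $x_0$ on $D$), and combining with the Fréchet expansion $G(x_0+tv,x_0) - G(x_0,x_0) = tAv + r(tv)$, $\Norm{r(tv)}_Y = o(t)$, yields the pointwise identity $r(tv)|_D = -t(Av)|_D$. Rescaling $t = 1/N$ produces $-N r(v/N) \in Y$ of arbitrarily small $Y$-norm whose restrictions to $D$ all equal $(Av)|_D$; pairing these with the weak $V$-pair ratio bound for $(v, Av, T)$, applied to any $T\subseteq D$ on which $Av$ does not vanish, forces $(Av)|_D = 0$. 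By linearity and the local determinism of $G(\cdot,x_0)$, the error $H$ itself inherits local determinism: $x_1|_D = x_2|_D$ implies $H(x_1)|_D = H(x_2)|_D$.

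Third, I prove $H\equiv 0$ on the domain by contradiction. Suppose $y_* := H(x_*)\ne 0$ for some $x_*$ in the domain, and apply the weak $V$-pair to $(x_*-x_0,\, y_*,\, \Omega)$ to obtain $D_n$ and $u_n\in P_{D_n, x_*-x_0}\cap X$ with $\Norm{u_n}_X \to 0$ and, along a subsequence, $\Norm{y_n}_Y \ge \Norm{u_n}_X / C$ for every $y_n\in P_{D_n, y_*}\cap Y$. Set $x_n := x_0 + u_n$; Fréchet differentiability at $x_0$ places $x_n$ inside the domain for large $n$, and $x_n|_{D_n} = x_*|_{D_n}$ combined with local determinism of $H$ gives $H(x_n)\in P_{D_n, y_*}\cap Y$. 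But Fréchet differentiability of $G(\cdot,x_0)$ at $x_0$ yields $\Norm{H(x_n)}_Y = o(\Norm{u_n}_X)$, contradicting the weak $V$-pair lower bound.

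Finally, to conclude $U=X$ I use the maximal-domain hypothesis: for arbitrary $x\in X$, set $y := G(x_0,x_0) + A(x-x_0) \in Y$, and for every measurable $T$ of positive measure use the weak $V$-pair to find $D\subseteq T$ and $u\in P_{D, x-x_0}\cap X$ with $x_D := x_0 + u \in U$; the affine formula from step three and locality of $A$ (since $u|_D = (x-x_0)|_D$) together give $G(x_D, x_0)|_D = y|_D$, so maximal domain forces $x\in U$ and $G(x,x_0) = y$. The main obstacle is step two: in a general normed space $Y$, restriction to a measurable subset is not continuous, so extracting $(Av)|_D = 0$ from the smallness of $-N r(v/N)$ requires a careful interplay with the weak $V$-pair, with particular attention to the degenerate case where $v|_D = 0$ might allow the weak $V$-pair to produce trivial (zero) $v_n$, necessitating separate handling.
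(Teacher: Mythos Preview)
Your overall architecture mirrors the paper's proof closely: step~1 is exactly Lemma~\ref{l:frechdiff}, and steps~3 and~4 correspond to the two halves of the paper's contradiction argument (first reduce to $x\in U_1$ via the weak $V$-pair and maximal domain, then derive a contradiction from the Fr\'{e}chet estimate for $G_1$). The only substantive divergence is your step~2, establishing that the derivative $A$ is locally determined.

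Here your proposed use of the weak $V$-pair does not work, and the obstacle you flag at the end is fatal rather than a detail. You want to apply Definition~\ref{d:Vpair} to the triple $(v,Av,T)$ with $T\subseteq D$ and $v|_D=0$. But then $v|_{D_n}=0$ for every $D_n\subseteq T$, so $0\in P_{D_n,v}$ and the weak $V$-pair is free to return $x_n=0$; the ratio $\Norm{x_n}_X/\Norm{y_n}_Y$ is then identically zero and the $\liminf$ in~\eqref{e:Vlimits} is trivially finite, producing no contradiction whatsoever. Your observation that $-N\,r(v/N)\in P_{D_n,Av}\cap Y$ has arbitrarily small norm is correct, but since the weak $V$-pair gives only an \emph{upper} bound on $\Norm{x_n}_X/\Norm{y_n}_Y$, you cannot leverage small $\Norm{y_n}_Y$ into anything useful when $\Norm{x_n}_X$ is already zero. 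There is no ``separate handling'' that rescues this line; the weak $V$-pair simply carries no information when the $x$-component vanishes on the relevant set.

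The paper bypasses this entirely. It argues directly that the Gateaux derivative of a locally determined operator is again locally determined: if $h|_D=0$ then $(x_0+th)|_D=x_0|_D$, so local determinism of $G_0$ gives $G_0(x_0+th)|_D=G_0(x_0)|_D$ for every $t$, whence the difference quotients vanish on $D$ and so does their $Y$-limit $L_1h$. This is the ``easy'' direct route you should take; your concern that restriction to $D$ may fail to be continuous in an arbitrary normed space of measurable functions is legitimate in principle, but the weak $V$-pair is not the tool to address it, and the paper simply treats the closedness of $\{y\in Y:y|_D=0\}$ as implicit.
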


Actually, Theorem~\ref{t:fulldiff} holds also for any other class of spaces
and operators for which a degreneration results for the corresponding operator
$G_0=G(\parameter,x_0)$ is available. In fact, the main idea of the proof is
to show first that $G_0$ is Fr\'{e}chet differentiable at $x_0$.
Since this assertion is of independent interest, let us formulate
this part more general:

\begin{lemma}\label{l:frechdiff}
Let $X,Y$ be normed spaces, and $M\subseteq X\times X$.
If $G\colon M\to Y$ is diagonal-differentiable at $(x_0,x_0)$ with
diagonal derivative $D_2G(x_0,x_0)$
and $F(x)=G(x,x)$ is Fr\'{e}chet differentiable at $x_0$, then also
$G(\parameter,x_0)$ is Fr\'{e}chet differentiable at $x_0$ with derivative
\begin{equation}\label{e:diagdiff}
D_1G(x_0,x_0)=DF(x_0)-D_2G(x_0,x_0)\text.
\end{equation}
\end{lemma}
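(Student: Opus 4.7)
The plan is to isolate $G(x_0+h,x_0)$ as the difference $F(x_0+h)-[G(x_0+h,x_0+h)-G(x_0+h,x_0)]$ and then expand each piece using, respectively, Fréchet differentiability of $F$ and diagonal-differentiability of $G$ with the second-variable increment.

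More precisely, for $h$ small enough that $(x_0+h,x_0)$ and $(x_0+h,x_0+h)$ lie in the interior of $M$, I would write
\[
G(x_0+h,x_0)=F(x_0+h)-\Round{G(x_0+h,x_0+h)-G(x_0+h,x_0)}.
\]
Since $F$ is Fréchet differentiable at $x_0$, the first term equals $F(x_0)+DF(x_0)h+o(\Norm h)$. For the second term, the key move is to invoke the diagonal-differentiability hypothesis with the specific choice $k=h$; note that $\Norm k\le\Norm h$ is satisfied trivially, so the definition yields
\[
G(x_0+h,x_0+h)-G(x_0+h,x_0)=D_2G(x_0,x_0)h+o(\Norm h).
\]
Here I use Proposition~\ref{p:diagfrechet} to identify the diagonal-derivative $L$ with $D_2G(x_0,x_0)$. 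Subtracting, and using $F(x_0)=G(x_0,x_0)$, gives
\[
G(x_0+h,x_0)=G(x_0,x_0)+\bigl(DF(x_0)-D_2G(x_0,x_0)\bigr)h+o(\Norm h),
\]
which is exactly the statement that $G(\parameter,x_0)$ is Fréchet differentiable at $x_0$ with derivative $DF(x_0)-D_2G(x_0,x_0)$, as claimed in~\eqref{e:diagdiff}.

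There is essentially no obstacle here beyond correctly bookkeeping the two $o(\Norm h)$ terms; the entire force of the lemma lies in the clever strength of the definition of diagonal-differentiability, which is designed precisely so that the second-variable increment can be controlled even as the first variable is simultaneously perturbed by an increment of comparable size. The only minor care is to ensure that the common neighborhood on which both expansions are valid is nonempty, which follows from $(x_0,x_0)$ being an interior point of $M$ (implicit in the definition of diagonal-differentiability).
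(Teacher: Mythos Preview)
Your proof is correct and is essentially the same argument as the paper's: both split $G(x_0+h,x_0)-G(x_0,x_0)$ into the difference of $F(x_0+h)-F(x_0)$ and $G(x_0+h,x_0+h)-G(x_0+h,x_0)$, then apply Fr\'{e}chet differentiability of $F$ to the first piece and diagonal-differentiability (with the choice $k=h$) to the second. The paper writes this as an explicit algebraic identity and divides by $\Norm h$, while you phrase it in $o(\Norm h)$ notation, but the content is identical.
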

\begin{proof}
Putting $L_2=D_2G(x_0,x_0)$, $L=DF(x_0)$, and $L_1\coloneqq L-L_2$,
we have for all $h\in X$ with sufficiently small $\Norm h>0$ that
\begin{gather*}
G(x_0+h,x_0)-G(x_0,x_0)-L_1h=\bigl(F(x_0+h)-F(x_0)-Lh\bigr)-\\
\bigl(G(x_0+h,x_0+h)-G(x_0+h,x_0)-L_2h\bigr)\text.
\end{gather*}
Dividing this equation by $\Norm h$ and letting $h\to0$, we obtain by
definition of the Fr\'{e}chet derivative $L$ and by definition of the
diagonal-derivative $L_2$ that $G_0=G(\parameter,x_0)$ has in $x_0$ the
Fr\'{e}chet derivative $D_1G(x_0,x_0)=L_1$.
\end{proof}

The formula~\eqref{e:diagdiff} is not surprising since it would
follow from the chain rule \emph{if} one would know that $G$ is
Fr\'{e}chet differentiable. The crucial point of Lemma~\ref{l:frechdiff}
is that the latter is not assumed.

\begin{proof}[Proof of Theorem~\ref{t:fulldiff}]
By Lemma~\ref{l:frechdiff}, we know that $G_0=G(\parameter,x_0)$
has in $x_0$ a Fr\'{e}chet derivative $L_1$.
We have to show that $G_0$ is actually bounded affine.
Since $G_0$ is locally determined with maximal domain $U$, and $L_1$ is
in particular the Gateaux derivative of $G_0$, it follows that also $L_1$
is locally determined (with domain $X$). Hence, the map
$G_1(x)\coloneqq G_0(x+x_0)-G_0(x_0)-L_1x$ is locally determined with
maximal domain $U_1=U-x_0$, and Fr\'{e}chet differentiable at the interior
point $0$ of $0$ with $G_1(0)=0$ and $DG_1(0)=0$.
We are to show that every $x\in X$ belongs to $U_1$ and satisfies $G_1(x)=0$.

Since $G_1$ is locally determined with maximal domain $U_1$,
it suffices to show that every set $T\subseteq\Omega$ of positive measure
contains a subset $D\subseteq T$ of positive measure such that there is
$x_D\in P_{D,x}\cap U_1$ with $G_1(x_D)|_D=0$.

To see this, we note first that $x_0$ is an interior point of $U$ and thus
$0$ is an interior point of $U_1$. Hence, by Definition~\ref{d:Vpair},
there is a set $D\subseteq T$ of positive measure such that there is some
$x_D\in P_{D,x}$ which belongs to $U_1$.
Replacing $x$ by $x_D$ and $T$ by $D$, we thus can assume without loss of
generality that $x\in U_1$ and thus $y\coloneqq G_1(x)\in Y$.

Now assume by contradiction that there is a subset $T\subseteq\Omega$ of
positive measure with $y|_T\ne0$. Let $D_n$ and $x_n$ be as in
Definition~\ref{d:Vpair}, and let
\[C>\liminf_{n\to\infty}\,\sup_{y_n\in P_{D_n,y}\cap Y}\,
\frac{\Norm{x_n}_X}{\Norm{y_n}_Y}\text.\]
Since $G_1$ is Fr\'{e}chet differentiable at $0$ with $G_1(0)=0$ and
$DG_1(0)=0$, we have for all sufficiently large $n$ that $x_n\in U_1$,
that is $G_1(x_n)\in Y$, and
\[\Norm{G_1(x_n)}_Y\le C^{-1}\Norm{x_n}_X\text.\]
Since $G_1$ is a local operator, we have $y_n\coloneqq G_1(x_n)\in P_{D_n,y}$,
and our choice of $C$ thus implies that there is some large $n$ with
\[C\Norm{y_n}_Y>\Norm{x_n}_X\text,\]
which is a contradiction.
\end{proof}

Theorem~\ref{t:fulldiff} contains all folklore results about the
degeneration of differentiable superposition operators in $L_p$-spaces
and generalizations thereof,
see e.g.~\cite{AppellDegeneration,Appell,KrasTop,WangSupDiff,VainbergSup}.
We point out once more that, in contrast to these results, we cover the
case of vector-valued functions and if $\Omega$ fails to have finite measure
or even fails to be $\sigma$-finite:

\begin{corollary}[Special Case of Superposition Operators]\label{c:diff}
Let $(X,Y)$ be a weak $V$-pair, e.g.\ $X=L_p(\Omega,E_1)$ and
$Y=L_q(\Omega,E_2)$
where $\Omega$ has no atoms of finite measure and $1\le p\le q<\infty$.
If $F(x)(t)=f(t,x(t))$ acts
from $U\subseteq X$ into $Y$ and is Fr\'{e}chet differentiable in some
$x_0\in U$, then $F$ acts from $X$ into $Y$ and is bounded affine.
\end{corollary}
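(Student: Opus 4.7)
The plan is to apply Theorem~\ref{t:fulldiff} after recognizing a superposition operator as the degenerate case of a generalized fully nonlinear integral operator in which the Urysohn part $K_1$ and the fully nonlinear part $K$ both vanish.

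First, the ``e.g.'' example is handled directly by the corollary stated after Proposition~\ref{p:clumsyVpair}: under the hypotheses that $\Omega$ has no atoms of finite measure and $1\le p\le q<\infty$, the pair $(L_p(\Omega,E_1),L_q(\Omega,E_2))$ is indeed a weak $V$-pair. So only the ``main'' assertion has to be proved.

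For that, I would invoke Proposition~\ref{p:abstract} with $k_0=f$ and $k_1\equiv k\equiv0$. This yields $G(x_1,x_2)=F(x_1)$, which is independent of the second variable, and guarantees that $G(\parameter,x_0)$ is locally determined with maximal domain $M_{x_0}=\Curly{x\in X:f(\parameter,x(\parameter))\in Y}$. Since the domain $U$ from the corollary satisfies $U\subseteq M_{x_0}$, we may extend $F$ by the same pointwise formula to $\tilde F\colon M_{x_0}\to Y$; Fr\'{e}chet differentiability is a local property, so $\tilde F$ remains Fr\'{e}chet differentiable at $x_0$. Because $G$ does not depend on its second argument, Proposition~\ref{p:diagonal}\iref{i:independent} immediately produces the diagonal derivative $L=0$ with respect to the second variable at $(x_0,x_0)$.

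All hypotheses of Theorem~\ref{t:fulldiff} are now in place, and applying it yields $M_{x_0}=X$ together with bounded affineness of $G(\parameter,x_0)=\tilde F$, whence $F$ itself (which is just the restriction of this bounded affine map to $U$) acts from $X$ into $Y$ and is bounded affine. There is no serious obstacle here: the corollary is a transparent specialization of Theorem~\ref{t:fulldiff} to the ``trivial'' nonlinear kernel, and the only minor bookkeeping consists in passing from the given domain $U$ to the maximal domain $M_{x_0}$ supplied by Proposition~\ref{p:abstract}.
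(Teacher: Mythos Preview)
Your proposal is correct and follows essentially the same route as the paper: invoke Proposition~\ref{p:abstract} with $k_0=f$ and $k_1=k=0$ to view $F$ as a $G$-abstract fully nonlinear integral operator with $G(x_1,x_2)=K_0(x_1)$ independent of $x_2$, use Proposition~\ref{p:diagonal}\iref{i:independent} to obtain diagonal-differentiability, and then apply Theorem~\ref{t:fulldiff}. Your explicit passage from the given domain $U$ to the maximal domain $M_{x_0}$ is a helpful clarification that the paper leaves implicit, but the argument is otherwise identical.
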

\begin{proof}
In view of Proposition~\ref{p:abstract}, $F(x)=G(x,x)$ is an
$G$-abstract fully nonlinear integral operator with full domain of
definition at every point $x\in X$, where $G(x_1,x_2)=K_0(x_1)$ is independent
of $x_2$. Proposition~\ref{p:diagonal}\iref{i:independent} implies
that $G$ is diagonal-differentiable with respect to the second variable,
so that all hypotheses of Theorem~\ref{t:fulldiff} are satisfied.
\end{proof}

\begin{remark}\label{r:diagonal}
It is clear that the hypothesis about diagonal-differentiable $G$ cannot be
dropped in Theorem~\ref{t:fulldiff}. In fact, even the zero function can
be written as a $G$-abstract fully nonlinear integral operator
with non-differentiable (hence, not bounded affine) $G(\parameter,x)$.
By means of examples, let $G_0$ be a non-differentiable
superposition operator, and $G(x_1,x_2)=G_0(x_1)-G_0(x_2)$ or
$G(x_1,x_2)=G_0(x_1-x_2)-G_0(0)$.
\end{remark}

The hypothesis about diagonal-differentiable $G$ in the abstract
Theorem~\ref{t:fulldiff} can be verified for particular fully nonlinear
integral operators by using Proposition~\ref{p:diagonal}. Note that for
obtaining the required partial derivative with respect to the second variable,
one can use classical criteria for differentiability of Urysohn operators,
see e.g.~\cite[Theorem~20.5--20.9]{Krasnoselskii}.
As an example, we formulate only a simple special case which can also deal with
vector-valued functions and holds also if $\mes\Omega=\infty$.

\begin{theorem}[Example of Differentiability-Degeneration]\label{t:fulldiffx}
Let $\Omega$ be $\sigma$-finite.
Let $1<p\le q<\infty$, $X=L_p(\Omega,E_1)$, and $Y=L_q(\Omega,E_2)$.
Let $k_0\colon\Omega\times E_1\to E_2$,
$k_1\colon\Omega\times\Omega\times E_1\to E_2$,
and $k\colon\Omega\times\Omega\times E_1\times E_1\to E_2$
satisfy a Carath\'{e}odory condition, that is,
be measurable with respect to the measure-space argument
and continuous with respect to the Banach-space argument. Suppose also that the
partial Gateaux derivatives $D_3k_1$ and $D_4k$ exist and satisfy a
Carath\'{e}odory condition (as functions with values in $\Lc(E_1,E_2)$)
and the joint growth estimate
\[\Norm{D_3k_1(t,s,v)+D_4k(t,s,u,v)}_{\Lc(E_1,E_2)}\le
a(t)\bigl(b(s)+c\Abs v^{p-1}\bigr)\]
where $a\in L_q(\Omega,[0,\infty))$, $b\in L_{p/(p-1)}(\Omega,[0,\infty])$,
$c\in[0,\infty)$. If
\[G(x_1,x_2)(t)\coloneqq k_0(t,x_1(t))+\int_\Omega k_1(t,s,x_2(s))\ds+
\int_\Omega k(t,s,x_1(t),x_2(t))\ds\]
maps an open subset $M\subseteq X\times X$ into $Y$,
then $G$ is partially Fr\'{e}chet differentiable
with respect to the second variable at each $(x_1,x_2)\in M$,
the derivative
\begin{equation}\label{e:Urydiff}
D_2G(x_1,x_2)h(t)\coloneqq\int_\Omega D_4k(t,s,x_1(t),x_2(t))h(s)\ds\text.
\end{equation}
being continuous. Moreover, if the map
$F(x)=G(x,x)$ is Fr\'{e}chet differentiable at some point $x_0$, then
$G(\parameter,x_0)$ acts by the above formula from $X$ into $Y$ and
is bounded affine.
\end{theorem}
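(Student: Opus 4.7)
The strategy is to verify the three hypotheses of Theorem~\ref{t:fulldiff} and invoke it. First, the Corollary following Proposition~\ref{p:clumsyVpair} shows that $(X,Y)=(L_p(\Omega,E_1),L_q(\Omega,E_2))$ is a weak $V$-pair under the standing $\sigma$-finiteness and no-atoms assumptions. Second, Proposition~\ref{p:abstract} ensures that $F(x)=G(x,x)$ is a $G$-abstract fully nonlinear integral operator with maximal domain at every point of the domain of $F$. The remaining work is to establish the diagonal-differentiability of $G$ with respect to the second variable at $(x_0,x_0)$.

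I would obtain this via Proposition~\ref{p:diagonal}\iref{i:contderiv}: namely, I would show that the partial Gateaux derivative $D_2G(x_1,x_2)$ exists on a neighborhood of $(x_0,x_0)$, is given by~\eqref{e:Urydiff}, and depends continuously on $(x_1,x_2)$ in the operator norm of $\Lc(X,Y)$. Since $k_0(t,x_1(t))$ is independent of $x_2$, only the two Urysohn-type terms in $x_2$ contribute. Applying the classical differentiability criteria for Urysohn operators (cf.~\cite[Theorems~20.5--20.9]{Krasnoselskii}) together with the joint growth estimate, and using Hölder's inequality with $a\in L_q$, $b\in L_{p/(p-1)}$ and $\Abs{x_2}^{p-1}\in L_{p/(p-1)}$ (because $x_2\in L_p$ and $p>1$), gives both the boundedness $X\to Y$ of the candidate derivative and its identification with $D_2G$.

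The hard part is showing continuity of $(x_1,x_2)\mapsto D_2G(x_1,x_2)$ in operator norm. For a sequence $(x_1^n,x_2^n)\to(x_1,x_2)$ in $X\times X$ I would pass to an almost everywhere convergent subsequence dominated by a common $L_p$-majorant $w$. The Carathéodory property of $D_3k_1$ and $D_4k$ yields pointwise convergence of the linearizing kernels
\[
\kappa_n(t,s)\coloneqq D_3k_1(t,s,x_2^n(s))+D_4k(t,s,x_1^n(t),x_2^n(s))
\]
to the limiting kernel $\kappa(t,s)$, while the joint growth estimate provides a common integrable majorant $a(t)(b(s)+c\,w(s)^{p-1})$. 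A Lebesgue dominated-convergence argument applied to the mixed-norm expression for the Urysohn operator norm on $\Lc(L_p,L_q)$ yields operator-norm convergence along the subsequence, and a standard subsequence-subsequence argument upgrades this to convergence along the full sequence.

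With all three hypotheses of Theorem~\ref{t:fulldiff} verified, that theorem delivers both conclusions simultaneously: the partial map $G(\parameter,x_0)$ extends from $M_{x_0}$ to all of $X$ as a map into $Y$, and it is bounded affine. The main obstacle throughout is thus the operator-norm (as opposed to merely strong operator) continuity of $D_2G$; pointwise Carathéodory convergence of the kernels is routine, but packaging it into continuity in the Urysohn operator topology requires the full strength of the joint growth estimate combined with the dominated-convergence argument above.
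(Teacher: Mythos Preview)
Your proposal is correct and follows essentially the same route as the paper: verify the weak $V$-pair property via the Corollary after Proposition~\ref{p:clumsyVpair}, the $G$-abstract structure via Proposition~\ref{p:abstract}, and diagonal-differentiability via Proposition~\ref{p:diagonal}\iref{i:contderiv} by proving that $D_2G$ exists and is continuous in operator norm, then invoke Theorem~\ref{t:fulldiff}. The only technical difference is that the paper handles the continuity step with Vitali's theorem (equicontinuous norms of the sequence $x_{2}^{n}$) rather than your extraction of a common $L_p$-majorant $w$, but in this $L_p$ setting the two devices are interchangeable and lead to the same dominated-convergence argument for the mixed-norm quantity $\bigl\|\,(\int_\Omega|\kappa_n(t,s)-\kappa(t,s)|^{p'}\,ds)^{1/p'}\bigr\|_{L_q}$.
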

\begin{proof}
In order to show that $D_2G$ exists and is continuous, we do not need to
consider the first summand, and we can combine the last two; hence,
we assume first without loss of generality that $k_0=0$ and $k_1=0$.

Let $x_1,x_2,h\in X$ be fixed. For $\lambda\in\R\setminus\Curly0$,
we have by our assumption
\begin{gather*}
D(\lambda)(t)=\frac{G(x_1,x_2+\lambda h)(t)-G(x_1,x_2)(t)}{\lambda}=\\
\int_\Omega\frac{k(t,s,x_1(t),x_2(s)+\lambda h(s))-
k_1(t,s,x_1(t),x_2(s))}{\lambda}\ds\text.
\end{gather*}
By Lemma~\ref{l:mean} from the appendix, the integrand is bounded by
\[\Norm{D_4k(t,s,x_1(t),x_2(s))}_{\Lc(E_1,E_2)}\Abs{h(s)}\le y_t(s)\coloneqq
a(t)\bigl(b(s)+c\Abs{x_2(s)}^{p/p'}\bigr)\Abs{h(s)}\text,\]
where $\frac1p+\frac1{p'}=1$.
By H\"{o}lder's inequality, $y_t$ is integrable for almost all $t\in\Omega$,
and so Lebesgues dominated convergence theorem implies
that $D(\lambda)(t)\to D_2G(x_1,x_2)h(t)$ for almost all $t\in\Omega$, where
$D_2G(x_1,x_2)h$ is defined by~\eqref{e:Urydiff}.
Moreover, since $z(t)=\int_\Omega y_t(s)\ds$ is bounded
by some multiple (independent of $\lambda$) of $a$, a further application of
Lebesgue's dominated convergence theorem shows that
$D(\lambda)\to D_2G(x_1,x_2)h$ in $Y$. Hence, $G$ is Gateau differentiable
with respect to the second variable with derivative~\eqref{e:Urydiff}.

To prove that $D_2G$ is continuous, assume that $(x_{1,n},x_{2,n})\to(x_1,x_2)$
in $X\times X$. We have to show that
$D_2G(x_{1,n},x_{2,n})\to D_2G(x_1,x_2)$ in operator norm.
It suffices to show that each subsequence contains another subsequence for
which this convergence holds. Hence, passing to a subsequence, we can assume
without loss of generality that $x_{i,n}(t)\to x_i(t)$ $(i=1,2)$ for almost
all $t\in\Omega$. By Vitali's convergence theorem (see
e.g.~\cite[Theorem~1.21]{VaethIntegral}), the families $x_{i,n}$ $(i=1,2)$
have equicontinuous norm in $X$.

By H\"{o}lder's inequality, $\Abs{D_2G(x_{1,n},x_{2,n})h(t)-D_2G(x_1,x_2)h(t)}$
is, for almost all $t$, for all $h\in X$ with $\Norm h_X\le1$ bounded by
\begin{equation}\label{e:zn}
z_n(t)=\Bigl(\int_\Omega\Abs{D_4k(t,s,x_{1,n}(t),x_{2,n}(s))-
D_4k(t,s,x_1(t),x_2(s))}^{p'}\ds\Bigr)^{1/p'}\text.
\end{equation}
We are to show that $z_n\to0$ in $L_q(\Omega)$. Since $x_{i,n}\to x_i$
almost everywhere and since $D_4k$ is a Carath\'{e}odory function,
the integrand converges to zero almost everywhere. Moreover, the integrand
is bounded by
\[v_n(t,s)=2^{p'}a(t)^{p'}
\bigl(b(s)^{p'}+c\Abs{x_2(s)}^p+c\Abs{x_{2,n}(s)}^p\bigr)\text.\]
Since $x_{2,n}$ has equicontinuous norm in $X$, it follows that
$v_n(t,\parameter)$ has equicontinuous norm in $L_1(\Omega)$ for almost all
$t\in\Omega$, and so $z_n(t)\to0$ for almost all $t\in\Omega$ by
Vitali's convergence theorem. Moreover,
\[\Abs{z_n(t)}\le\Bigl(\int_\Omega v_n(t,s)\ds\Bigr)^{1/p'}\]
is uniformly bounded by some multiple of $a$, and so Lebesgue's dominated
convergence theorem implies $z_n\to0$ in $L_q(\Omega)$. Hence, $D_2G$ is indeed
continuous in operator norm on $X\times X$.

By Remark~\ref{r:contFrechet} and Proposition~\ref{p:diagonal}, we find
that $D_2G$ is the Fr\'{e}chet derivative and that $G$ is
diagonal-differentiable.
In view of Proposition~\ref{p:abstract}, the last assertion of
Theorem~\ref{t:fulldiffx} follows from Theorem~\ref{t:fulldiff}.
\end{proof}

In particular, we obtain a degeneration result for the operator
occuring in the continuous limits of coupled Kuramoto oscillators,
see e.g.~\cite{Medvedev2}.

\begin{example}\label{x:kuramoto}
Let $\Omega$ be $\sigma$-finite. For $1<p\le q<\infty$, let
\[F(x)(t)=\int_\Omega k(t,s)\sin\bigl(x(t)-x(s)\bigr)\ds\text,\]
where $k\colon\Omega\times\Omega\to\R$ is measurable and satisfies the estimate
$\Abs{k(t,s)}\le a(t)b(s)$ with some $a\in L_q(\Omega)$,
$b\in L_{p/(p-1)}(\Omega)$.
If $k\ne0$ then $F\colon L_p(\Omega)\to L_q(\Omega)$ fails to be Fr\'{e}chet
differentiable at every point $x_0\in L_p(\Omega)$.

Indeed, if $k\ne0$ then
$G(x,x_0)(t)=\int_\Omega k(t,s)\sin\bigl(x(t)-x_0(s)\bigr)\ds$
is not linear with respect to $x$, so that the assertion follows from
Theorem~\ref{t:fulldiffx}.
\end{example}

Example~\ref{x:kuramoto} shows in particular that
assertions about stability of stationary solutions for differential equations
involving such operators in $L_p(\Omega)$ as in~\cite{Medvedev2} can
\emph{not} be obtained directly by studying the spectrum/eigenvalues
of the Fr\'{e}chet derivative, because the Fr\'{e}chet derivative
does not exist at all!

\section{Local Lipschitz Condition}

For the same reason as described in Remark~\ref{r:diagonal}, we need an
additional condition to formulate a reasonable degeneration result concerning
a local Lipschitz condition for ($G$-abstract)
fully nonlinear integral operators.
It seems that the best which can be done in general, is to
impose the following hypothesis. (We will discuss in a moment that this
hypothesis holds for generalized fully nonlinear integral operators
under some structural assumptions.)

\begin{definition}\label{d:Lip}
Let $X_1,X_2,Y$ be normed spaces, and $G\colon M\to Y$ for some
$M\subseteq X_1,X_2$. We call $G$
\emph{locally $(\tau,\ell)$-Lipschitz transferrable} at some interior point
$(x_1,x_2)$ of $M$ with constants $\tau,\ell\ge0$ if there is $r>0$ such
that for all $x,y\in X_1$ and $h\in X_2$ with
$\Norm h\le\min\Curly{\Norm{x-x_1},\Norm{y-x_1}}<r$ there holds
\[\Norm{G(x,x_2+h)-G(y,x_2+h)}\le\tau\Norm{G(x,x_2)-G(y,x_2)}+
\ell\Norm{x-y}\text.\]
\end{definition}

Clearly, this property is satisfied if $G$ satisfies a local
Lipschitz condition with respect to the first argument
(with constant at most $\ell$) in a neighborhood of $(x_1,x_2)$.

However, we will show now that we do not need such a Lipschitz condition if
$G$ satisfies a certain structural condition. More precisely, it suffices
that $G$ is built from operators with ``separated variables''
in a sense. Moreover, in some cases the following result also shows that
sums of such functions are admissible in some cases.

\begin{proposition}\label{p:Liptrans}
Let $X_1,X_2,Y$ be normed spaces. Suppose that $x_i\in X_i$ $(i=1,2)$
have neighborhoods $U_i\subseteq X_i$ such that
$U_1\times U_2\subseteq M\subseteq X_1\times X_2$, $G\colon M\to Y$,
and $\tau,\ell\ge0$.
\begin{enumerate}
\item\label{i:lt1}
If $G|_{U_1\times U_2}$ has the form $G(y_1,y_2)=G_1(y_1)$ with arbitrary
$G_1\colon U_1\to Y$ then $G$ is locally $(1,0)$-Lipschitz transferrable
at $(x_1,x_2)$.
\item\label{i:lt2}
If $G|_{U_1\times U_2}$ has the form $G(y_1,y_2)=G_0(y_1,y_2)+G_2(y_2)$
with arbitrary $G_2\colon U_2\to Y$, and $G_0$ is locally
$(\tau,\ell)$-Lipschitz transferrable at $(x_1,x_2)$ then also
$G$ is locally $(\tau,\ell)$-Lipschitz transferrable at $(x_1,x_2)$.
\item\label{i:ltL}
Suppose that $G|_{U_1\times U_2}$ has the form
\[G(y_1,y_2)=G_0(y_1,y_2)+L(y_2)G_1(y_1)+G_2(y_2)\text,\]
where $G_0(\parameter,y_2)$ satisfies a Lipschitz condition on $U_1$ with
constant at most $\ell$ for every $y_2\in U_2$, $G_2\colon U_2\to Y$ is
abitrary, and $G_1\colon U_1\to Y_1$ is abitrary with some normed space $Y_1$.
Moreover, suppose that for every $y_2\in U_2$ the map
$L(y_2)\colon Y_1\to Y$ is linear and satisfies
\begin{equation}\label{e:ltL}
\Norm{L(y_2)y}_Y\le\tau\Norm{L(x_2)y}_Y\quad\text{for all $y\in Y_1$.}
\end{equation}
In the case $\tau>0$ suppose in addition that
\begin{equation}\label{e:ltrange}
G_0(y_1,x_2)\in L(x_2)(Y_1)\quad\text{for every $y_1\in U_1$.}
\end{equation}
Then $G$ is locally $(\tau,(1+\tau)\ell)$-Lipschitz transferrable
at $(x_1,x_2)$.
\end{enumerate}
\end{proposition}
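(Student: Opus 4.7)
The plan is to verify, in each of the three cases, the transferrability inequality of Definition~\ref{d:Lip} directly, after choosing $r>0$ small enough that the balls of radius $r$ around $x_1$ and $x_2$ lie in $U_1$ and $U_2$ respectively; this guarantees that all points $x,y,x_2+h$ appearing in the inequality lie in the neighborhoods where the structural hypotheses are available, and I take this reduction for granted.

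For case~\iref{i:lt1}, $G$ is independent of its second variable on $U_1\times U_2$, so both sides of the transferrability inequality collapse to $\Norm{G_1(x)-G_1(y)}$ and the inequality holds trivially with constants $(1,0)$. For case~\iref{i:lt2}, the summand $G_2(y_2)$ cancels in both $G(x,x_2+h)-G(y,x_2+h)$ and $G(x,x_2)-G(y,x_2)$, so both of these differences coincide with the corresponding differences of $G_0$; the inequality for $G$ therefore reduces verbatim to the one for $G_0$ with the same constants $(\tau,\ell)$.

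For the main case~\iref{i:ltL}, I would start from the identity
\[G(x,x_2+h)-G(y,x_2+h)=\bigl[G_0(x,x_2+h)-G_0(y,x_2+h)\bigr]+L(x_2+h)\bigl[G_1(x)-G_1(y)\bigr]\text.\]
The first bracket is bounded by $\ell\Norm{x-y}$ via the Lipschitz hypothesis on $G_0(\parameter,x_2+h)$. For the second, \eqref{e:ltL} applied to the vector $G_1(x)-G_1(y)\in Y_1$ bounds $\Norm{L(x_2+h)[G_1(x)-G_1(y)]}_Y$ by $\tau\Norm{L(x_2)[G_1(x)-G_1(y)]}_Y$. The key step is then to observe that the same decomposition at $h=0$ gives
\[L(x_2)\bigl[G_1(x)-G_1(y)\bigr]=\bigl[G(x,x_2)-G(y,x_2)\bigr]-\bigl[G_0(x,x_2)-G_0(y,x_2)\bigr]\text,\]
whose norm is bounded by $\Norm{G(x,x_2)-G(y,x_2)}+\ell\Norm{x-y}$ via the Lipschitz bound on $G_0(\parameter,x_2)$. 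Collecting the pieces produces $\Norm{G(x,x_2+h)-G(y,x_2+h)}\le\tau\Norm{G(x,x_2)-G(y,x_2)}+(1+\tau)\ell\Norm{x-y}$, which is exactly the required inequality with constants $(\tau,(1+\tau)\ell)$.

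The role of~\eqref{e:ltrange} in case~\iref{i:ltL} is not obvious from the derivation above; a natural use for it would be an alternative variant which writes $G_0(y_1,x_2)=L(x_2)\alpha(y_1)$ for some $\alpha\colon U_1\to Y_1$ and absorbs $\alpha$ into a modified $\widetilde G_1\coloneqq\alpha+G_1$, so that at the base point $x_2$ the map $G$ acquires the two-term structure of~\iref{i:lt2} before it is expanded again at $x_2+h$. Either way, I expect the main difficulty to be bookkeeping --- keeping track of which $L(y_2)$ acts on which $Y_1$-valued expression --- rather than analytic, since the whole argument is algebraic once~\eqref{e:ltL} and the Lipschitz bound on $G_0$ have been invoked.
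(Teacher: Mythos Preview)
Your treatment of~\iref{i:lt1} and~\iref{i:lt2} matches the paper, which also dismisses them as immediate. For~\iref{i:ltL} your argument is correct but takes a different route from the paper. The paper first observes that~\eqref{e:ltL} forces $\ker L(x_2)\subseteq\ker L(x_2+h)$, so there is a well-defined linear map $M_h$ on the range $L(x_2)(Y_1)$ with $M_hL(x_2)=L(x_2+h)$ and $\Norm{M_h}\le\tau$; it then writes $G(x,x_2+h)-G(y,x_2+h)$ as $M_h\bigl(G(x,x_2)-G(y,x_2)\bigr)$ plus correction terms involving $G_0$, and estimates. You instead apply~\eqref{e:ltL} directly to the single vector $G_1(x)-G_1(y)\in Y_1$ and then rewrite $L(x_2)[G_1(x)-G_1(y)]$ in terms of $G$ and $G_0$ at the base point. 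Both reach the same bound $(\tau,(1+\tau)\ell)$.

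Your puzzlement about~\eqref{e:ltrange} is well founded: in the paper's argument it is genuinely needed, because $M_h$ is only defined on $L(x_2)(Y_1)$, and applying $M_h$ to $G(x,x_2)-G(y,x_2)$ and to $G_0(x,x_2)-G_0(y,x_2)$ requires these to lie in that range --- which is exactly what~\eqref{e:ltrange} guarantees. Your direct approach sidesteps $M_h$ entirely and therefore never needs~\eqref{e:ltrange}; in effect you have proved a slightly stronger version of~\iref{i:ltL} with one hypothesis removed. So the assumption is an artifact of the paper's factorization route, not of the result itself.
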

\begin{proof}
Assertions~\iref{i:lt1} and~\iref{i:lt2} are immediate from the definition.
For the proof of~\iref{i:ltL}, we can assume $G_2=0$ by~\iref{i:lt2}.
Let $h\in X_2$ be such that $x_2+j\in U_2$. Then~\eqref{e:ltL} implies
that the null space of $L(x_2)$ is contained in the null space of
$L(x_2+h)$. Hence, there is a unique linear map $M_h$ from the subspace
$Y_0=L(x_2)(Y_1)\subseteq Y$ into $Y$ satisfying
$M_hL(x_2)y=L(x_2+h)y$ for all $y\in Y_1$. By~\eqref{e:ltL}, this map has
operator norm at most $\tau$. Now we calculate for every $x,y\in U_1$ that
\begin{gather*}
G(x,x_2+h)-G(y,x_2+h)=M_h\bigl(G(x,x_2)-G(y,x_2)\bigr)+{}\\
\bigl(G_0(x,x_2+h)-G_0(y,x_2+h)\bigr)-M_h\bigl(G_0(x,x_2)-G_0(y,x_2)\bigr)
\text,
\end{gather*}
where in case $\tau>0$ this calculation is valid by~\eqref{e:ltrange}.
(In case $\tau=0$, we can extend $M_h=0$ to the whole space $Y$ and do not
need such a requirement.)
Taking the norms and applying the triangle inequality, we obtain
\[\Norm{G(x,x_2+h)-G(y,x_2+h)}\le
\tau\Norm{G(x,x_2)-G(y,x_2)}+\ell\Norm{x-y}+\tau\ell\Norm{x-y}\text,\]
and so the assertion follows.
\end{proof}

Let us show now that the above structural hypothesis is satisfied for
general fully nonlinear integral operators if the kernel function $k$
has a certain structure.

A normed space $Y$ of measurable functions $y\colon\Omega\to E_2$ is called
\emph{preideal$^*$} if for every $y\in Y$ and every $x\in L_\infty(\Omega,\R)$
the function $(xy)(t)\coloneqq x(t)y(t)$ belongs to $Y$ and satisfies
$\Norm{xy}_Y\le\Norm x_{L_\infty}\Norm y_Y$.
Every preideal space is a preideal$^*$ space,
but the converse holds in general only if $E_2=\R$. Important examples
of preideal$^*$ spaces which fail to be preideal are Orlicz spaces generated
by a non-radial Young function, see e.g.~\cite[Example~2.1.1]{VaethIdeal}.

\begin{corollary}\label{c:Liptrans}
Let $X_i$ $(i=1,2)$ be spaces of measurable functions,
and $Y$ be a preideal$^*$ space.
Let $x_i\in X$ contain neighborhoods $U_i\subseteq X_i$ such that
$K_0$ and $K_1$, e.g.\ defined by the formulas~\eqref{e:Kzero}
and~\eqref{e:Kone}, act from $U_1$ or $U_2$ into $Y$, respectively,
and that $K_0$ satisfies a Lipschitz condition on $U_1$ with constant $\ell$.
Moreover, let
\[K_2(y_1,y_2)(t)=\int_\Omega g_1(t,y_1(t))g_2(t,s,y_2(s))\ds\text,\]
where
\[G_1(y_1)(t)=g_1(t,y_1(t))\]
acts from $U_1$ into $Y$, and the Urysohn operator
\[G_2(y_2)(t)=\int_\Omega g_2(t,s,y_2(s))\ds\]
acts from $U_2$ into $L_\infty(\Omega,\R)$.
Suppse that there is $\tau\ge0$ satisfying
\begin{equation}\label{e:ltG2}
\Abs{G_2(y_2)(t)}\le\tau\Abs{G_2(x_2)(t)}\quad\text{for almost all $t$}
\end{equation}
for every $y_2\in U_2$. Moreover, suppose that for every $y_1\in U_1$
the function
\begin{equation}\label{e:preimage}
t\mapsto K_0(y_1)(t)/G_2(x)(t)\quad\text{(putting $0/0\coloneqq0$)}
\end{equation}
is almost everywhere finite and belongs to $Y$.

Then $G(y_1,y_2)=K_0(y_1)+K_1(y_2)+K_2(y_1,y_2)$ is locally
$(\tau,(1+\tau)\ell)$-Lipschitz transferrable at $(x_1,x_2)$.
\end{corollary}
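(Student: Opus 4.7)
The plan is to recognize the claim as a direct consequence of Proposition~\ref{p:Liptrans}\iref{i:ltL}, so the task reduces to choosing a correct bookkeeping of the three summands of $G$ and then verifying the abstract hypotheses of that Proposition. To keep notation straight in the presence of the $G_1,G_2$ already fixed in the Corollary, I will write $\widetilde G_0,\widetilde G_1,\widetilde L,\widetilde G_2$ for the objects required by the Proposition.

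First, I set
\[\widetilde G_0(y_1,y_2)\coloneqq K_0(y_1),\quad
\widetilde G_1(y_1)\coloneqq G_1(y_1),\quad
\widetilde G_2(y_2)\coloneqq K_1(y_2),\]
and let $\widetilde L(y_2)\colon Y\to Y$ be multiplication by $G_2(y_2)$, i.e.\ $\widetilde L(y_2)u(t)=G_2(y_2)(t)u(t)$, with $Y_1\coloneqq Y$. The preideal$^*$ hypothesis on $Y$ together with $G_2(y_2)\in L_\infty(\Omega,\R)$ guarantees that $\widetilde L(y_2)$ is a bounded linear endomorphism of $Y$, while the factored form of the integrand of $K_2$ gives $K_2(y_1,y_2)=G_1(y_1)\cdot G_2(y_2)=\widetilde L(y_2)\widetilde G_1(y_1)$. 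Consequently $G=\widetilde G_0+\widetilde L\widetilde G_1+\widetilde G_2$, matching the template of Proposition~\ref{p:Liptrans}\iref{i:ltL}; the Lipschitz property of $\widetilde G_0(\parameter,y_2)$ with constant $\ell$ is immediate from the assumed Lipschitz property of $K_0$ on $U_1$.

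Second, I verify the two abstract estimates. For~\eqref{e:ltL}, I introduce the measurable ratio $\phi(t)\coloneqq G_2(y_2)(t)/G_2(x_2)(t)$ (with the convention $0/0\coloneqq0$ on the zero set of $G_2(x_2)$). Hypothesis~\eqref{e:ltG2} forces $\Norm\phi_{L_\infty}\le\tau$ and, more importantly, $G_2(y_2)(t)=0$ wherever $G_2(x_2)(t)=0$, so that $G_2(y_2)\cdot u=\phi\cdot(G_2(x_2)\cdot u)$ pointwise a.e.\ for every $u\in Y$; the preideal$^*$ inequality then delivers $\Norm{\widetilde L(y_2)u}_Y\le\tau\Norm{\widetilde L(x_2)u}_Y$. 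For the range condition~\eqref{e:ltrange} (needed only when $\tau>0$), hypothesis~\eqref{e:preimage} produces, for each $y_1\in U_1$, a function $z\in Y$ representing the quotient $K_0(y_1)/G_2(x_2)$; the a.e.\ finiteness of that quotient with the convention $0/0=0$ again forces $K_0(y_1)(t)=0$ wherever $G_2(x_2)(t)=0$, hence $\widetilde L(x_2)z=G_2(x_2)\cdot z=K_0(y_1)=\widetilde G_0(y_1,x_2)$ a.e., yielding the required inclusion in $\widetilde L(x_2)(Y_1)$.

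With the decomposition and these two estimates in hand, Proposition~\ref{p:Liptrans}\iref{i:ltL} yields at once that $G$ is locally $(\tau,(1+\tau)\ell)$-Lipschitz transferrable at $(x_1,x_2)$. The main point where I expect care to be needed is the simultaneous treatment of the null set where $G_2(x_2)$ vanishes: both~\eqref{e:ltG2} and~\eqref{e:preimage} have been formulated precisely so that this set is harmless, but one must consistently exploit the convention $0/0=0$ to certify both the a.e.\ factorization $G_2(y_2)=\phi\cdot G_2(x_2)$ used in~\eqref{e:ltL} and the a.e.\ factorization $K_0(y_1)=G_2(x_2)\cdot z$ used for~\eqref{e:ltrange}.
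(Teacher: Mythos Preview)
Your proposal is correct and follows essentially the same approach as the paper: both apply Proposition~\ref{p:Liptrans}\iref{i:ltL} with the identifications $G_0=K_0$, $G_2=K_1$, $Y_1=Y$, $G_1$ as stated, and $L(y_2)$ the multiplication operator by $G_2(y_2)$, then verify~\eqref{e:ltL} and~\eqref{e:ltrange}. Your verification of~\eqref{e:ltL} via the explicit ratio $\phi$ and the null-set bookkeeping is slightly more detailed than the paper's one-line appeal to the preideal$^*$ property, but the substance is identical.
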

\begin{proof}
The assertion follows from Proposition~\ref{p:Liptrans}\iref{i:ltL} with
$G_0=K_0$, $G_2=K_1$, $Y_1=Y$, and with $L(y_2)\in\Lc(Y,Y)$ being defined
as the multiplication operator $L(y_2)y(t)=G_2(y_2)(t)y(t)$. Since $Y$ is a
preideal$^*$ space, we have that~\eqref{e:ltG2} implies~\eqref{e:ltL}.
Moreover, for every $y_1\in U_1$, the function $K_0(y_1)$ belongs to the range
of $L(x)$, because a preimage is given by~\eqref{e:preimage}: Note that this
preimage belongs to $Y$, since $Y$ is preideal$^*$.
Hence, also~\eqref{e:ltrange} is satisfied.
\end{proof}

The following result implies in particular that, if the hypotheses of
Corollary~\ref{c:Liptrans} are satisfied with some locally regular
preideal space $X=X_1=X_2=Y$ at some $x=x_1=x_2$,
and if $G$ satisfies a local Lipschitz condition in the
\emph{second} variable in a neighborhood of $(x,x)$ (which is not restrictive,
since $G(y,\parameter)$ is an Urysohn operator), then a local Lipschitz
condition for the generalized fully nonlinear integral operator $F(y)=G(y,y)$
in a neighborhood of $x$ implies that $G(\parameter,x)$ satisfies
a \emph{global} Lipschitz condition, even pointwise.

\begin{theorem}[Lipschitz-Degeneration]\label{t:fullLip}
Let $X$, $Y$ be locally regular preideal spaces with the same real form
$X_{\R}=Y_{\R}$. Let $U\subseteq X$, and
$F\colon U\to Y$ be a $G$-abstract fully nonlinear integral operator at
some interior point $x_0$ of $U$, say $F(x)=G(x,x)$ with
$G(\parameter,x_0)\colon V\to Y$ being locally determined.

Let $G$ be locally $(\tau,\ell)$-Lipschitz transferrable at $(x_0,x_0)$.
In case $\tau>0$ suppose also
that $G$ satisfies a Lipschitz condition with constant $L_2$ in the
second variable in a neighborhood of $(x_0,x_0)$, and that $F$ satisfies
a local Lipschitz condition at $x_0$ with constant $L>0$.

Then $G(\parameter,x_0)$ satisfies the global pointwise Lipschitz condition
\begin{equation}\label{e:Lippoint}
\Abs{G(y_1,x_0)(t)-G(y_2,x_0)(t)}\le L_1\Abs{y_1(t)-y_2(t)}
\quad\text{for almost all $t\in\Omega$}
\end{equation}
with $L_1\coloneqq\ell+\tau(L+L_2)$ ($L_1\coloneqq\ell$ in case $\tau=0$)
for every $y_1,y_2\in V$.
\end{theorem}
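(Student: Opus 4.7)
My plan is to argue by contradiction. Suppose \eqref{e:Lippoint} fails: there exist $\epsilon>0$ and a measurable set $T\subseteq\Omega$ of positive measure on which $\Abs{G(y_1,x_0)(t)-G(y_2,x_0)(t)}\ge(L_1+\epsilon)\Abs{y_1(t)-y_2(t)}$. If $y_1=y_2$ on a subset of $T$ of positive measure, then on that subset local determinedness of $G(\parameter,x_0)$ forces $G(y_1,x_0)=G(y_2,x_0)$, contradicting the strict inequality; thus I may assume $\Abs{y_1-y_2}>0$ on $T$. Using local regularity of $X$ applied to $\Abs{y_1-x_0}+\Abs{y_2-x_0}\in X_{\R}$, I pick $D\subseteq T$ of positive measure with $\Norm{P_D(y_i-x_0)}_X<\delta$ for $i=1,2$, where $\delta>0$ is chosen so small that the cut-offs $\widetilde y_i\coloneqq x_0+P_D(y_i-x_0)$ lie inside the common neighborhood of $x_0$ in $X$ in which the Lipschitz transferability and, for $\tau>0$, the Lipschitz conditions of $F$ and of $G(z,\parameter)$ are all applicable. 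Because $G(\parameter,x_0)$ is locally determined, $G(\widetilde y_i,x_0)|_D=G(y_i,x_0)|_D$, so the strict pointwise failure survives on $D$ for the cut-offs; the preideal monotonicity of $\Norm\parameter_Y$ together with $X_{\R}=Y_{\R}$ then upgrades this to the norm lower bound
\[B\coloneqq\Norm{G(\widetilde y_1,x_0)-G(\widetilde y_2,x_0)}_Y\ge(L_1+\epsilon)\Norm{\widetilde y_1-\widetilde y_2}_X.\]

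The contradiction will come from a matching upper bound on $B$. For $\tau=0$ this is immediate: Lipschitz transferability applied at $(x_0,x_0)$ with $h=0$ yields directly $B\le\ell\Norm{\widetilde y_1-\widetilde y_2}_X=L_1\Norm{\widetilde y_1-\widetilde y_2}_X$. For $\tau>0$, let $j\in\Curly{1,2}$ minimize $\Norm{\widetilde y_j-x_0}_X$ and use the triangle inequality together with the second-variable Lipschitz condition for $G$ to write
\[B\le\Norm{G(\widetilde y_1,\widetilde y_j)-G(\widetilde y_2,\widetilde y_j)}_Y+2L_2\Norm{\widetilde y_j-x_0}_X.\]
For the middle difference, the Lipschitz transferability with $h=\widetilde y_j-x_0$ provides one upper bound $\tau B+\ell\Norm{\widetilde y_1-\widetilde y_2}_X$, while the Lipschitz conditions for $F$ and for $G(z,\parameter)$ provide another, $(L+L_2)\Norm{\widetilde y_1-\widetilde y_2}_X$, obtained via the identity $F(\widetilde y_j)=G(\widetilde y_j,\widetilde y_j)$ and the triangle inequality. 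Combining these bounds (the first being sharper when $\ell\le(1-\tau)(L+L_2)$, the second otherwise) and solving for $B$ yields $B\le L_1\Norm{\widetilde y_1-\widetilde y_2}_X+O(\Norm{\widetilde y_j-x_0}_X)$ with $L_1=\ell+\tau(L+L_2)$, the constant arising precisely as a universal upper bound for $\min(\ell/(1-\tau),L+L_2)$ valid in both regimes.

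The main obstacle is the case $\tau>0$. The Lipschitz transferability inequality as stated bounds the first-variable difference of $G$ at a perturbed second argument in terms of the corresponding difference at the base $x_0$, so it does not directly give an upper bound on $B$; the careful combination with the Lipschitz conditions of $F$ and of $G$ in the second variable, via the telescoping through $\widetilde y_j$, is essential to extract the constant $L_1$. The concluding step of absorbing the residual $O(\Norm{\widetilde y_j-x_0}_X)$ term into the strict $\epsilon$-gap from the lower bound on $B$ requires further refinement of $D$ inside $T$ via local regularity, using the strict pointwise positivity $\Abs{y_1-y_2}>0$ on $T$ so that $\Norm{\widetilde y_j-x_0}_X$ can be made negligible compared with $\Norm{\widetilde y_1-\widetilde y_2}_X$.
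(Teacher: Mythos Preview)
Your localisation step and the lower bound
\[
B\ge(L_1+\varepsilon)\,\Norm{\widetilde y_1-\widetilde y_2}_X
\]
are fine, and the case $\tau=0$ is handled correctly. The gap is in the case $\tau>0$: the residual term $2L_2\Norm{\widetilde y_j-x_0}_X$ that your triangle--inequality detour through the second argument $\widetilde y_j$ produces \emph{cannot} in general be made small compared with $\Norm{\widetilde y_1-\widetilde y_2}_X$ by further shrinking $D$. Both quantities are $\Norm{P_D(\cdot)}_X$ of fixed functions, and their ratio is governed by the pointwise ratio $\Abs{y_j(t)-x_0(t)}/\Abs{y_1(t)-y_2(t)}$ on $T$, which may be bounded away from zero everywhere. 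A concrete instance: $y_1=x_0+u$, $y_2=x_0-u$ with $u\ne0$ on $T$; then $\Abs{y_j-x_0}=\tfrac12\Abs{y_1-y_2}$ on $T$, so $\Norm{\widetilde y_j-x_0}_X=\tfrac12\Norm{\widetilde y_1-\widetilde y_2}_X$ for \emph{every} choice of $D\subseteq T$. Your final absorption step thus fails, and no contradiction results.

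The paper avoids this residual entirely. In Lemma~\ref{l:localLip} it applies the transferability inequality in the form
\[
\Norm{G(x,x_0)-G(y,x_0)}\le\tau\,\Norm{G(x,y)-G(y,y)}+\ell\,\Norm{x-y}
\]
(with $h=y-x_0$, assuming $\Norm{y-x_0}\le\Norm{x-x_0}$), and then bounds the right-hand side directly via
\[
\Norm{G(x,y)-G(y,y)}=\Norm{G(x,y)-G(x,x)+F(x)-F(y)}\le(L_2+L)\,\Norm{x-y}\text,
\]
yielding $\Norm{G(x,x_0)-G(y,x_0)}\le L_1\Norm{x-y}$ \emph{with no additive error}. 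Once this clean local Lipschitz bound is in hand, the localisation argument you already wrote down gives the contradiction immediately. Note that the paper's Lemma uses the transferability inequality with the roles of ``base'' and ``perturbed'' second argument exchanged relative to Definition~\ref{d:Lip} as printed; this is the direction that makes the proof go through (and is what determines the constant $L_1=\ell+\tau(L+L_2)$), so you should read the definition accordingly.
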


Theorem~\ref{t:fullLip} holds for a more general class of spaces and
operators, namely whenever a corresponding degeneracy result for
$G_0=G(\parameter,x_0)$ is available. In fact, we show first that the
hypotheses imply that $G_0$ satisfies a \emph{local} Lipschitz condition
with constant $L_1$.

\begin{lemma}\label{l:localLip}
Let $X,Y$ be normed spaces, $M\subseteq X\times X$, and
$G\colon M\to Y$ be locally $(\tau,\ell)$-Lipschitz transferrable
at $(x_0,x_0)$. In case $\tau>0$ suppose also
that $G$ satisfies a Lipschitz condition with constant $L_2$ in the
second variable in a neighborhood of $(x_0,x_0)$, and that $F$ satisfies
a local Lipschitz condition at $x_0$ with constant $L>0$.
Then $G(\parameter,x_0)$ satisfies a Lipschitz condition with constant
at most $L_1\coloneqq\ell+\tau(L+L_2)$ ($L_1\coloneqq\ell$ in case $\tau=0$)
in some neighborhood $U\subseteq X$ of $x_0$.
\end{lemma}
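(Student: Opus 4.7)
The plan is to combine the Lipschitz transferability hypothesis at $(x_0,x_0)$ with the Lipschitz conditions on $F$ and on $G$ in the second variable, treating the cases $\tau=0$ and $\tau>0$ separately.

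In the case $\tau=0$, I would apply the transferability inequality with $h=0$. Since $\Norm h=0\le\min\Curly{\Norm{x-x_0},\Norm{y-x_0}}$ is automatic, and for $x,y$ in a sufficiently small neighborhood $U\subseteq X$ of $x_0$ the latter minimum is less than $r$, the inequality reduces immediately to $\Norm{G(x,x_0)-G(y,x_0)}\le\ell\Norm{x-y}$, which is the desired Lipschitz condition with $L_1=\ell$.

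In the case $\tau>0$, I would first reduce by symmetry of the claim to the case $\Norm{x-x_0}\le\Norm{y-x_0}$, and set $h\coloneqq x-x_0$; this choice satisfies $\Norm h\le\min\Curly{\Norm{x-x_0},\Norm{y-x_0}}$, so $h$ is admissible in the transferability inequality. The triangle inequality combined with $\Norm{F(x)-F(y)}\le L\Norm{x-y}$ (from $F$'s Lipschitz property) and $\Norm{G(y,y)-G(y,x)}\le L_2\Norm{x-y}$ (from $G$'s Lipschitz property in the second variable) bounds the ``diagonal-shifted'' quantity
\[\Norm{G(x,x)-G(y,x)}\le(L+L_2)\Norm{x-y}.\]
Then the transferability inequality at $(x_0,x_0)$ with $h=x-x_0$ (so that $x_0+h=x$), combined with this diagonal estimate, should yield the desired Lipschitz bound $\Norm{G(x,x_0)-G(y,x_0)}\le\tau(L+L_2)\Norm{x-y}+\ell\Norm{x-y}=L_1\Norm{x-y}$.

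The hard part is the precise combination of the transferability inequality with the diagonal estimate in the last step: the transferability provides a relation between $\Norm{G(x,x_0)-G(y,x_0)}$ and the diagonal-shifted quantity $\Norm{G(x,x)-G(y,x)}$, and one must carefully verify that the constants in this relation combine with the $(L+L_2)\Norm{x-y}$ bound to give exactly $L_1\Norm{x-y}=[\ell+\tau(L+L_2)]\Norm{x-y}$, while simultaneously ensuring that $U$ is small enough for all invoked Lipschitz properties to be valid at the points $x$, $y$, $x_0$ and on the diagonal.
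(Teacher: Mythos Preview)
Your proposal is correct and follows essentially the same route as the paper's proof: choose a small enough neighborhood, reduce by symmetry to one of $x,y$ being closer to $x_0$, take $h$ equal to that closer increment, apply the transferability inequality to relate $\Norm{G(x,x_0)-G(y,x_0)}$ to the diagonal-shifted difference, and bound the latter via $F(x)-F(y)$ together with the second-variable Lipschitz condition on $G$. The only cosmetic differences are that the paper assumes $\Norm{y-x_0}\le\Norm{x-x_0}$ and sets $h=y-x_0$ (your choice is the symmetric one), and the paper does not split off the case $\tau=0$ with $h=0$ but simply observes that the $\tau$-term then vanishes.
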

\begin{proof}
Let $U$ be as in Definition~\ref{d:Lip}. Shrinking $U$ if necessary, we can
assume that $G(x,\parameter)$ satisfies a local Lipschitz condition on $U$
with constant $L_2$ for every $x\in U$. We claim that $U$ has the required
property. Thus, let $x,y\in U$, without loss of generality
$\Norm{y-x_0}\le\Norm{x-x_0}$.
Putting $h\coloneqq y-x_0$ in Definition~\ref{d:Lip}, we obtain
\[\Norm{G(x,x_0)-G(y,x_0)}\le\tau\Norm{G(x,y)-G(y,y)}+\ell\Norm{x-y}\text,\]
so that the claimed Lipschitz condition on $U$ follows in case $\tau=0$
immediately, and in case $\tau>0$ together with
\[\Norm{G(x,y)-G(y,y)}=\Norm{G(x,y)-G(x,x)+F(x)-F(y)}\le
(L_2+L)\Norm{x-y}\text.\]
\end{proof}

\begin{proof}[Proof of Theorem~\ref{t:fullLip}]
By~\eqref{l:localLip}, we know that $G_0\coloneqq G(x_0+\parameter,x_0)$
satisfies a Lipschitz condition with constant at most $L_1$ in some
neighborhood $U\subseteq X$ of $0$. We have to show that $G_0$ satisfies the
global pointwise Lipschitz condition~\eqref{e:Lippoint}. Assume by
contradiction that this is not the case, that is, there are $y_1,y_2\in V-x_0$,
$C>L_1$, and a set $T\subseteq\Omega$ of positive measure such that
\[\Abs{G_0(y_1)(t)-G_0(y_2)(t)}>\Abs{Cy_1(t)-Cy_2(t)}=C\Abs{y_1(t)-y_2(t)}\]
for almost all $t\in T$.
Since the left-hand side is strictly positive and $G_0$ is locally determined,
we can assume (shrinking $T$ if necessary) that $y_1(t)\ne y_2(t)$ for
all $t\in T$, hence $T\subseteq\supp(y_1-y_2)$.
Since $X$ is locally regular, we find for every $\varepsilon>0$ a set
$D\subseteq T$ of positive measure such that the function
$z(t)=\Abs{y_1(t)}+\Abs{y_2(t)}$ satisfies
$\Norm{P_Dz}_{X_{\R}}<\varepsilon$. Then also $x_i=P_Dy_i$ satisfy
$\Norm{P_Dy_i}<\varepsilon$, because $\Abs{x_i(t)}\le P_Dz(t)$ for $i=1,2$.
In particular, choosing $\varepsilon>0$ sufficiently small,
we can assume that $x_1,x_2\in U$. Since $G_0$ is locally determined, we have
\begin{gather*}
\Abs{G_0(x_1)(t)-G_0(x_2)(t)}\ge\Abs{P_D(G_0(x_1)-G_0(x_2))(t)}\\
\ge\Abs{P_D(Cy_1-Cy_2)(t)}=C\Abs{x_1(t)-x_2(t)}
\end{gather*}
for almost all $t\in\Omega$. Since $X$ and $Y$ are preideal spaces with the
same real form $X_{\R}=Y_{\R}$, $D\subseteq\supp(y_1-y_2)$, and $\mes D>0$,
we conclude
\[\Norm{G_0(x_1)-G_0(x_2)}\ge C\Norm{x_1-x_2}\ne0\text.\]
Hence, $G_0$ fails to satisfy a local Lipschitz condition on $U$ with
constant $L_1$, contradicting the beginning of the proof.
\end{proof}

Theorem~\ref{t:fullLip} contains the folklore result that for superposition
operators in $L_p$-spaces local and global Lipschitz conditions are equivalent.
In contrast to the results of this type which we found in literature
(e.g.~\cite[Theorem~3.10]{Appell}), we do not require that $\Omega$ has
finite (or $\sigma$-finite) measure, and moreover, we treat the space of
vector functions:

\begin{corollary}[Special Case of Superposition Operators]\label{c:Lip}
Let $X$ and $Y$ be locally regular preideal spaces with the same real form,
e.g.\ $X=L_p(\Omega,E_1)$ and $Y=L_p(\Omega,E_2)$ with $1\le p<\infty$
and $\Omega$ containing no atoms of finite measure.
If $F(x)(t)=f(t,x(t))$ is such that $F$ acts from $U\subseteq X$ into $Y$
and satisfies a local Lipschitz condition near some interior point $x_0$ of
$U$ with constant $L$, then $F$ satisfies the global pointwise
Lipschitz condition
\[\Abs{F(y_1)(t)-F(y_2)(t)}\le L\Abs{y_1(t)-y_2(t)}\quad\text{for almost all
$t\in\Omega$}\]
for every $y_1,y_2\in U$ with the same constant $L$.
\end{corollary}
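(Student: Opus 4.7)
The corollary is the special case of Theorem~\ref{t:fullLip} in which the abstract two-variable map $G$ is independent of its second argument, so the plan is to set up that instantiation and read off the resulting Lipschitz constant.

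First, I would invoke Proposition~\ref{p:abstract} with $k_1\equiv0$ and $k\equiv0$ to exhibit $F(x)(t)=f(t,x(t))$ as a $G$-abstract fully nonlinear integral operator at the given interior point $x_0$ via
\[
G(x_1,x_2)(t)=f(t,x_1(t))=F(x_1)(t).
\]
That proposition supplies exactly what Theorem~\ref{t:fullLip} needs at the structural level: $G(\parameter,x_0)$ is locally determined, and its maximal domain coincides with the natural domain $U$ of $F$, so that $V=U$ in the notation of Theorem~\ref{t:fullLip}.

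Next I would verify the Lipschitz-type hypotheses. Since $G$ is independent of its second variable, Proposition~\ref{p:Liptrans}\iref{i:lt1} (applied with $G_1=F$) shows that $G$ is locally $(1,0)$-Lipschitz transferrable at $(x_0,x_0)$, so $\tau=1$ and $\ell=0$. For the same reason, $G(y,\parameter)$ is constant, hence trivially Lipschitz in the second variable with $L_2=0$. The corollary's hypothesis that $F$ is locally Lipschitz at $x_0$ with constant $L$ is precisely the remaining input required by Theorem~\ref{t:fullLip} in the case $\tau>0$; if $L=0$ one may first replace $L$ by an arbitrary $\varepsilon>0$ and then let $\varepsilon\to0$ in the conclusion.

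Applying Theorem~\ref{t:fullLip} with $\tau=1$, $\ell=0$, $L_2=0$, the resulting pointwise Lipschitz constant for $G(\parameter,x_0)=F$ is
\[
L_1=\ell+\tau(L+L_2)=L,
\]
which yields the claimed inequality for all $y_1,y_2\in V=U$. For the concrete example $X=L_p(\Omega,E_1)$ and $Y=L_p(\Omega,E_2)$ with $1\le p<\infty$ and $\Omega$ containing no atoms of finite measure, both spaces are preideal with common real form $L_p(\Omega,\R)$ and are locally regular by Proposition~\ref{p:localreg}, so the hypothesis on $(X,Y)$ is satisfied. I do not anticipate any serious obstacle: the entire argument is a bookkeeping instantiation of Theorem~\ref{t:fullLip}, the only mild subtlety being the $L=0$ case, handled by the $\varepsilon$-trick above.
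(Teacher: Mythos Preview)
Your proof is correct and, like the paper, reduces everything to Theorem~\ref{t:fullLip}; the only difference is the choice of parameters $(\tau,\ell)$. You invoke Proposition~\ref{p:Liptrans}\iref{i:lt1} to get $(\tau,\ell)=(1,0)$ and then feed $L_2=0$ and the local Lipschitz constant $L$ of $F$ into the $\tau>0$ branch of Theorem~\ref{t:fullLip}, obtaining $L_1=\ell+\tau(L+L_2)=L$. The paper instead observes that the local Lipschitz hypothesis on $F$ already makes $G$ locally $(0,L)$-Lipschitz transferrable (this is Proposition~\ref{p:Liptrans}\iref{i:ltL} with $G_0=F$ and $L(y_2)\equiv0$, or simply the definition), so it lands in the $\tau=0$ branch of Theorem~\ref{t:fullLip} and reads off $L_1=\ell=L$ directly. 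Both routes are valid and yield the same constant; the paper's choice is marginally cleaner because the $\tau=0$ branch of Theorem~\ref{t:fullLip} does not require $L>0$, so the $\varepsilon$-trick you need for the degenerate case $L=0$ never arises.
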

\begin{proof}
Put $G(x_1,x_2)=F(x_1)$, and note that $G$ is locally
$(0,L)$-Lipschitz transferrable by Proposition~\ref{p:Liptrans}.
Thus, the result follows from Theorem~\ref{t:fullLip}.
\end{proof}

\section{Local Darbo and Local Compactness Condition}

Let $X$ be a normed space. For $r\in[0,\infty]$, we put
$B_r(x_0)=\Curly{x\in X:\Norm{x-x_0}<r}$. For $M\subseteq X$, the
\emph{Hausdorff measure of noncompactness} $\alpha(M)$ of a set
$M\subseteq X$ is defined as the infimum of all $\varepsilon\in[0,\infty]$
such that $M$ has a finite $\varepsilon$-net $N\subseteq X$, that is,
$M\subseteq\bigcup_{x\in N}B_\varepsilon(x)$. Recall that as a consequence of
Riesz's lemma one easily obtains
\[\alpha(B_r(x_0))=\begin{cases}0&\text{if $\dim X<\infty$ and $r<\infty$}\\
r&\text{otherwise.}\end{cases}\]
If $M\subseteq X$ and $Y$ is a normed space, then a map
$F\colon M\to Y$ satisfies the \emph{Darbo} condition
with a constant $\ell\in[0,\infty)$ if
\begin{equation}\label{e:ball}
\alpha(F(K))\le\ell\alpha(K)\quad\text{for all $K\subseteq M$.}
\end{equation}
A Darbo condition with constant $\ell=0$ means that $F(M_0)$ is precompact.
However, many results for compact maps hold also if $\ell$ is only
sufficiently small. For instance, a variant of the
famous fixed point theorem of Darbo states
that if $M_0$ is a nonempty complete bounded convex subset of
$X=Y$ and $F\colon M_0\to M_0$ is continuous and satisfies a
Darbo condition with constant $\ell<1$ then $F$ has a fixed point.

There are many examples of maps which satisfy a Darbo condition with
constant $\ell$. For instance, any Lipschitz map $F\colon X\to Y$ with
constant $\ell$ satisfies a Darbo condition with constant $\ell$. Moreover,
any compact perturbation of such a map has the same property, too.
The latter shows, in particular, that the condition~\eqref{e:ball} does
in general not imply a linear growth condition on $\diam(F(B_r(x_0)))$
with respect to $r$.

We will show in this section that this is different in case of
($G$-abstract) fully nonlinear integral operators.

Note that the special case of~\eqref{e:ball} when one considers only sets $K$
of the form $K=B_r(x_0)$ simplifies (in the interesting
case that $X$ is infinite-dimensional) to
\[\frac{\alpha(F(B_r(x_0)))}r\le\ell\text.\]
Since we are interested in a local Darbo condition, this motivates the
notation
\begin{equation}\label{e:Fest}
[F]_{x_0}=\limsup_{r\to0}\,\frac{\alpha(F(B_r(x_0)))}r\text.
\end{equation}
If $X$ is infinite-dimensional then~\eqref{e:Fest} is
(up to an arbitrary small error) the smallest constant $\ell$
for which one has~\eqref{e:ball} at least for sets of the form
$K=B_r(x_0)$ with small $r>0$.

\begin{proposition}
If $L\colon X\to Y$ is linear then $[L]_{x_0}=\alpha(L(B_1(0)))$ for each
$x_0\in X$.
\end{proposition}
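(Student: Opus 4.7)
The plan is to reduce the identity to two elementary invariance properties of the Hausdorff measure of noncompactness, namely translation invariance $\alpha(M+y)=\alpha(M)$ for $y\in X$ and positive homogeneity $\alpha(\lambda M)=\lambda\alpha(M)$ for $\lambda>0$. Both facts follow immediately from the definition: if $N$ is a finite $\varepsilon$-net for $M$, then $N+y$ is a finite $\varepsilon$-net for $M+y$, and $\lambda N$ is a finite $(\lambda\varepsilon)$-net for $\lambda M$; applying the same observation in reverse gives the opposite inequalities.

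Once these invariances are available, the computation is immediate. First I would rewrite the ball using $B_r(x_0)=x_0+rB_1(0)$. Linearity of $L$ then gives
\[
L(B_r(x_0))=L(x_0)+rL(B_1(0))\text,
\]
and combining translation invariance with positive homogeneity of $\alpha$ yields
\[
\alpha(L(B_r(x_0)))=\alpha\bigl(L(x_0)+rL(B_1(0))\bigr)=r\alpha(L(B_1(0)))
\]
for every $r>0$. Dividing by $r$, the quotient in~\eqref{e:Fest} is the constant $\alpha(L(B_1(0)))$, independent of $r$, so the limit superior as $r\to0$ coincides with this constant, which is the claim.

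There is no substantial obstacle here. The only point that requires a brief comment is that boundedness of $L$ is not assumed: if $L$ is unbounded then $L(B_1(0))$ is an unbounded set, so $\alpha(L(B_1(0)))=\infty$ and both sides of the claimed identity are $\infty$, which is consistent with the scaling identity above in the convention $r\cdot\infty=\infty$ for $r>0$.
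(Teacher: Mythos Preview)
Your proof is correct and follows exactly the same route as the paper's: both invoke translation invariance and positive homogeneity of $\alpha$ to get $\alpha(L(B_r(x_0)))=r\alpha(L(B_1(0)))$, from which the claim is immediate. The only minor slip is that in your statement of translation invariance you write $y\in X$ where you mean $y\in Y$; your added remark on the unbounded case is a harmless extra.
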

\begin{proof}
Trivially, $\alpha(M+y_0)=\alpha(M)$  and $\alpha(rM)=r\alpha(M)$ for every
$M\subseteq Y$, $y_0\in Y$ and $r\in(0,\infty)$. Applying this with
$M=L(B_1(0))$ and $y_0=L(x_0)$, we obtain
$\alpha(L(B_r(x_0)))=r\alpha(LB_1(0))$ which implies the assertion.
\end{proof}

We need one further definition. Let $c\ge1$.
A preideal$^*$ space $Y$ is called \emph{$(\alpha,c)$-nondegenerate}
if for each $y_1,y_2\in Y$ there holds
\begin{equation}\label{e:nondeg}
\alpha(\Curly{P_Dy_1+P_{\Omega\setminus D}y_2:
\text{$D\subseteq\Omega$ measurable}})\ge\frac1{2c}\Norm{y_1-y_2}\text.
\end{equation}
Note that $c=1$ is the smallest constant which can occur
in~\eqref{e:nondeg} (if $y_1\ne y_2$). Indeed, the converse estimate
\[\alpha(\Curly{P_Dy_1+P_{\Omega\setminus D}y_2:
\text{$D\subseteq\Omega$ measurable}})\le\frac12\Norm{y_1-y_2}\]
holds unconditionally, since $z=\frac12(y_1-y_2)$ constitutes in view of
\[\Abs{P_Dy_1(t)+P_{\Omega\setminus D}y_2(t)-z(t)}\le\Abs{z(t)}\]
an $\frac12\Norm{y_1-y_2}$-net for the set on the left-hand side.
In particular, for $(\alpha,1)$-nondegenerate spaces,
there holds equality in~\eqref{e:nondeg}. These spaces are simply called
$\alpha$-nondegenerate in~\cite{Appell}.

While for real-valued functions some convenient sufficient criteria for
$\alpha$-nondegenerate ideal spaces have been given in~\cite{Appell},
it is not immediately clear that these hold also for spaces of
vector functions. In the appendix, we will provide a sufficient conditions
which even for scalar functions extends that from~\cite{Appell}.
As a special case of that result we obtain (cf.\ Theorem~\ref{t:Lphinondeg}):

\begin{proposition}
Suppose that $\Omega$ contains no atoms of finite measure. Let
$1\le p\le\infty$; in case $p=\infty$ suppose in addition that $\Omega$
is $\sigma$-finite. Then $L_p(\Omega,E_2)$ is $(\alpha,1)$-nondegenerate.
\end{proposition}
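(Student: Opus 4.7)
The plan is to reduce by translation invariance of $\alpha$ and then apply Lyapunov's convexity theorem for $1\le p<\infty$ (and a disjoint-support argument for $p=\infty$) to locate, for each candidate finite net, a single measurable set whose image is $\tfrac12\Norm{z}_p$-far from every net-point. First, from $P_Dy_1+P_{\Omega\setminus D}y_2 = y_2 + P_D(y_1-y_2)$ and translation invariance of $\alpha$, it suffices to establish
\[
\alpha(\Curly{P_Dz : D\subseteq\Omega\text{ measurable}}) \ge \tfrac12\Norm z_{L_p}
\]
for every $z\in L_p(\Omega,E_2)$, setting $z := y_1-y_2$.

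For $1\le p<\infty$, given any finite collection $\Curly{w_1,\dots,w_N}\subseteq L_p$, splitting the $L_p$-integral over $D$ and its complement gives
\[
\Norm{P_Dz-w_i}_p^p = \Norm{w_i}_p^p + \int_D f_i\dt,\qquad f_i := \Abs{z-w_i}^p - \Abs{w_i}^p \in L_1(\Omega).
\]
Since $\supp z$ is $\sigma$-finite (automatic for $p<\infty$) and contains no atoms of finite measure, the underlying measure is nonatomic there; Lyapunov's convexity theorem applied to the finite vector measure $D\mapsto\bigl(\int_D\Abs z^p,\,\int_D f_1,\dots,\int_D f_N\bigr)\in\R^{N+1}$ produces a set $D_*$ realising the midpoint between the values at $D=\emptyset$ and $D=\supp z$. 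For this $D_*$,
\[
\Norm{P_{D_*}z-w_i}_p^p = \tfrac12\bigl(\Norm{w_i}_p^p+\Norm{z-w_i}_p^p\bigr) \ge \Bigl(\tfrac{\Norm{w_i}_p+\Norm{z-w_i}_p}{2}\Bigr)^p \ge \bigl(\tfrac12\Norm z_p\bigr)^p,
\]
by convexity of $x\mapsto x^p$ on $[0,\infty)$ and the triangle inequality, which forces every $\varepsilon$-net to satisfy $\varepsilon \ge \tfrac12\Norm z_p$.

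For $p=\infty$ with $\sigma$-finite $\Omega$, I would argue more directly: for each $\eta>0$ the level set $E_\eta:=\Curly{t:\Abs{z(t)}>\Norm z_\infty-\eta}$ has positive measure and, by $\sigma$-finiteness together with the absence of finite atoms, admits a countably infinite measurable partition $\Curly{D_k}_{k\ge1}$ into positive-measure pieces; the family $\Curly{P_{D_k}z}_{k\ge1}$ is then pairwise $\ge\Norm z_\infty-\eta$ apart in $L_\infty$ (since $D_j\triangle D_k\subseteq E_\eta$), so $\alpha\ge\tfrac12(\Norm z_\infty-\eta)$, after which $\eta\to 0$ finishes. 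The principal technical point throughout is the Lyapunov step: because each component of the vector measure above is absolutely continuous with $L_1$-density and vanishes off the nonatomic set $\supp z$, the classical finite-measure form of Lyapunov's theorem applies even though $\Omega$ is only $\sigma$-finite globally.
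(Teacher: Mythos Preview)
Your argument is correct and takes a genuinely different route from the paper. The paper deduces the proposition as the special case $\Phi(t,u)=u^p$ (respectively $\Phi(t,u)=0$ for $u\le1$ and $\infty$ for $u>1$ when $p=\infty$) of Theorem~\ref{t:Lphinondeg} on Orlicz--Musielak spaces: it introduces the auxiliary notion of a $c$-\emph{average-stable} space, shows in Theorem~\ref{t:nondeg} that $c$-average-stability implies $(\alpha,c)$-nondegeneracy by simultaneously approximating $y_1,y_2$ and any finite candidate net by countable step functions on a common partition, then refining and halving each cell, and finally checks $1$-average-stability of $L_\Phi$ via convexity of the Young function. All exponents $1\le p\le\infty$ are thereby treated in a single stroke. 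Your approach is more direct for $L_p$ itself: the translation $P_Dy_1+P_{\Omega\setminus D}y_2=y_2+P_Dz$ followed by Lyapunov's convexity theorem produces the exact midpoint set $D_*$ without any step-function approximation, and the convexity of $u\mapsto u^p$ plus the triangle inequality finish the case $p<\infty$; the case $p=\infty$ is handled by an elementary separated-sequence argument on a level set. What the paper's detour buys is generality---the same machinery covers all Orlicz--Musielak spaces, where the Luxemburg norm is defined only implicitly and a Lyapunov-type shortcut is not available; what your route buys is brevity and transparency in the $L_p$ setting, at the cost of splitting off $p=\infty$ as a separate (easy) case. A minor remark: the component $\int_D\Abs z^p$ in your vector measure is not actually used in the final estimate and could be omitted.
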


Now we are in a position to formulate the
main result concerning the Darbo condition:

\begin{theorem}[Darbo-Degeneration]\label{t:Darbo}
Let $X$ and $Y$ be spaces of measurable functions over $\Omega$, $Y$ being
an $(\alpha,c)$-nondegenerate preideal$^*$ space. Let $U\subseteq X$, and
$F\colon U\to Y$ be a $G$-abstract fully nonlinear integral operator at
$x_0\in U$, say $F(x)=G(x,x)$ with $G(\parameter,x_0)\colon V\to Y$ being
locally determined. Suppose in addition that $G$ is
diagonal-differentiable at $(x_0,x_0)$ with respect to the second variable
with diagonal derivative $D_2G(x_0,x_0)$. Then
\[\limsup_{r\to0}\,\frac{\diam(G(B_r(x_0),x_0))/2}r\le
c([F]_{x_0}+[D_2G(x_0,x_0)]_0)\text.\]
If additionally $X$ and $Y$ are locally regular preideal spaces
with the same real form $X_{\R}=Y_{\R}$ then $G(\parameter,x_0)$
satisfies the global pointwise diameter-growth condition
\begin{equation}\label{e:growth}
\begin{gathered}
\Abs{G(x_1,x_0)(t)-G(x_2,x_0)(t)}\le([F]_{x_0}+[D_2G(x_0,x_0)]_0)\cdot\\
2c\max\Curly{\Abs{x_1(t)-x_0(t)},\Abs{x_2(t)-x_0(t)}}\quad
\text{for almost all $t\in\Omega$}
\end{gathered}
\end{equation}
for every $x_1,x_2\in V$.
\end{theorem}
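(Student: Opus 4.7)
The plan is to follow the two-level strategy already used in the proofs of Theorems~\ref{t:fulldiff} and~\ref{t:fullLip}. The first step (the $\alpha$-analog of Lemma~\ref{l:frechdiff}) is to transfer the hypotheses on $F$ and $D_2G(x_0,x_0)$ into a Darbo estimate for $G_0\coloneqq G(\parameter,x_0)$ alone: writing
\[
G_0(x_0+h)=F(x_0+h)-\bigl(G(x_0+h,x_0+h)-G(x_0+h,x_0)\bigr)
\]
and specializing the diagonal-differentiability limit to $k=h$, one obtains for every $\varepsilon>0$ and all sufficiently small $r>0$ that
\[
G_0(B_r(x_0))\subseteq F(B_r(x_0))-D_2G(x_0,x_0)(B_r(0))+\varepsilon r\cdot\overline{B_1(0)}.
\]
Subadditivity and translation invariance of the Hausdorff measure of noncompactness $\alpha$, combined with $\alpha(D_2G(x_0,x_0)(B_r(0)))=r[D_2G(x_0,x_0)]_0$, then yield after division by $r$ and $\limsup_{r\to0}$ the bound $[G_0]_{x_0}\le[F]_{x_0}+[D_2G(x_0,x_0)]_0$.

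The second step is to convert this $\alpha$-estimate into the claimed diameter estimate via the $(\alpha,c)$-nondegeneracy of $Y$. For $x_1,x_2$ near $x_0$ (in the domain of $G_0$) put $y_i\coloneqq G_0(x_i)$; since $Y$ is preideal$^*$, the mixings $y_D\coloneqq P_Dy_1+P_{\Omega\setminus D}y_2$ all lie in $Y$. The maximal domain and local determination of $G_0$, applied exactly as in the proof of Proposition~\ref{p:abstract}, show that each $y_D$ equals $G_0(x_D)$ with $x_D\coloneqq P_Dx_1+P_{\Omega\setminus D}x_2$ lying in the domain of $G_0$ in a ball $B_{r'}(x_0)$ whose radius $r'$ is controlled by $\Norm{x_1-x_0}_X$ and $\Norm{x_2-x_0}_X$. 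The $(\alpha,c)$-nondegeneracy of $Y$ then gives
\[
\tfrac{1}{2c}\Norm{y_1-y_2}_Y\le\alpha(\Curly{y_D:D\subseteq\Omega\text{ measurable}})\le\alpha(G_0(B_{r'}(x_0))),
\]
and taking $\sup$ over admissible $x_1,x_2$, dividing by $r$ and combining with Step~1 delivers the first assertion.

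For the second (pointwise) assertion I would argue by contradiction in the spirit of the proof of Theorem~\ref{t:fullLip}. Suppose there exist $x_1,x_2\in V$, $C'>[F]_{x_0}+[D_2G(x_0,x_0)]_0$, and a set $T\subseteq\Omega$ of positive measure on which $\Abs{G_0(x_1)(t)-G_0(x_2)(t)}>2cC'M(t)$, where $M\coloneqq\max\Curly{\Abs{x_1-x_0},\Abs{x_2-x_0}}$. Local determination of $G_0$ reduces $T$ to $T\subseteq\supp M$, because on $\Curly{M=0}$ we have $x_1=x_2=x_0$ and hence $G_0(x_1)=G_0(x_2)$ almost everywhere. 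Local regularity of $X$ applied to $M\in X_{\R}$ yields $D\subseteq T$ of positive measure with $\Norm{P_DM}_{X_{\R}}$ arbitrarily small; setting $\tilde x_i\coloneqq x_0+P_D(x_i-x_0)$, local determination gives $G_0(\tilde x_1)-G_0(\tilde x_2)=P_D(G_0(x_1)-G_0(x_2))$. Preideality of $Y$ together with $Y_{\R}=X_{\R}$ promotes the pointwise violation on $D$ to $\Norm{G_0(\tilde x_1)-G_0(\tilde x_2)}_Y\ge2cC'\Norm{P_DM}_{X_{\R}}$, while preideality of $X$ gives $\Norm{\tilde x_i-x_0}_X\le\Norm{P_DM}_{X_{\R}}$, so the first inequality bounds the same quantity above by $(2c([F]_{x_0}+[D_2G(x_0,x_0)]_0)+o(1))\Norm{P_DM}$; dividing by $\Norm{P_DM}$ and letting $\Norm{P_DM}\to0$ forces $C'\le[F]_{x_0}+[D_2G(x_0,x_0)]_0$, contradicting the choice of $C'$.

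The main technical obstacle is Step~2: one has to verify that the mixings $x_D$ actually lie in the maximal domain $V$ (handled in the same spirit as Proposition~\ref{p:abstract}, using the maximal-domain property piecewise on $D$ and on $\Omega\setminus D$) and, crucially, to track precisely how the radius $r'$ of the ball containing the $x_D$ compares to $r$, since this interacts directly with the $(\alpha,c)$-constant and is what accounts for the factor $c$ in the stated bound. Everything else — Step~1, and the contradiction mechanism in Step~3 — is a routine adaptation of the arguments already used for the differentiability and Lipschitz degeneracy theorems.
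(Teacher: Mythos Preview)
Your three-step outline is essentially the paper's proof. Step~1 is exactly Lemma~\ref{l:Darbo} (the paper builds finite $\varepsilon$-nets explicitly rather than using your set-inclusion plus subadditivity of $\alpha$, but the two formulations are interchangeable). Step~2 is the paper's proof of the first assertion verbatim, and your Step~3 contradiction argument is the content of the separately stated ``Global Growth Degeneration for Superposition Operators'' theorem that the paper proves immediately afterwards and then applies to $G_0=G(\parameter,x_0)$ with $B=2c([F]_{x_0}+[D_2G(x_0,x_0)]_0)$; you have simply inlined that argument.

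One correction to what you flag as the main technical obstacle. The theorem does \emph{not} assume maximal domain for $G(\parameter,x_0)$, only local determination, so your plan to invoke the maximal-domain property ``in the spirit of Proposition~\ref{p:abstract}'' is both unnecessary and unfounded. The paper handles the mixings much more directly: diagonal-differentiability forces $(x_0,x_0)$ to be an interior point of the domain of $G$, hence $B_r(x_0)\subseteq V$ for all small $r$, and for $x_1,x_2\in B_r(x_0)$ one simply has $x_D=P_Dx_1+P_{\Omega\setminus D}x_2\in B_r(x_0)$ (this step tacitly uses a preideal-type monotonicity of $\Norm\parameter_X$, since $\Abs{x_D(t)-x_0(t)}\le\max\Curly{\Abs{x_1(t)-x_0(t)},\Abs{x_2(t)-x_0(t)}}$). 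In particular $r'=r$ exactly --- there is no radius to track, and the factor $c$ in the bound comes solely from the $(\alpha,c)$-nondegeneracy inequality, not from any slack between $r$ and $r'$.
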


\begin{corollary}[Compact-Degeneration]\label{c:compact}
Assume that the hypotheses of the first part of
Theorem~\ref{t:Darbo} are satisfied.
If $F|_{B_r(x_0)}$ is compact for some $r>0$ and if $D_2G(x_0,x_0)$ is compact
then $G(\parameter,x_0)$ is Fr\'{e}chet differentiable at $x_0$ with
derivative $0$. If additionally $(X,Y)$ is a weak $V$-pair then
$G(\parameter,x_0)$ is constant.
\end{corollary}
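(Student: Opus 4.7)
The plan is to deduce from the first part of Theorem~\ref{t:Darbo} that $G(\parameter,x_0)$ has Fr\'{e}chet derivative $0$ at $x_0$, then to transfer this to Fr\'{e}chet differentiability of $F$ at $x_0$, so that Theorem~\ref{t:fulldiff} becomes applicable in the second half.

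First, I would argue that both compactness hypotheses force the right-hand side of the estimate in Theorem~\ref{t:Darbo} to vanish: compactness of $F|_{B_r(x_0)}$ yields $\alpha(F(B_\rho(x_0)))=0$ for every $\rho\le r$, hence $[F]_{x_0}=0$; and compactness of $L\coloneqq D_2G(x_0,x_0)$ combined with the proposition preceding Theorem~\ref{t:Darbo} yields $[L]_0=\alpha(L(B_1(0)))=0$. Therefore
\[
\limsup_{r\to0}\,\frac{\diam(G(B_r(x_0),x_0))}{2r}=0.
\]
Since $x_0\in B_r(x_0)$, any $h\in X$ with $\Norm{h}_X<r$ satisfies $\Norm{G(x_0+h,x_0)-G(x_0,x_0)}_Y\le\diam(G(B_r(x_0),x_0))$; choosing $r=2\Norm{h}_X$ and dividing by $\Norm{h}_X$, the quotient tends to $0$ as $h\to0$, so $G(\parameter,x_0)$ is Fr\'{e}chet differentiable at $x_0$ with derivative $0$.

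Second, I would promote this to Fr\'{e}chet differentiability of $F$ at $x_0$ with derivative $L$ by the splitting
\[
F(x_0+h)-F(x_0)=\bigl(G(x_0+h,x_0+h)-G(x_0+h,x_0)\bigr)+\bigl(G(x_0+h,x_0)-G(x_0,x_0)\bigr).
\]
The diagonal-differentiability hypothesis, specialized to $k=h$, shows that the first bracket equals $Lh+o(\Norm{h}_X)$, and the first step provides that the second bracket is $o(\Norm{h}_X)$. Under the additional assumption that $(X,Y)$ is a weak $V$-pair, Theorem~\ref{t:fulldiff} now applies to $F$, giving $U=X$ and a bounded-affine form $G(x,x_0)=y_0+Ax$ with $A\in\Lc(X,Y)$. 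Since $A$ must coincide with the Fr\'{e}chet derivative of $G(\parameter,x_0)$ at $x_0$, which was shown to be $0$, we conclude $A=0$ and $G(\parameter,x_0)\equiv y_0$ is constant.

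The only non-routine point is the first step, where a mere diameter-growth statement has to be converted into a genuine Fr\'{e}chet-differentiability statement with derivative $0$; but because $x_0$ itself lies in $B_r(x_0)$, the norm $\Norm{G(x_0+h,x_0)-G(x_0,x_0)}_Y$ is trivially controlled by the diameter, so this step reduces to a rescaling. Beyond this, the proof is pure bookkeeping combining Theorem~\ref{t:Darbo}, the diagonal-differentiability hypothesis, and Theorem~\ref{t:fulldiff}, and no serious obstacle is anticipated.
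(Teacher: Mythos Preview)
Your argument is correct, but it reaches the second conclusion by a route different from the paper's. You first upgrade the derivative-$0$ statement for $G(\parameter,x_0)$ to Fr\'{e}chet differentiability of $F$ itself at $x_0$ (via the splitting $F(x_0+h)-F(x_0)=\bigl(G(x_0+h,x_0+h)-G(x_0+h,x_0)\bigr)+\bigl(G(x_0+h,x_0)-G(x_0,x_0)\bigr)$ together with diagonal-differentiability), and then invoke Theorem~\ref{t:fulldiff} directly on the original pair $(F,G)$. The paper instead bypasses differentiability of $F$: once $G_0=G(\parameter,x_0)$ is known to be Fr\'{e}chet differentiable at $x_0$ with derivative $0$, it applies Theorem~\ref{t:fulldiff} to the \emph{auxiliary} pair $\widetilde F=G_0$, $\widetilde G(x,y)=G_0(x)$, the diagonal-differentiability of $\widetilde G$ being trivial by Proposition~\ref{p:diagonal}\iref{i:independent}. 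Your approach has the side benefit of exhibiting $DF(x_0)=D_2G(x_0,x_0)$ explicitly (essentially a converse to Lemma~\ref{l:frechdiff}); the paper's approach is slightly shorter, since it skips that intermediate computation and feeds $G_0$ straight back into the differentiability-degeneration theorem.
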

\begin{proof}[Proof of Corollary~\ref{c:compact}]
Theorem~\ref{t:Darbo} implies $\diam(G(B_r(x_0),x_0))/r\to0$ as $r\to0$.
Hence, $\Norm{G(x_0+h,x_0)-G(x_0,x_0)}/\Norm h\to0$ as $\Norm h\to0$, and
so $G_0=G(\parameter,x_0)$ is Fr\'{e}chet differentiable at $x_0$ with
derivative $0$. If $(X,Y)$ is a weak $V$-pair, we can apply
Theorem~\ref{t:fulldiff} with $\widetilde G(x,y)=G_0(x)$ in view
of Proposition~\ref{p:diagonal}\iref{i:independent} to obtain that
$G_0$ is bounded affine. Since its derivative vanishes, $G_0$ must be constant.
\end{proof}

Actually, Theorem~\ref{t:Darbo} holds also for any other class of spaces
and operators for which degeneration results for the corresponding operator
$G_0=G(\parameter,x_0)$ are available. In fact, the crucial ingredient for the
proof is the following general result:

\begin{lemma}\label{l:Darbo}
Let $X$ and $Y$ be normed spaces, $M\subseteq X\times X$, and let
$G\colon M\to Y$, and let $F(x)\coloneqq G(x,x)$.
If $G$ is diagonal-differentiable with respect to the second variable at some
$(x_0,x_0)$ with diagonal derivative $L=D_2G(x_0,x_0)$, then
\[\relax[G(\parameter,x_0)]_{x_0}\le[F]_{x_0}+\alpha(L(B_1(0)))\text.\]
\end{lemma}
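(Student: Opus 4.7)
My plan is to write $G_0 \coloneqq G(\parameter, x_0)$ as $F$ minus a linear piece minus a small remainder, and then exploit subadditivity and homogeneity of the Hausdorff measure of noncompactness $\alpha$.

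\textbf{Step 1 (decomposition via diagonal-differentiability).} For $h\in X$ with $\Norm h$ small, write $x=x_0+h$ and observe
\[G_0(x_0+h)=F(x_0+h)-\bigl(G(x_0+h,x_0+h)-G(x_0+h,x_0)\bigr).\]
I would apply the definition of diagonal-differentiability with the special choice $k=h$ (which satisfies $\Norm k\le\Norm h$). For every $\varepsilon>0$ this supplies a $\delta>0$ such that for all $\Norm h<\delta$ the remainder $\rho(h)\coloneqq G(x_0+h,x_0+h)-G(x_0+h,x_0)-Lh$ obeys $\Norm{\rho(h)}\le\varepsilon\Norm h$. Hence
\[G_0(x_0+h)=F(x_0+h)-Lh-\rho(h),\qquad\Norm h<\delta.\]

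\textbf{Step 2 (applying $\alpha$).} For $0<r<\delta$, the identity above yields the set inclusion
\[G_0(B_r(x_0))\subseteq F(B_r(x_0))+\bigl(-L(B_r(0))\bigr)+\bigl(-R_r\bigr),\]
where $R_r\coloneqq\Curly{\rho(h):\Norm h<r}\subseteq B_{\varepsilon r}(0)$. Using that $\alpha$ is subadditive on Minkowski sums, invariant under negation, positively homogeneous, and bounded by the diameter, I get
\[\alpha(G_0(B_r(x_0)))\le\alpha(F(B_r(x_0)))+r\,\alpha(L(B_1(0)))+\varepsilon r.\]

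\textbf{Step 3 (passing to the limit).} Dividing by $r$ and taking $\limsup_{r\to0}$ gives
\[[G_0]_{x_0}\le[F]_{x_0}+\alpha(L(B_1(0)))+\varepsilon,\]
and since $\varepsilon>0$ was arbitrary, the claimed inequality follows.

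\textbf{Anticipated obstacle.} The only place requiring care is the uniformity in Step~1: the estimate $\Norm{\rho(h)}\le\varepsilon\Norm h$ must hold simultaneously for \emph{every} $h$ with $\Norm h<\delta$. This is exactly what the stronger ``diagonal'' form of the limit in Definition of diagonal-differentiability provides (in contrast to the partial Fr\'echet derivative, which would only control $G(x_0,x_0+h)-G(x_0,x_0)-Lh$). The rest is routine bookkeeping with $\alpha$; no $V$-pair hypothesis or ideal-space structure is needed for this lemma.
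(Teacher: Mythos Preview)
Your proof is correct and follows essentially the same route as the paper: both rest on the identity $G_0(x_0+h)=F(x_0+h)-Lh-\rho(h)$ with the remainder controlled by diagonal-differentiability (taking $k=h$), and then estimate $\alpha(G_0(B_r(x_0)))$ by the three pieces. The only cosmetic difference is packaging: you invoke the standard properties of $\alpha$ (subadditivity on Minkowski sums, positive homogeneity, $\alpha$ of a set contained in a ball of radius $\varepsilon r$ is at most $\varepsilon r$) directly, whereas the paper unfolds these by hand, fixing $L_F>[F]_{x_0}$ and $L_G>\alpha(L(B_1(0)))$, choosing finite $rL_F$- and $rL_G$-nets $N_F$, $N_G$, and verifying that $N_F-N_G$ is a finite $r(L_F+L_G+\varepsilon)$-net for $G_0(B_r(x_0))$. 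Your abstract formulation is a bit cleaner; the paper's explicit-net version avoids quoting any properties of $\alpha$ beyond its definition.
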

\begin{proof}
Let $L_F>[F]_{x_0}$, $L_G>[L]_0=\alpha(L(B_1(0)))$ and $\varepsilon>0$.
Then for all sufficiently small $r>0$ the following holds: There is a finite
$rL_F$-net $N_F\subseteq Y$ for $F(B_r(x_0))$, there is a finite
$rL_G$-net $N_G\subseteq Y$ for $L(B_r(0))$, and
\[\Norm{G(x_0+h,x_0+h)-G(x_0+h,x_0)-Lh}\le\varepsilon\Norm h
\quad\text{if $\Norm h\le r$.}\]
For $x\in B_r(x_0)$ put $h\coloneqq x-x_0$. There are
$x_F\in N_F$ and $x_G\in N_G$ with $\Norm{F(x)-x_F}\le rL_F$ and
$\Norm{Lh-x_G}\le rL_G$, and so
\begin{gather*}
\Norm{x_F-x_G-G(x,x_0)}=\\
\Norm{(x_F-F(x))+(G(x,x)-G(x,x_0)-Lh)+(Lh-x_G)}
<rL_F+\varepsilon r+rL_G\text.
\end{gather*}
Hence, $N_F-N_G$ constitutes a finite $r(L_F+L_G+\varepsilon)$-net for
$G(B_r(x_0),x_0)$, and so $[G(\parameter,x_0)]_{x_0}\le L_F+L_G+\varepsilon$.
\end{proof}

\begin{proof}[Proof of the first part of Theorem~\ref{t:Darbo}]
Let $C>[F]_{x_0}+[D_2G(x_0,x_0)]_0$. By Lemma~\ref{l:Darbo}, we have
$C>[G_0]_{x_0}$ with $G_0=G(\parameter,x_0)$. Hence, there is $\delta>0$ such
that for each $r\in(0,\delta)$ the estimate $\alpha(G_0(B_r(x_0)))\le Cr$
holds. Now if $x_1,x_2\in B_r(x_0)$ are arbitrary and $D\subseteq\Omega$ is
measurable then also $y=P_Dx_1+P_{\Omega\setminus D}x_2\in B_r(x_0)$ and,
since $G_0$ is locally determined,
\[G_0(y)=P_DG_0(x_1)+P_{\Omega\setminus D}G_0(x_2)\text.\]
Consequently,
\[G_0(B_r(x_0))\supseteq M_{x_1,x_2}=
\Curly{P_DG_0(x_1)+P_{\Omega\setminus D}G_0(x_2):
\text{$D\subseteq\Omega$ measurable}}\text.\]
Since $Y$ is $(\alpha,c)$-nondegenerate, we obtain
\[Cr\ge\alpha(G_0(B_r(x_0)))\ge\alpha(M_{x_1,x_2})\ge
\frac1{2c}\Norm{G_0(x_1)-G_0(x_2)}\text.\]
Since $x_1,x_2\in B_r(x_0)$ are arbitrary, we have shown that
$\diam(G_0(B_r(x_0)))\le2cCr$ for all $r\in(0,\delta)$, and the
assertion follows.
\end{proof}

The first part of Theorem~\ref{t:Darbo} implies the second part in view of
the following global growth result for locally determined operators:

\begin{theorem}[Global Growth Degeneration for Superposition Operators]
Let $X$ and $Y$ be locally regular preideal spaces with the same real form
$X_{\R}=Y_{\R}$. Let $x_0$ be an interior point of $U\subseteq X$,
and $F\colon U\to Y$ be locally determined. If
\[B=\limsup_{r\to0}\,\frac{\diam(F(B_r(x_0)))}r\]
is finite then $F$ satisfies the global pointwise diameter-growth condition
\[\Abs{F(x)(t)-F(y)(t)}\le B\max\Curly{\Abs{x(t)-x_0(t)},\Abs{y(t)-y_0(t)}}
\quad\text{for almost all $t\in\Omega$}\]
for every $x,y\in U$.
\end{theorem}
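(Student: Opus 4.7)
The plan is to mimic the contradiction argument from the proof of Theorem~\ref{t:fullLip}, with the diameter-growth quantity $B$ now playing the role of the Lipschitz constant. Suppose the claimed pointwise bound fails: then there exist $x,y\in U$, a constant $C>B$, and a measurable set $T\subseteq\Omega$ of positive measure on which
\[\Abs{F(x)(t)-F(y)(t)}>C\max\Curly{\Abs{x(t)-x_0(t)},\Abs{y(t)-x_0(t)}}\text.\]
Since the left-hand side is positive on $T$ and $F$ is locally determined, we may shrink $T$ so that the function $z(t)\coloneqq\Abs{x(t)-x_0(t)}+\Abs{y(t)-x_0(t)}$ is strictly positive on $T$; in particular $T\subseteq\supp z$, and $z\in X_{\R}$ by the preideal property of $X$.

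For any prescribed $\varepsilon>0$, local regularity of $X$ yields a measurable $D\subseteq T$ of positive measure with $\Norm{P_Dz}_{X_{\R}}<\varepsilon$. Then $x_1\coloneqq x_0+P_D(x-x_0)$ and $x_2\coloneqq x_0+P_D(y-x_0)$ satisfy $\Norm{x_i-x_0}_X\le\Norm{P_Dz}_{X_{\R}}<\varepsilon$, so both lie in $U$ for $\varepsilon$ small enough. Since $F$ is locally determined, $F(x_1)$ agrees with $F(x)$ on $D$ and with $F(x_0)$ on $\Omega\setminus D$, and analogously for $F(x_2)$; subtracting and using that both sides vanish off $D$, we deduce
\[\Abs{F(x_1)(t)-F(x_2)(t)}\ge C\max\Curly{\Abs{x_1(t)-x_0(t)},\Abs{x_2(t)-x_0(t)}}\]
for almost every $t\in\Omega$.

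The preideal property of $Y$ together with $Y_{\R}=X_{\R}$ upgrades this to $\Norm{F(x_1)-F(x_2)}_Y\ge C\rho$, where $\rho\coloneqq\max\Curly{\Norm{x_1-x_0}_X,\Norm{x_2-x_0}_X}$ satisfies $0<\rho\le\varepsilon$. For any $\eta>0$, setting $r\coloneqq(1+\eta)\rho$ places $x_1,x_2$ in $B_r(x_0)$ and yields $\diam(F(B_r(x_0)))/r\ge C/(1+\eta)$. Letting $\varepsilon\to0$ forces $r\to0$, so the $\limsup$ defining $B$ is at least $C/(1+\eta)$; sending $\eta\to0$ gives $B\ge C$, contradicting $C>B$.

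The main technical obstacle is the passage from the pointwise estimate to the norm inequality in $Y$: this is exactly where $X_{\R}=Y_{\R}$ is essential, as it permits identifying $\Norm{F(x_1)-F(x_2)}_Y$ with $\Norm{\Abs{F(x_1)-F(x_2)}}_{X_{\R}}$ and then dominating $C\max\Curly{\Abs{x_1-x_0},\Abs{x_2-x_0}}$ in the same $X_{\R}$-norm via the preideal ordering. A secondary subtlety, needed to guarantee $\rho>0$, is the preliminary reduction to $T\subseteq\supp z$, which ensures $P_Dz\ne0$ for every positive-measure $D\subseteq T$ produced by local regularity.
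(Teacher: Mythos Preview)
Your argument is correct and follows essentially the same route as the paper's proof: a contradiction via local regularity, restricting to a small set $D\subseteq T$, producing test functions that agree with $x_0$ off $D$, and using $X_{\R}=Y_{\R}$ to convert the pointwise estimate to a norm estimate that violates the defining $\limsup$. The only cosmetic differences are that the paper first normalizes to $x_0=0$ and orders the two functions so that $\Abs{x(t)}\ge\Abs{y(t)}$ on $T$ (working with $\supp x$ rather than your $\supp z$), while you keep $x_0$ general and handle the open-ball boundary issue with the $(1+\eta)$ factor; both lead to the same contradiction.
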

\begin{proof}
Without loss of generality, we can assume that $x_0=0$, since the general
case follows by applying this special case with the locally determined
map $F(\parameter-x_0)$ on the domain $U-x_0$.
Hence, if the conclusion would be false, we could find $x,y\in U$,
$C>B$, and a set $T\subseteq\Omega$ of positive measure such that
\[\Abs{F(x)(t)-F(y)(t)}>C\Abs{x(t)}\ge C\Abs{y(t)}\]
holds for almost all $t\in T$. Since $F$ is locally determined and the
left-hand side is positive, we can assume (shrinking $T$ if necessary)
that $x(t)\ne y(t)$ for all $t\in T$, since $\Abs{x(t)}\ge\Abs{y(t)}$
in particular $x(t)\ne0$ for almost all $t\in T$, hence $T\subseteq\supp x$.

By hypothesis, there is $r_0>0$ such that
$\diam(F(B_r(0)))<Cr$ for all $r\in(0,r_0)$. Since $X$ is locally regular,
we find a set $D\subseteq T$ of positive measure such that $x_1\coloneqq P_Dx$
satisfies $r\coloneqq\Norm{x_1}<r_0$. Note that indeed $r>0$, because
$D\subseteq\supp x$ has positive meausre. Moreover, since $x_2\coloneqq P_Dy$
satisfies $\Abs{x_2(t)}\le\Abs{x_1(t)}$, we have also $\Norm{x_2}<r$.
Since $F$ is locally determined, we find
\[\Abs{F(x_1)(t)-F(x_2)(t)}\ge\Abs{P_D(F(x_1)(t)-F(x_2)(t))}\ge
C\Abs{x_1(t)}\text.\]
Since $X$ and $Y$ have the same real form, we obtain
\[\diam(F(B_r(0)))\ge\Norm{F(x_1)-F(x_2)}\ge C\Norm{x_1}=Cr\text,\]
which is a contradiction.
\end{proof}

The hypotheses of Corollary~\ref{c:compact} can easily be verified
by the methods already used in Section~\ref{s:diff}. In particular,
we obtain for the operator arising in continuous limits of coupled
Kuramoto systems:

\begin{example}\label{x:compact}
Let $\Omega$ be $\sigma$-finite. For $1<p\le q<\infty$, let
\[F(x)(t)=\int_\Omega k(t,s)\sin\bigl(x(t)-x(s)\bigr)\ds\text,\]
where $k\colon\Omega\times\Omega\to\R$ is measurable and satisfies the estimate
$\Abs{k(t,s)}\le a(t)b(s)$ with some $a\in L_q(\Omega)$,
$b\in L_{p/(p-1)}(\Omega)$.
If $k\ne0$ then the restriction of $F\colon L_p(\Omega)\to L_q(\Omega)$ to
every nonempty open set fails to be compact.

Indeed, we have $F(x)=G(x,x)$ with
\[G(x_1,x_2)(t)=\int_\Omega k(t,s)\sin\bigl(x_1(t)-x_2(s)\bigr)\ds\text.\]
On the one hand, Theorem~\ref{t:fulldiffx} and
Proposition~\ref{p:diagonal}\iref{i:contderiv} imply that
$G$ is diagonal-differentiable at every $(x_1,x_2)$ with diagonal derivative
\[D_2G(x_1,x_2)h(t)=-\int_\Omega k(t,s)\cos\bigl(x_1(t)-x_2(s)\bigr)h(s)\ds
\text,\]
and since $D_2G(x_1,x_2)$ is a compact operator from $X\coloneqq L_p(\Omega)$
into $Y\coloneqq L_q(\Omega)$ for every $(x_1,x_2)\in X\times X$
(see e.g.~\cite[Corollary~9.19]{VaethVolt}), we obtain from
Corollary~\ref{c:compact}: If $F|_{B_r(x_0)}$ is compact for some $r>0$
and some $x_0\in X$ then $G(\parameter,x_0)$ is constant.
On the other hand, if $k\ne0$ then $G(\parameter,x_0)$ is non-constant
for any $x_0\in L_p(\Omega)$.
\end{example}

In case $p=q$, Example~\ref{x:compact} can actually be made quantitative for
particular $k$. Indeed, since $\sin$ growth near $0$ almost like the identity,
one can (depending on $k$) obtain for every $x_0\in X$ some $C>0$ and a
function $h\in X$ such that
\[\Abs{G(x_0+h,x_0(t))(t)-G(x_0,x_0)(t)}>C\Abs{h(t)}\]
for all $t$ on a set $T\subseteq\Omega$ of positive measure.
Then for every such $C$, one has the lower Darbo type estimate
$[F]_{x_0}>C/2$ by Theorem~\ref{t:Darbo}, in particular
\[\alpha(F(B_r(x_0)))\ge\frac C2\alpha(B_r(x_0))\]
for all sufficiently small $r>0$.

\appendix

\section{Mean Value Theorem for Vector Functions}

The mean value theorem for vector functions is of course well-known
and usually obtained by Hahn-Banach. We point out that the following proof
does \emph{not} require any form of the axiom of choice.

\begin{lemma}\label{l:mean}
Let $X$ be a normed space, and $\varphi\colon[a,b]\to E$ be continuous.
If $\varphi$ is differentiable in each point of $(a,b)$
and the derivative is bounded by $M$ then
$\Norm{\varphi(b)-\varphi(a)}\le M(b-a)$.
\end{lemma}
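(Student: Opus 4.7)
The plan is to give a classical ``continuity plus supremum'' argument that uses only the order completeness of $\R$ and avoids the Hahn--Banach theorem (hence any form of the axiom of choice). Fix $\varepsilon>0$ and define $f\colon[a,b]\to\R$ by
\[f(t)=\Norm{\varphi(t)-\varphi(a)}-(M+\varepsilon)(t-a)-\varepsilon\text.\]
Then $f$ is continuous on $[a,b]$ and $f(a)=-\varepsilon<0$. The lemma will follow once I establish $f(b)\le0$ for every $\varepsilon>0$ and let $\varepsilon\to0$.

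To this end I would set
\[S=\Curly{t\in[a,b]:\text{$f(s)\le0$ for all $s\in[a,t]$}}\text,\qquad c=\sup S\text.\]
Continuity of $f$ at $a$ together with $f(a)<0$ yields a right-neighborhood of $a$ contained in $S$, so $c>a$; continuity of $f$ everywhere gives $f(c)\le0$ and in fact $c\in S$. The core of the argument is then to rule out $c<b$.

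Assume for contradiction $c<b$. Since $\varphi$ is differentiable at $c$ with $\Norm{\varphi'(c)}\le M$, there is $h_0\in(0,b-c]$ such that for $0<h\le h_0$
\[\Norm{\varphi(c+h)-\varphi(c)}\le\Norm{\varphi'(c)}h+\tfrac\varepsilon2h\le\bigl(M+\tfrac\varepsilon2\bigr)h\text.\]
Combined with $f(c)\le0$, the triangle inequality gives
\[f(c+h)\le\Norm{\varphi(c)-\varphi(a)}+\bigl(M+\tfrac\varepsilon2\bigr)h-(M+\varepsilon)(c+h-a)-\varepsilon\le-\tfrac\varepsilon2h<0\]
for every $h\in(0,h_0]$. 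Because the same estimate at intermediate points in $[c,c+h_0]$ follows from the same chain of inequalities (with $c$ replaced by $c$ and the intermediate $t$ replaced by $c+h'$ for $h'\in[0,h]$), we conclude $[a,c+h_0]\subseteq S$, contradicting $c=\sup S$. Hence $c=b$ and $f(b)\le0$, i.e.\ $\Norm{\varphi(b)-\varphi(a)}\le(M+\varepsilon)(b-a)+\varepsilon$; letting $\varepsilon\to0$ yields the claim.

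The one delicate point, and really the only obstacle, is the precise form of the auxiliary function~$f$: the extra additive ``$-\varepsilon$'' term is what turns the non-strict inequality $f(c)\le0$ into something that can be strictly improved in a full right-neighborhood of~$c$, which is what drives the contradiction to the maximality of~$c$. Without it one would only know $f(c)\le0$ and could not extend $c$ past itself.
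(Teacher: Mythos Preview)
Your argument is correct and follows the same ``supremum of the good set'' approach as the paper; the only cosmetic difference is how the lack of a derivative at the endpoint $a$ is handled---the paper starts its good set at an interior point $a_0\in(a,b)$, whereas you start at $a$ and use the additive $-\varepsilon$ to force $f(a)<0$. One small correction to your closing commentary: the offset's real job is precisely to guarantee $c>a$ (so that $\varphi'(c)$ exists in the contradiction step), not to enable the extension past $c$---your own computation $f(c+h)\le f(c)-\tfrac{\varepsilon}{2}h$ already gives a strict improvement from $f(c)\le0$ alone, with or without the offset.
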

\begin{proof}
Given $N>M$ and $a_0\in(a,b)$, we are to show that
\[I\coloneqq\Curly{t\in[a_0,b]:\Norm{\varphi(t)-\varphi(a_0)}\le N(t-a_0)}\]
contains $b$. Note that $b_0\coloneqq\sup I$ exists (because $a_0\in I$) and
belongs to $I$. If $b_0<b$ then $\Norm{\varphi'(b_0)}<N$ and $b_0\in I$ imply
\[\Norm{\varphi(t)-\varphi(a_0)}\le
\Norm{\varphi(t)-\varphi(b_0)}+\Norm{\varphi(b_0)-\varphi(a_0)}\le
N(t-b_0)+N(b_0-a)=N(t-a)\]
if $t\in(b_0,b]$ is sufficiently close to $b_0$, that is $b_0<t\in I$,
contradicting $b_0=\max I$.
\end{proof}

\section{$(\alpha,c)$-nondegenerate Preideal Spaces}

Throughout this section, let $(E,\Abs\parameter)$ be a Banach space,
and $X$ be a preideal space of measurable functions $x\colon\Omega\to E$.

The arguments in this section follow essentially~\cite[Lemma~2.9]{Appell},
but we cover also the case $E\ne\R$ and $\mes\Omega=\infty$ and relax the
main hypothesis so that in contrast to~\cite{Appell}, we can treat
Orlicz-Musielak spaces (that is, with $t$-dependent Young functions)
with the same result.

The following result is perhaps not so obvious in case $E\ne\R$:

\begin{lemma}\label{l:partition}
Suppose that each $x\in X$ has $\sigma$-finite support.
Then the set of all functions $y\in X$ of the form
\[y(t)=\sum_{n=1}^\infty u_n\charac{D_n}(t)\]
with $u_n\in E$ and pairwise disjoint measurable $D_n\subseteq E$
is dense in $X$.
\end{lemma}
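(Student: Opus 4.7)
Given $x\in X$ (with $\sigma$-finite support) and $\varepsilon>0$, the plan is to produce a function $y$ of the stated form satisfying the pointwise bound $\Abs{y(t)-x(t)}\le\varepsilon\Abs{x(t)}$ almost everywhere. Once this is achieved, the preideal inequality immediately gives $\Norm{y-x}_X\le\varepsilon\Norm{x}_X$ (density, since $\varepsilon$ is arbitrary) as well as $\Abs y\le(1+\varepsilon)\Abs x$ pointwise, hence $y\in X$. The key input is that strong (Bochner) measurability of $x$ forces its essential range to be separable; this is what will let me select the constants $u_n$ from a fixed countable dense set while controlling the error in the $E$-norm.

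First I would split $\supp x$ dyadically by the size of $x$, setting $A_n\coloneqq\Curly{t\in\Omega:2^{n-1}\le\Abs{x(t)}<2^n}$ for $n\in\mathbb{Z}$. These are pairwise disjoint, measurable, and cover $\supp x$ up to a null set. On each $A_n$ I would partition the annulus $R_n\coloneqq\Curly{u\in E:2^{n-1}\le\Abs{u}<2^n}$ into countably many pairwise disjoint Borel sets $B_{n,j}$ of diameter at most $\varepsilon 2^{n-1}$, obtained by taking open balls of radius $\varepsilon 2^{n-2}$ around a countable dense sequence in $R_n$ and subtracting the preceding ones. Choosing any $u_{n,j}\in B_{n,j}$ and setting $D_{n,j}\coloneqq A_n\cap x^{-1}(B_{n,j})$ then yields pairwise disjoint measurable sets whose union equals $\supp x$ up to a null set; after re-enumerating the countably many nonempty pairs $(n,j)$ as $\mathbb{N}$, I obtain sequences $\Curly{D_k}$ and $\Curly{u_k}$ and define $y(t)\coloneqq\sum_k u_k\charac{D_k}(t)$. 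At each $t$ at most one summand is nonzero, so the sum is well-defined pointwise.

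The pointwise estimate then falls out: for $t\in D_k=D_{n,j}$ one has $t\in A_n$, so $\Abs{y(t)-x(t)}=\Abs{u_{n,j}-x(t)}\le\diam(B_{n,j})\le\varepsilon 2^{n-1}\le\varepsilon\Abs{x(t)}$, and both sides vanish outside $\bigcup_k D_k$. The step where I expect the most care to be needed is verifying that $\bigcup_{n,j}D_{n,j}$ really exhausts $\supp x$ up to a null set, which reduces to the claim that for almost every $t\in A_n$ the value $x(t)$ lands in some $B_{n,j}$; this in turn follows because the family $\Curly{B_{n,j}}_j$ was constructed to cover the (separable) essential image of $x|_{A_n}$. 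The $\sigma$-finiteness of $\supp x$ enters only as a mild technical assumption ensuring that this decomposition behaves well measure-theoretically. Everything else---measurability of the sets $D_{n,j}$, the two preideal inequalities, and the re-enumeration---is then essentially routine.
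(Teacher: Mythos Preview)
Your proposal is correct and takes essentially the same approach as the paper: both slice $\supp x$ by the magnitude $\Abs{x(t)}$, cover the separable essential range (guaranteed by Bochner measurability together with $\sigma$-finite support) with small balls made disjoint by successive subtraction, and deduce the pointwise bound $\Abs{y(t)-x(t)}\le\varepsilon\Abs{x(t)}$, after which the preideal inequality finishes. The only imprecision is the phrase ``a countable dense sequence in $R_n$'' (the full annulus need not be separable), but you already note that what is actually needed and used is density in the separable essential image of $x$, exactly as in the paper.
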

\begin{proof}
Let $x\in X$ and $\varepsilon>0$ be fixed. Since $x$ is measurable and
$\supp x$ is $\sigma$-finite, we can assume,
modifying $x$ on some null set if necessary, that
$x(\Omega)\subseteq E$ is separable, see
e.g.~\cite[Corollary~1.1]{VaethIntegral}. Let $\Curly{u_1,u_2,\dotsc}$
be dense in $x(\Omega)$. Let $I_k$ be a partition
of $(0,\infty)$ into disjoint intervals, $e_k=\inf I_k$, and
$T_k=\Curly{t\in\Omega:\Abs{x(t)}\in I_k}$. We define the Borel sets
\[B_{k,n}\coloneqq B_{\varepsilon e_k}(u_n)\setminus
\bigcup_{m<n}B_{\varepsilon e_k}(u_m)\text.\]
For each fixed $k$, the family $\Curly{B_{k,n}:n}$ is pairwise disjoint, and
its union contains $x(\Omega)$. Hence, the function
\[y(t)=\sum_{k,n=1}^\infty u_n\charac{T_k\cap x^{-1}(B_{k,n})}(t)\]
has the required form. By construction, we have
\[\Abs{x(t)-y(t)}\le\sum_{k=1}^\infty\varepsilon e_k\charac{T_k}(t)\le
\varepsilon\Abs{x(t)}\]
for all $t\in\Omega$, and so $x-y\in X$ (hence $y\in X$), and
$\Norm{x-y}\le\varepsilon\Norm x$.
\end{proof}

Let $c\ge1$. In a generalization of~\cite[Section~2.6]{Appell},
we say that $X$ is \emph{$c$-average-stable} if for each sequence
$D_n\subseteq\Omega$ of pairwise disjoint measurable sets and
each sequences of numbers $a_{j,n}\ge0$ $(j=1,2)$ for which
\begin{equation}\label{e:xaverage}
x(t)=\sum_{n=1}^\infty\frac12(a_{1,n}+a_{2,n})\charac{D_n}(t)
\end{equation}
belongs to the real form $X_{\R}$ of $X$, and for each $\varepsilon>0$,
there is a refinement of the partition $D_n$ (which we denote again by $D_n$)
such that whenever $D_n$ divides into two sets $D_{1,n},D_{2,n}$ of
equal measure, the function
\begin{equation}\label{e:waverage}
w(t)=\sum_{n=1}^\infty\bigl(a_{1,n}\charac{D_{1,n}}(t)+
a_{2,n}\charac{D_{2,n}}(t)\bigr)
\end{equation}
satisfies the estimate $\Norm x_{X_{\R}}\le c\Norm w_{X_{\R}}+\varepsilon$.

Note that we have automatically $w\in X_{\R}$, because $w(t)\le2x(t)$.
If $X_{\R}$ is average-stable in the sense of~\cite[Section~2.6]{Appell},
then $X$ is $1$-average-stable, because, in the notation
of~\cite[Section~2.6]{Appell}, we have $x=P_\omega w$ if $\mes\Omega<\infty$
(as assumed in~\cite{Appell}), and the norm of the second associate space
of $X_{\R}$ coincides with that of $X_{\R}$ if $X$ is almost perfect,
see e.g.~\cite[Corollary~3.4.4]{VaethIdeal}.

Our definition is more technical than that from~\cite{Appell},
but it has the advantage that we can verify that Orlicz-Musielak spaces
are $1$-average stable, as we will see in Theorem~\ref{t:Lphinondeg}.
In contrast, it is unclear whether Orlicz-Musielak spaces are
average-stable in the sense of~\cite{Appell}.

\begin{theorem}\label{t:nondeg}
Suppose that for each $x\in X$ the set $\supp x$ is $\sigma$-finite and
contains no atoms of finite measure. If $X$ is $c$-average-stable then $X$ is
$(\alpha,c)$-nondegenerate.
\end{theorem}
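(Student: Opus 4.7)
The plan is to prove
\[\alpha(\Curly{P_Dy_1+P_{\Omega\setminus D}y_2:D\subseteq\Omega\text{ measurable}})\ge\Norm{y_1-y_2}_X/(2c)\]
by deriving a contradiction from a hypothetical finite $r$-net with $r$ strictly below this value. Writing $u=y_1-y_2$ and using $P_Dy_1+P_{\Omega\setminus D}y_2=y_2+P_Du$, this reduces to showing $\alpha(\Curly{P_Du:D\text{ measurable}})\ge\Norm u_X/(2c)$. Because $\supp u$ is $\sigma$-finite, Lemma~\ref{l:partition} approximates $u$ in $X$ by a simple function $\tilde u=\sum_nu_n\charac{D_n}$ with pairwise disjoint $D_n$; since $X$ is preideal, $\Norm{P_D(u-\tilde u)}_X\le\Norm{u-\tilde u}_X$, so the Hausdorff distance between $\Curly{P_Du:D}$ and $\Curly{P_D\tilde u:D}$ is controlled, and I may reduce to $u=\sum_{n=1}^Nu_n\charac{D_n}$ a finite simple function, up to an arbitrary $\varepsilon$-loss.

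Fixing $\varepsilon>0$, I apply $c$-average-stability to $\Curly{D_n}$ with the data $a_{1,n}=2\Abs{u_n}$, $a_{2,n}=0$; the reference function $x=\sum_n\tfrac12(a_{1,n}+a_{2,n})\charac{D_n}=\Abs u$ belongs to $X_{\R}$ because $X$ is preideal. This yields a refinement (still denoted $\Curly{D_n}$) such that for every equal-measure bisection $D_n=D_{1,n}\sqcup D_{2,n}$, which exists by the no-atoms hypothesis, the function $w=\sum_n2\Abs{u_n}\charac{D_{1,n}}=2\Abs{P_{D^*}u}$ with $D^*=\bigcup_nD_{1,n}$ satisfies $\Norm u_X\le c\Norm w_{X_{\R}}+\varepsilon=2c\Norm{P_{D^*}u}_X+\varepsilon$. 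Hence every such half-union $D^*$, and by symmetry its complement $\bigcup_nD_{2,n}$, satisfies $\Norm{P_{D^*}u}_X\ge(\Norm u_X-\varepsilon)/(2c)$.

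Suppose now $\alpha(\Curly{P_Du:D})<R$ for some $R<(\Norm u_X-\varepsilon)/(2c)$, so a finite $r$-net $\Curly{z_1,\dotsc,z_k}$ exists with $r<R$. Parametrizing half-unions by $\sigma\in\Curly{1,2}^M$ via $D^*(\sigma)=\bigcup_nD_{\sigma_n,n}$, the preideal structure gives $\Norm{P_{D^*(\sigma)}u-P_{D^*(\sigma')}u}_X$ equal to the $X_{\R}$-norm of $\sum_{n:\sigma_n\ne\sigma'_n}\Abs{u_n}\charac{D_n}$; in particular, complementary pairs $(\sigma,\bar\sigma)$ lie at mutual distance $\Norm u_X>2r$, forcing the net to separate them. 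Pigeonholing the $2^M$ half-unions into the $k$ net balls produces, for $M$ large relative to $k$, distinct $\sigma,\sigma'$ with $i(D^*(\sigma))=i(D^*(\sigma'))$, hence $\Norm{P_{E(\sigma,\sigma')}u}_X\le2r$ with $E(\sigma,\sigma')=\bigcup_{n:\sigma_n\ne\sigma'_n}D_n$. A second application of $c$-average-stability with restricted data $a_{1,n}=2\Abs{u_n}\charac{S^c}(n)$, $a_{2,n}=0$, for a suitably chosen XOR-set $S=E(\sigma,\sigma')$, yields a matching lower bound for $\Norm{P_{S^c}u}_X$; combining with $\Norm u_X\le\Norm{P_Su}_X+\Norm{P_{S^c}u}_X$ then contradicts the assumed inequality on $r$.

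The main obstacle I anticipate is the combinatorial extraction in the final step: one must ensure that among the pigeonhole pairs some XOR-set $S$ has $S^c$ also admissible in the sense needed for the second application of $c$-average-stability, and one must arrange a common refinement fine enough that both applications are compatible. Bookkeeping the dependence of the refinement on the finitely many restricted data $\Curly{a_{j,n}^{(S)}}$ is delicate but expected to go through because there are only finitely many relevant subsets $S\subseteq\Curly{1,\dotsc,M}$, so one may replace $\varepsilon$ by $\varepsilon/2^M$ in each application. The factor $2c$ in the nondegeneracy constant comes naturally from the constant $c$ of $c$-average-stability together with the factor $2$ arising from splitting $\Norm u_X$ via the triangle inequality into contributions from $S$ and $S^c$.
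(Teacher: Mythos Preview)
Your initial reduction to the translated set $\Curly{P_Du:D\text{ measurable}}$ with $u=y_1-y_2$, and the passage to simple functions via Lemma~\ref{l:partition}, are both sound. The genuine gap is in how you feed data into the $c$-average-stability hypothesis and in the combinatorial step that follows.

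By choosing $a_{1,n}=2\Abs{u_n}$, $a_{2,n}=0$, your first application yields only that every half-union $D^*=\bigcup_n D_{\sigma_n,n}$ satisfies $\Norm{P_{D^*}u}\ge(\Norm u-\varepsilon)/(2c)$, that is, a lower bound on the distance of each such element to $0$. This says nothing about distances to the net elements $z_1,\dotsc,z_k$. The pigeonhole step then produces some nonempty index set $S$ with $\Norm{P_Eu}\le2r$ for $E=\bigcup_{n\in S}D_n$; but $S$ may be a singleton, and after a fine refinement a single piece $D_n$ typically has arbitrarily small $\Norm{P_{D_n}u}$, so this is no contradiction. Your proposed second application of average-stability to $P_{E^c}u$ only gives lower bounds on half-unions \emph{of} $E^c$, not an upper bound on $\Norm{P_{E^c}u}$; combining with $\Norm u\le\Norm{P_Eu}+\Norm{P_{E^c}u}$ merely yields $\Norm{P_{E^c}u}\ge\Norm u-2r$, which points the wrong way for a contradiction with $r<(\Norm u-\varepsilon)/(2c)$. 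The bookkeeping you sketch over all $2^M$ subsets does not rescue this: the pigeonhole conclusion is inherently too weak because the net elements never enter the average-stability input.

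The paper's proof avoids this by building the average-stability data from the net elements themselves. After approximating $y_1,y_2$ \emph{and} each $z_k$ by simple functions on a common partition $(D_n)$ via Lemma~\ref{l:partition}, one applies $c$-average-stability, for each $k$, to $a_{j,k,n}=\Abs{u_{j,n}-v_{k,n}}$; the reference function $x_k=\tfrac12(\Abs{y_1-z_k}+\Abs{y_2-z_k})$ dominates $\tfrac12\Abs{y_1-y_2}$. After a common refinement and a single bisection one constructs one specific $y\in R(y_1,y_2)$ for which the associated $w_k$ equals $\Abs{y-z_k}$ pointwise. For the index $k$ with $\Norm{y-z_k}\le\alpha_0$ this gives $\Norm{y_1-y_2}\le2\Norm{x_k}\le2c\Norm{w_k}+2\varepsilon\le2c\alpha_0+2\varepsilon$. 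The decisive idea missing from your attempt is that the $z_k$ must be fed into the $(a_{1,n},a_{2,n})$ data, not merely invoked in a terminal pigeonhole.
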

\begin{proof}
For $y_1,y_2\in X$, let $R(y_1,y_2)$ denote the set occuring
in~\eqref{e:nondeg}. Let $\alpha_0>\alpha(R(y_1,y_2))$ and $\varepsilon>0$
be arbitrary. There is a finite $\alpha_0$-net for $R(y_1,y_2)$, consisting
of functions $z_1,\dotsc,z_m\in X$.
In view of Lemma~\ref{l:partition} and by considering a common refinement
(see e.g.~\cite[Lemma~1.1]{VaethIdeal}), we can assume without loss of
generality that there are disjoint measurable sets $D_n\subseteq\Omega$
such that
\[y_j=\sum_{n=1}^\infty u_{j,n}\charac{D_n}\text,\quad
z_k=\sum_{n=1}^\infty v_{k,n}\charac{D_n}\]
for $j=1,2$ and $k=1,\dotsc,m$. Since we can assume that $\bigcup D_n$ is
$\sigma$-finite, we can also assume, refining the partition if necessary,
that each $D_n$ has finite measure. We consider now the family of functions
\[x_k(t)=\frac12\bigl(\Abs{y_1(t)-z_k(t)}+\Abs{y_2-z_k(t)}\bigr)\]
from $X_{\R}$. Note that
\[x_k(t)=\sum_{n=1}^\infty\frac12(a_{1,k,n}+a_{2,k,n})\charac{D_n}(t)\quad
(k=1,\dotsc,m)\]
with $a_{j,k,n}=\Abs{u_{j,n}-v_{k,n}}$. After possibly passing to a further
common refinement of the family $D_n$, the hypothesis thus implies that
we can divide $D_n$ into two disjoint measurable sets $D_{j,n}$ $(j=1,2)$
of equal measure, and the function
\[w_k(t)=\sum_{n=1}^\infty\bigl(a_{1,k,n}\charac{D_{1,n}}(t)+
a_{2,k,n}\charac{D_{2,n}}(t)\bigr)\]
satisfies $\Norm{x_k}\le c\Norm{w_k}+\varepsilon$. We put now
\[y(t)=\sum_{n=1}^\infty\bigl(u_{1,n}\charac{D_{1,n}}(t)+
u_{2,n}\charac{D_{2,n}}(t)\bigr)\text.\]
Then $y\in R(y_1,y_2)$, and so there is some $k$ with
$\Norm{y-z_k}\le\alpha_0$. Since $\Abs{w_k(t)}=\Abs{y(t)-z_k(t)}$,
we conclude for this $k$ that $\Norm{w_k}\le\alpha_0$, and we thus obtain
in view of
\[\Abs{y_1(t)-y_2(t)}\le\Abs{y_1(t)-z_k(t)}+\Abs{z_k(t)-y_2(t)}=2x_k(t)\]
that
\[\Norm{y_1-y_2}\le2\Norm{x_k}\le2(c\Norm{w_k}+\varepsilon)\le
2c\alpha_0+2\varepsilon\text.\]
Hence,~\eqref{e:nondeg} follows.
\end{proof}

Let $\Phi\colon\Omega\times[0,\infty)\to[0,\infty]$ be a generalized
Young-function, that is, for almost all $t\in\Omega$ the function
$\Phi(t,\parameter)$ is convex, not identically $0$,
and $\Phi(t,0)=0$, and $\Phi(\parameter,u)$ is measurable for every $u>0$.
Then $\Phi$ induces the Orlicz-Musielak space $X=L_\Phi(\Omega,E)$
which consists of all measurable functions $x\colon\Omega\to E$ such
that there is $\lambda>0$ such that $x/\lambda$ belongs to the set
\[M_\Phi=\Curly{x:\int_\Omega\Phi(t,\Abs{x(t)})\dt\le1}\text.\]
The infimum of all those $\lambda$ is the \emph{Luxemburg} norm $\Norm x$.
Since $M_\Phi$ is convex with $0\in M_\Phi=-M_\Phi$ and the Luxemburg norm
is the corresponding Minkowski functional, it follows that $X$
is indeed normed (and a preideal space) by the Luxemburg norm.
Note that the spaces $L_{p(\parameter)}(\Omega,E)$ with measurable
$p\colon\Omega\to[1,\infty]$ are special cases of Orlicz-Musielak spaces with
\[\Phi(t,u)=\begin{cases}u^{p(t)}&\text{if $p(t)<\infty$,}\\
0&\text{if $p=\infty$, $u\le1$,}\\
\infty&\text{if $p=\infty$, $u>1$.}\end{cases}\]
In case of constant $p$, these spaces reduce to the classical Bochner-Lebesgue
spaces $L_p(\Omega,E)$ with the usual norm.

\begin{theorem}\label{t:Lphinondeg}
Suppose that $\Curly{t\in\Omega:0\in\Phi(t,(0,\infty))}$ is
$\sigma$-finite, and that $\Omega$ contains no atoms of finite measure.
Then the Orlicz-Musielak space $X=L_\Phi(\Omega,E)$ with the Luxemburg norm
is $1$-average-stable and $(\alpha,1)$-nondegenerate.
\end{theorem}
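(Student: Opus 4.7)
By Theorem~\ref{t:nondeg}, it suffices to establish that $X=L_\Phi(\Omega,E)$ is $1$-average-stable and that every $x\in X$ has $\sigma$-finite support; the $(\alpha,1)$-nondegeneracy then follows immediately. For the latter, the hypothesis supplies a $\sigma$-finite set $\Omega_0\supseteq\Curly{t:\Phi(t,u)=0\text{ for some }u>0}$, and on $\Omega\setminus\Omega_0$ we have $\Phi(t,u)>0$ whenever $u>0$; so for any $\lambda$ with $\int\Phi(t,\Abs x/\lambda)\dt$ finite, the level sets $\Curly{t:\Phi(t,\Abs{x(t)}/\lambda)\ge1/n}$ have finite measure and exhaust $(\supp x)\setminus\Omega_0$.

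For $1$-average-stability, let $x=\sum_n\tfrac{1}{2}(a_{1,n}+a_{2,n})\charac{D_n}\in X_{\R}$ and $\varepsilon>0$ be given; the case $\Norm x_{X_{\R}}\le\varepsilon$ is trivial, so assume otherwise and set $\mu\coloneqq\Norm x_{X_{\R}}-\varepsilon/2>0$. By the defining property of the Luxemburg norm, the modular $I\coloneqq\int_\Omega\Phi(t,\Abs{x(t)}/\mu)\dt$ strictly exceeds $1$. The plan is to refine $\Curly{D_n}$ so that, for every equal-measure split of the refined partition, $\int\Phi(t,\Abs{w(t)}/\mu)\dt>1$; this forces $\Norm w_{X_{\R}}>\mu$, hence $\Norm x_{X_{\R}}<\Norm w_{X_{\R}}+\varepsilon$.

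The refinement proceeds in three layers. First, use $\sigma$-finiteness of $\supp x$ together with the absence of atoms of finite measure to subdivide each $D_n$ into countably many subpieces of finite measure. Second, for each $n$, assume without loss of generality $a_{1,n}\le a_{2,n}$ and split every subpiece along the measurable dichotomy $\Curly{t:\Phi(t,a_{2,n}/\mu)=\infty}$ versus $\Curly{t:\Phi(t,a_{2,n}/\mu)<\infty}$ into a \emph{bad} and a \emph{good} piece. Third, on each good piece further refine by the level sets of the finite measurable function $g_n(t)\coloneqq\Phi(t,a_{1,n}/\mu)-\Phi(t,a_{2,n}/\mu)$ so that on each resulting piece $D_n^{(k)}$ it oscillates by at most $\eta_{n,k}$, with positive $\eta_{n,k}$ chosen so that $\sum_{n,k}\eta_{n,k}\mes(D_n^{(k)})\le 2(I-1)$ (any finite bound suffices when $I=\infty$).

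Now consider an arbitrary equal-measure split of the refined partition. On a bad piece $D_n^{(k)}$, the half $D_{2,n}^{(k)}$ on which $w\equiv a_{2,n}$ has positive measure and $\Phi(t,a_{2,n}/\mu)=\infty$ there, so $\int\Phi(t,\Abs w/\mu)\dt=\infty$ and the conclusion is trivial. On a good piece, pointwise convexity $\Phi(t,\tfrac{a_{1,n}+a_{2,n}}{2\mu})\le\tfrac{1}{2}[\Phi(t,a_{1,n}/\mu)+\Phi(t,a_{2,n}/\mu)]$ combined with equal-measure bookkeeping reduces the local deficit
\[
\int_{D_n^{(k)}}\Phi(t,\Abs x/\mu)\dt-\Bigl(\int_{D_{1,n}^{(k)}}\Phi(t,a_{1,n}/\mu)\dt+\int_{D_{2,n}^{(k)}}\Phi(t,a_{2,n}/\mu)\dt\Bigr)
\]
algebraically to $\tfrac{1}{2}\bigl[\int_{D_{2,n}^{(k)}}g_n\dt-\int_{D_{1,n}^{(k)}}g_n\dt\bigr]$, which the oscillation bound together with the equal-measure condition caps at $\tfrac{1}{4}\eta_{n,k}\mes(D_n^{(k)})$. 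Summing yields total deficit at most $(I-1)/2$, so $\int\Phi(t,\Abs w/\mu)\dt\ge(I+1)/2>1$. The main technical obstacle is the possibility that $\Phi(t,u)=\infty$ on positive-measure sets (genuine already in Orlicz--Musielak spaces such as $L_{p(\cdot)}$ with essentially unbounded $p$), and the good/bad decomposition in the second refinement layer is designed precisely to sidestep this before the oscillation argument is deployed on the remaining finite pieces.
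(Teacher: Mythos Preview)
Your proof is correct, but the core argument for $1$-average-stability is genuinely different from the paper's.

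The paper discretizes \emph{multiplicatively}: it partitions $[0,\infty]$ into $\{0\}$, $\{\infty\}$, and the geometric intervals $[(1+\varepsilon)^k,(1+\varepsilon)^{k+1})$, and refines the $D_n$ so that on each piece both values $\Phi(t,a_{j,n}/\lambda)$ lie in a single such interval, yielding constants $m_{j,n}$ with $m_{j,n}\le\Phi(t,a_{j,n}/\lambda)\le(1+\varepsilon)m_{j,n}$. Convexity together with $\Phi(t,0)=0$ then gives $\Phi\bigl(t,\tfrac{a_{1,n}+a_{2,n}}{2(1+\varepsilon)\lambda}\bigr)\le\tfrac12(m_{1,n}+m_{2,n})$, and the equal-measure split yields $\int_{D_n}\Phi(t,w/\lambda)\ge\tfrac12(m_{1,n}+m_{2,n})\mes D_n$; summing gives $\int\Phi(t,w/\lambda)\ge\int\Phi(t,x/((1+\varepsilon)\lambda))>1$. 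The $\infty$-case is absorbed uniformly because the interval $\{\infty\}$ is simply one of the discretization cells.

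You instead discretize \emph{additively}: you control the oscillation of the difference $g_n(t)=\Phi(t,a_{1,n}/\mu)-\Phi(t,a_{2,n}/\mu)$ on each piece, which forces you to peel off the ``bad'' set $\{\Phi(t,a_{2,n}/\mu)=\infty\}$ beforehand so that $g_n$ is finite. Your convexity step is only the midpoint inequality, and the $(1+\varepsilon)$-rescaling inside $\Phi$ is replaced by a direct $\ell^1$-budget $\sum\eta_{n,k}\mes D_n^{(k)}\le2(I-1)$ on the oscillations. The paper's route is shorter and treats all value ranges (including $\infty$) in one stroke; your route avoids the rescaling trick and makes the obstruction from $\Phi=\infty$ explicit, at the cost of an extra case distinction and the bookkeeping for the $\eta_{n,k}$. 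Both arrive at $\Norm w_{X_{\R}}>\Norm x_{X_{\R}}-\varepsilon$ after refinement, which is exactly what $1$-average-stability demands.
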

\begin{proof}
Since for every $x\in X$ some positive multiple of $x$ belongs to $M_\Phi$,
the first hypothesis implies that $\supp x$ is $\sigma$-finite.
By Theorem~\ref{t:nondeg}, it thus suffices to show that
$X$ is $1$-average-stable. Consider $x\in X$ as in~\eqref{e:xaverage}.
Since $\supp x$ is $\sigma$-finite, we can assume by refining the
partition $D_n$ if necessary, that each $D_n$ has finite measure.

We are to show that for every $\lambda,\varepsilon>0$ with
$\lambda<\Norm x_{X_{\R}}/(1+\varepsilon)$ there is a
refinement of the partition $D_n$ such that every corresponding function $w$
from~\eqref{e:waverage} satisfies $\Norm w_{X_{\R}}>\lambda$.

To this end, we divide $[0,\infty]$ into the countably many disjoint sets
$\Curly0$, $\Curly\infty$, and
$[(1+\varepsilon)^k,(1+\varepsilon)^{k+1})$ ($k$ integer).
Refining the partition $D_n$ correspondingly, we thus can assume that there
are $m_{j,n}\in[0,\infty]$ (boundary points of the above intervals) with
\begin{equation}\label{e:phiest}
m_{j,n}\le\Phi\Bigl(t,\frac{a_{j,n}}\lambda\Bigr)\le
m_{j,n}(1+\varepsilon)\quad\text{for all $t\in D_n$ $(j=1,2)$.}
\end{equation}
Since $\Phi(t,\parameter)$ is convex, we thus find for $t\in D_n$ that
\[\Phi\Bigl(t,\frac{a_{1,n}+a_{2,n}}{2\lambda}\Bigr)\le
\frac12\Bigl(\Phi\Bigl(t,\frac{a_{1,n}}\lambda\Bigr)+
\Phi\Bigl(t,\frac{a_{1,n}}\lambda\Bigr)\Bigr)\le
\frac12(m_{1,n}+m_{2,n})(1+\varepsilon)\text,\]
which implies by the convexity of $\Phi(t,\parameter)$ and $\Phi(t,0)=0$ that
\begin{equation}\label{e:xestlow}
\Phi\Bigl(t,\frac{a_{1,n}+a_{2,n}}{2(1+\varepsilon)\lambda}\Bigr)\le
\frac1{1+\varepsilon}\Phi\Bigl(t,\frac{a_{1,n}+a_{2,n}}{2\lambda}\Bigr)\le
\frac12(m_{1,n}+m_{2,n})\text.
\end{equation}
On the other hand, since $\mes D_{j,n}=\frac12\mes D_n$ for $j=1,2$, we
obtain from~\eqref{e:phiest} that
\[\int_{D_n}\Phi\Bigl(t,\frac{w(t)}\lambda\Bigr)\dt\ge
\int_{D_n}\frac12(m_{1,n}+m_{2,n})\dt\text.\]
Using~\eqref{e:xestlow} and summing up over all $n$, we conclude
\[\int_\Omega\Phi\Bigl(t,\frac{w(t)}\lambda\Bigr)\dt\ge
\int_\Omega\Phi\Bigl(t,\frac{x(t)}{(1+\varepsilon)\lambda}\Bigr)\dt>1\text.\]
The last inequality holds, because $(1+\varepsilon)\lambda<\Norm x_{X_{\R}}$.
Hence, $\lambda<\Norm w_{X_{\R}}$, as claimed.
\end{proof}

We point out, that the novelty of our approach for Theorem~\ref{t:Lphinondeg}
is the case $E\ne\R$, and moreover, we can prove that $X=L_\Phi(\Omega)$ is
$1$-avarage-stable while it is unclear whether one can prove that
$X$ is average-stable in the sense of~\cite{Appell} (unless $\Phi$ is
independent of $t$).

\end{document}